\DeclareMathAlphabet{\mathpzc}{OT1}{pzc}{m}{it}
\newcommand{\spec}{\mathrm{Spec}}
\newcommand{\rmG}{\mathrm{G}}
\newcommand{\rmf}{\mathrm{f}}
\newcommand{\ii}{\mathrm{i}}
\newcommand{\Id}{\mathrm{Id}}
\newcommand{\rmg}{\mathrm{g}}
\renewcommand{\G}{\mathbf{G}}
\newcommand{\Gm}{\mathbb{G}_m} 
\newcommand{\GL}{\mathrm{GL}}
\newcommand{\PGL}{\mathrm{PGL}}
\newcommand{\SL}{\mathrm{SL}}
\newcommand{\SO}{\mathrm{SO}}
\newcommand{\PSO}{\mathrm{PSO}}
\newcommand{\lact}{\curvearrowright}
\newcommand{\SLdZ}{\SL_2(\Zz)}
\newcommand{\Zz}{\mathbb{Z}}
\newcommand{\Zl}{\Zz_\ell}
\newcommand{\Qq}{\mathbb{Q}}
\newcommand{\Qt}{\Qq^\times}
\newcommand{\Qv}{\Qq_v}
\newcommand{\Qp}{\Qq_p}
\newcommand{\Zp}{\Zz_p}
\newcommand{\whZ}{\widehat{\Zz}}
\newcommand{\whO}{\widehat{\mcO}}
\newcommand{\whorder}{\widehat{\ordernumberfield}}
\newcommand{\whZt}{\widehat{\Zz}^\times}
\newcommand{\resprod}{\mathop{{\prod}^{\mathbf{'}}}\limits}
\newcommand{\GAf}{\G(\Af)}
\newcommand{\GA}{\G(\Aa)}
\newcommand{\mfa}{\mathfrak{a}}
\newcommand{\vphi}{\varphi}
\newcommand{\compactopen}{K} 
\newcommand{\compactopenf}{K_{\mathrm{f}}}
\newcommand{\Kf}{K_{\mathrm{f}}}
\newcommand{\multigp}{\mathbb{G}_m} 
\newcommand{\torus}{\mathbf{T}}
\newcommand{\torusK}{\mathbf{T}_{\field}}
\newcommand{\Tt}{\mathbf{T}} 
\newcommand{\res}{\mathrm{res}}
\newcommand{\what}[1]{\widehat{#1}}
\newcommand{\Mat}{\mathrm{Mat}}
\newcommand{\End}{\mathrm{End}}
\newcommand{\Hom}{\mathrm{Hom}}
\newcommand{\Aut}{\mathrm{Aut}}
\newcommand{\quat}{\mathbf{B}}
\newcommand{\quatt}{\mathbf{B}^\times} 
\newcommand{\PBt}{\mathbf{PB}^\times} 
\newcommand{\Norm}{\mathrm{Nr}}
\newcommand{\Nr}{\mathrm{Nr}}
\newcommand{\Tr}{\mathrm{Tr}}
\newcommand{\red}{\mathrm{red}} 
\newcommand{\units}[1]{#1{}^\times}
\newcommand{\punits}[1]{\mathrm{P}#1{}^\times}
\newcommand{\unitsadelef}[1]{\widehat{#1}{}^\times}
\newcommand{\punitsadelef}[1]{\mathrm{P}\widehat{#1}{}^\times}
\newcommand{\places}{\mathcal{V}} 
\newcommand{\fiplaces}{\mathcal{V}_{\mathrm{f}}}
\newcommand{\adele}{\mathbb{A}}
\newcommand{\KA}{\field(\mathbb{A})}
\newcommand{\KAf}{\field(\Af)}
\newcommand{\KtAf}{\field^\times(\Af)}
\newcommand{\Aa}{\mathbb{A}}
\newcommand{\Aat}{\mathbb{A}^\times}
\newcommand{\adelef}{\mathbb{A}_{\mathrm{f}}}
\newcommand{\Af}{\mathbb{A}_{\mathrm{f}}}
\newcommand{\Aft}{\mathbb{A}^\times_{\mathrm{f}}} 
\newcommand{\whOrt}{\widehat{\orderfield}^\times}
\newcommand{\projchar}[1]{#1_{\mathrm{char}}}
\newcommand{\Gres}{\G_{\mathrm{char}}}
\newcommand{\Grescomp}[1]{\G_{{#1},\mathrm{char}}}
\newcommand{\Ell}{\mathrm{Ell}}
\newcommand{\CM}{\Ell^{\mathrm{cm}}} 
\newcommand{\EllC}{\Ell_\infty}
\renewcommand{\SS}{\Ell^{\mathrm{ss}}} 
\newcommand{\mfp}{\mathfrak{p}}
\newcommand{\ovorderp}{\ov\order_{\mfp}}
\newcommand{\field}{\mathbf{K}}
\newcommand{\Kt}{\field^\times}
\newcommand{\fieldt}{\mathbf{K}^\times}
\newcommand{\bfK}{\mathbf{K}}
\newcommand{\integersnumberfield}{\ordernumberfield_{\field}}
\newcommand{\ordernumberfield}{\mathscr{O}}
\newcommand{\order}{\mathscr{O}}
\newcommand{\orderfield}{\mathscr{O}} 
\newcommand{\primeideal}{\mathfrak{p}} 
\newcommand{\Cl}{\mathrm{Cl}} 
\newcommand{\Pic}{\mathrm{Pic}} 
\newcommand{\ringclassfield}[1]{\mathbf{H}_{#1}} 
\newcommand{\bfH}{\mathbf{H}}
\newcommand{\Res}{\mathrm{Res}} 
\newcommand{\disc}{\mathrm{disc}}
\renewcommand{\mod}{\, \mathrm{mod}\,} 
\newcommand{\vol}{\mathrm{vol}}
\newcommand{\av}[1]{\left|#1\right|}
\newcommand{\bfp}{\mathbf{p}}
\newcommand{\Hh}{\mathbb{H}}
\newcommand{\Cc}{\mathbb{C}}
\newcommand{\Ct}{\mathbb{C}^\times}
\newcommand{\Rr}{\mathbb{R}}
\newcommand{\Ff}{\mathbb{F}}
\newcommand{\Fp}{\mathbb{F}_p}
\newcommand{\bash}{\backslash}
\newcommand{\mcO}{\mathcal{O}}
\newcommand{\mcC}{\mathcal{C}}
\newcommand{\mcE}{\mathcal{E}}
\newcommand{\mcL}{\mathcal{L}}
\newcommand{\mcI}{\mathcal{I}}
\newcommand{\mscK}{\mathscr{K}}
\newcommand{\OK}{\ordernumberfield_\field}
\def\map#1#2#3#4{\begin{matrix}#1&\to &#2
\\#3 &\mapsto &#4
\end{matrix}}
\newcommand{\ov}[1]{\overline{#1}}
\definecolor{darkolivegreen}{rgb}{0.33, 0.42, 0.18}
\newcommand{\awnote}[1]{\marginpar{\color{darkolivegreen}\tiny [ALW] #1}}
\newcommand\rquot[2]{
  \mathchoice
  {
    \text{\raise0.5ex\hbox{$#1$}\big/\lower0.5ex\hbox{$#2$}}%
  }
  {
    #1\,/\,#2
  }
  {
    #1\,/\,#2
  }
  {
    #1\,/\,#2
  }
}
\newcommand\lquot[2]{
  \mathchoice
  {
    \text{\lower0.5ex\hbox{$#1$}\big\backslash\raise0.5ex\hbox{$#2$}}%
  }
  {
    #1\,\backslash\,#2
  }
  {
    #1\,\backslash\,#2
  }
  {
    #1\,\backslash\,#2
  }
}
\newcommand\lrquot[3]{
  \mathchoice
  {
    \text{\lower0.5ex\hbox{$#1$}\big\backslash\raise0.5ex\hbox{$#2$\!}\big/
      \lower0.5ex\hbox{\!\!$#3$}}%
  }
  {
    #1\,\backslash\,#2\,/\,#3
  }
  {
    #1\,\backslash\,#2\,/\,#3
  }
  {
    #1\,\backslash\,#2\,/\,#3
  }
}
\newcommand{\ra}{\rightarrow}
\newcommand{\cA}{\mathcal{A}}
\newcommand{\cC}{\mathcal{C}}
\newcommand{\bC}{\mathbb{C}}
\renewcommand{\C}{\mathbb{C}}
\newcommand{\bF}{\mathbb{F}}
\newcommand{\bP}{\mathbb{P}}
\newcommand{\bQ}{\mathbb{Q}}
\newcommand{\Q}{\mathbb{Q}}
\newcommand{\bR}{\mathbb{R}}
\newcommand{\R}{\mathbb{R}}
\newcommand{\bZ}{\mathbb{Z}}
\newcommand{\Z}{\mathbb{Z}}
\theoremstyle{plain}
\newtheorem{theorem}{Theorem}[section]
\newtheorem{proposition}[theorem]{Proposition}
\newtheorem{cor}[theorem]{Corollary}
\newtheorem{lemma}[theorem]{Lemma}
\newtheorem{definition}[theorem]{Definition}
\theoremstyle{remark}
\newtheorem{remark}[theorem]{Remark}
\newtheorem*{claim*}{Claim}
\newtheorem{example}[theorem]{Example}
\newif\ifdraft 
\newif\ifold
\numberwithin{equation}{section}
\numberwithin{figure}{section}
\begin{document}

\title[Simultaneous supersingular reductions]{Simultaneous supersingular reductions of\\ CM elliptic curves}

\begin{abstract}
We study the simultaneous reductions at several supersingular primes of elliptic curves with complex multiplication. 
We show -- under additional congruence assumptions on the CM order -- that the reductions are surjective (and even become equidistributed) on the product of supersingular loci when the discriminant of the order becomes large. 
This variant of the equidistribution theorems of Duke and Cornut-Vatsal is an(other) application of the recent work of Einsiedler and Lindenstrauss on the 
classification of joinings of higher-rank diagonalizable actions.
\end{abstract}

\author{Menny Aka}
\address{M.A. Departement Mathematik\\
ETH Z\"urich\\
R\"amistrasse 101\\
8092 Zurich\\
Switzerland}
\email{mennyaka@math.ethz.ch}

\author{Manuel Luethi}
\address{School of Mathematical Sciences\\
Tel Aviv University\\
Tel Aviv-Yafo\\
Israel}
\email{manuelluthi@mail.tau.ac.il}

\author{Philippe Michel}
\address{EPFL/MATH/TAN, Station 8, CH-1015 Lausanne, Switzerland }
\email{philippe.michel@epfl.ch}

\author{Andreas Wieser}
\address{M.A. Departement Mathematik\\
ETH Z\"urich\\
R\"amistrasse 101\\
8092 Zurich\\
Switzerland}
\email{andreas.wieser@math.ethz.ch}

\thanks{Grants Acknowledgements:
  M.L.~acknowledges the support of the SNF (grants 200021-178958 and P1EZP2\_181725) and of the ISF (grant 1483/16).
  Ph.\ M. is partially supported by a DFG-SNF lead agency program grant 200021L\_175755 and by the SNF grant 200021\_197045. 
  A.W. is partially supported by the SNF grant 200021\_178958.   
 \today
 }

\maketitle
\setcounter{tocdepth}{1}
\tableofcontents

\section{Introduction}

\subsection{Simultaneous reduction of CM elliptic curves}\label{sec:introduction-arithmetic}
Let $\field = \Q(\sqrt{D})$ be an imaginary quadratic number field with ring of integers $\order_{\field}$ and discriminant $D=D_\field<0$.

An elliptic curve over $\C$ is a smooth projective variety in $\bP^2_\C$ defined by an equation of the shape
\begin{align*}
E: y^2 + a_1 xy + a_3 y = x^3 + a_2 x^2 + a_4 x + a_6
\end{align*}
and equipped with the structure of an abelian group via the usual chord-tangent construction and the point $[0,1,0]$ as neutral element. 
We say that $E/\bC$ has complex multiplication (CM) by $\integersnumberfield$ if its ring of endomorphisms is isomorphic to $\integersnumberfield$.
Denote by $\CM_{\order_\field}$ the finite set of $\C$-isomorphism classes of such elliptic curves.

Given $E \in \CM_{\order_\field}$, its $j$-invariant $j(E)$ is an algebraic integer, cf.~\cite[Ch.~II, Thm.~6.1]{silverman-advanced}, and  $\bfK(j(E))\subset \bar{\Q} \subset \C$ is the Hilbert class field $\bfH_\field$ of $\field$, cf.~\cite[Ch.~II, Thm.~4.3]{silverman-advanced}. Consequently, $E$ is defined over $\bfH_\field$.

Given $p$ a fixed odd prime, we also fix throughout an embedding $\overline{\Q} \hookrightarrow \overline{\Q_p}$; in particular this embedding uniquely determines, for any field $\field$ as above, a prime ideal $\primeideal=\primeideal_{\bfH_\field}\subset \order_{\bfH_\field}$ above $p$.

Assume that $p$ is inert in $\field$. The curve $E$ (or rather an $\bfH_\field$-form of it) has good reduction $E\mod \primeideal$ at~$\primeideal$, cf.~\S\ref{sec:supersingularreduction}, and its $j$-invariant is $$j(E\mod \primeideal)=j(E) \mod \primeideal \in \bF_{p^2},$$
and the curve $E\mod \primeideal$ is supersingular, i.e.~its endomorphism ring is a maximal order in a quaternion algebra over~$\bQ$, namely in the unique quaternion algebra  $\quat_{p,\infty}$ ramified at $p$ and $\infty$.

We denote by $\SS_p$ the set of isomorphism classes of supersingular elliptic curves defined over $\overline{\bF_p}$.
This is a finite set of cardinality $\frac{p}{12}+O(1)$ \cite[Ch.~V, Thm.~4.1]{silverman-aec}.
We will be interested in studying the reduction map
\begin{align*}
\mathrm{red}_p: \map{\CM_{\order_\field}}{\SS_p}{E}{E\mod \primeideal}
\end{align*}
for various values of $p$.

By Deuring's lifting theorem \cite{deuring41}  every supersingular curve can be obtained as the reduction of some elliptic curve (with complex multiplication by some order of some imaginary quadratic field). 
A natural question then is whether the CM order can be taken to be maximal and if so, which are the possible orders. 
In \cite{michel04}, the third named author remarked  that by combining the works of Gross, Duke and Iwaniec \cite{Gross,duke88,Iw}, any $E_0 \in \SS_p$ may be lifted to some $E \in \CM_{\order_\field}$ as soon as $p$ is inert in $\field$ and $D$ is sufficiently large (depending on~$p$). 
In other terms, for $D$ large enough, the reduction map $\mathrm{red}_p$ is surjective. This result was subsequently reinterpreted and refined by several authors -- see for example \cite{elkiesonoyang}, \cite{yang08}, \cite{Kane09}.

In the present paper we are interested in a further refinement of this question: namely whether given several supersingular elliptic curves at several distinct primes there exists a {\em single} elliptic curve with CM by $\order_\field$ which is a lifting of all of them. More precisely, let $\mathbf{p} = \{p_1,\ldots,p_s\}$ be a finite set of distinct odd primes and for each $i$ a fixed embedding $\overline{\Q} \hookrightarrow \overline{\Q_{p_i}}$.
Let $\field$ be an imaginary quadratic field in which the primes $p_1,\ldots,p_s$ are all inert.
For each $i\in \{1,\ldots,s\}$ let $\primeideal_i=\primeideal_{i,\bfH_\field}$ be the prime ideal in $\bfH_\field$ determined by $\overline{\Q} \hookrightarrow \overline{\Q_{p_i}}$.
We now have a simultaneous reduction map
\begin{align*}
\mathrm{red}_{\mathbf{p}}: \map{\CM_{\order_\field}}{\prod_{i\leq s}\SS_{p_i}}{E}{(E\mod\primeideal_i)_{i\leq s}}.
\end{align*}
The question is whether this map is surjective or not.
We expect that this is the case as long as $D$ is large enough (depending on $\bfp$). A consequence of our main Theorem \ref{thm:main-elliptic} stated below is that this is the case at least under some additional congruence assumptions on $D$:

\begin{theorem}[Simultaneous lifting]
Let $q_{1},q_{2}$ be two distinct, odd primes. Let $p_{1},\ldots,p_{s}$ be distinct odd primes also distinct from $q_{1}$ and $q_{2}$. There is some $D_{0}\geq 1$ (depending on $\mathbf{p} = \{p_1,\ldots,p_s\}$ and $q_1,q_2$) with the following property: for any imaginary quadratic field $\field$ of discriminant $D$ such that
\begin{enumerate}
\item $|D|\geq D_0$,
\item each $p_i$ for $1\leq i\leq s$ is inert in $\field$, and
\item $q_{1}$ and $q_{2}$ are split in $\field$,
\end{enumerate}
the map $\mathrm{red}_{\mathbf{p}}$ is surjective. In other terms, for all $E_{i}\in\SS_{p_{i}}$, $i=1,\ldots,s$ there exists an elliptic curve $E\in\CM_{\order_\field}$ satisfying
\begin{equation*}
  E\mod\primeideal_i= E_{i}\quad(i=1,\ldots,s).
\end{equation*} 
\end{theorem}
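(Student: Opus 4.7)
The plan is to derive this as a qualitative consequence of the joint equidistribution statement (Theorem \ref{thm:main-elliptic}) announced immediately above the theorem. First I would reformulate everything adelically: by the main theorem of complex multiplication, $\CM_{\order_\field}$ is a torsor under $\Pic(\order_\field)\simeq \Kt\backslash \AKft/\whOrt$; by the Deuring correspondence each $\SS_{p_i}$ identifies with a finite double-coset set $\quat_{p_i,\infty}^\times(\Q)\backslash \quat_{p_i,\infty}^\times(\Af)/\widehat{\mathscr{O}}_i^\times$ for a maximal order $\mathscr{O}_i\subset \quat_{p_i,\infty}$. Since each $p_i$ is inert in $\field$, I can choose an optimal embedding $\iota_i\colon\field\hookrightarrow\quat_{p_i,\infty}$, hence an embedding of $\Q$-tori $T:=\Res_{\field/\Q}\Gm/\Gm\hookrightarrow \punits{\quat_{p_i,\infty}}$. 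The map $\mathrm{red}_\bfp$ is then induced by the diagonal embedding $T\hookrightarrow G:=\prod_i \punits{\quat_{p_i,\infty}}$ applied to a suitable basepoint in $[G]$ and projected to $\prod_i \SS_{p_i}$.

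With the adelic picture in hand, the deduction is purely measure-theoretic. Under the congruence hypotheses -- in particular that $q_1,q_2$ are split in $\field$ -- Theorem \ref{thm:main-elliptic} asserts that the pushforward of the normalised counting measure on $\CM_{\order_\field}$ under $\mathrm{red}_\bfp$ converges weak-$*$, as $|D|\to\infty$, to the natural (Eichler mass) probability measure on $\prod_i \SS_{p_i}$. Since this limiting measure has full support on the finite target, a standard compactness argument yields a constant $D_0=D_0(\bfp,q_1,q_2)$ such that for every $|D|\geq D_0$ the pushforward already gives positive mass to every point of $\prod_i \SS_{p_i}$; equivalently, $\mathrm{red}_\bfp$ is surjective.

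The entire difficulty is therefore concentrated in Theorem \ref{thm:main-elliptic}, and this is where I expect the main obstacle to be. Marginal equidistribution in each factor $\SS_{p_i}$ is classical (Duke, Iwaniec, Cornut-Vatsal) and can be obtained either through subconvexity for Rankin-Selberg $L$-functions or through Linnik's ergodic method. The step from marginal to \emph{joint} equidistribution on the product amounts to ruling out non-trivial joinings of the several $T$-orbits, and this is exactly the input from Einsiedler-Lindenstrauss's classification of joinings of higher-rank diagonalisable actions, as advertised in the abstract. The higher-rank structure is supplied precisely by the two split primes: at a split place $q_j$ the local torus $T(\Q_{q_j})$ is isomorphic to $\Q_{q_j}^\times$, so the torus acts on $[G]$ through two independent $\Q_{q_j}^\times$-diagonalisable flows, yielding a rank-two action. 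Concretely, the hard steps I anticipate are (i) arranging the adelic periods and chosen basepoints so that the joinings theorem can be applied (including establishing non-escape of mass for these torus orbits in the non-compact ambient space), and (ii) enumerating and excluding the ``intermediate'' homogeneous measures -- those supported on orbits of proper $\Q$-algebraic subgroups of $G$ containing the diagonal $T$ -- by exploiting the arithmetic rigidity enforced by the two split primes. Once these exotic joinings are eliminated, only the Haar measure on $\prod_i[\punits{\quat_{p_i,\infty}}]$ remains, and the theorem follows as above.
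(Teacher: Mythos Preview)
Your proposal is correct and matches the paper's approach: the paper explicitly states that the simultaneous lifting theorem is a consequence of Theorem~\ref{thm:main-elliptic}, and your deduction---equidistribution toward a measure of full support on the finite target $\prod_i\SS_{p_i}$ forces surjectivity for all sufficiently large $|D|$---is exactly the intended argument. Your sketch of the adelic setup and of the proof of Theorem~\ref{thm:main-elliptic} itself (Deuring correspondence, diagonal torus embedding, rank two from the two split primes, Einsiedler--Lindenstrauss joinings classification) also accurately reflects the paper's strategy in \S\ref{sec:intro-reductionstep}--\S\ref{sec:introductiondynamics} and \S\ref{sec:simplyconnectedinvariance}.
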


In fact this surjectivity result admits a more precise formulation in the form of an equidistribution statement on the product of $\prod_{1 \leq i\leq s}\SS_{p_i}$. 

Given $E_{0}\in\SS_{p}$, we define $$w_{E_{0}}=\lvert\mathrm{End}^{\times}(E_{0})/\{\pm1\}\rvert.$$
 Eichler's mass formula states that
\begin{align}\label{eq:Eichler mass formula}
\sum_{E_{0}\in\SS_{p}}\frac{1}{w_{E_{0}}}=\frac{p-1}{12},
\end{align} 
see e.g.~\cite[Eq.~1.2]{Gross}.
 We then introduce the probability measure 
\begin{equation*}
  \nu_{p}=\tfrac{12}{p-1}\sum_{E_{0}\in\SS_{p}}\tfrac{1}{w_{E_{0}}}\delta_{E_{0}}.
\end{equation*}
\begin{remark}It is known that the product of the $w_{E_{0}}$ over the distinct isomorphism classes is a divisor of 12, cf.~\cite[Eq.~1.1]{Gross}, and hence asymptotically most weights are equal to $1$ so the above measure  
is almost uniform.
\end{remark}

Using the observation in~\cite{michel04} referred to previously, the third author established that for every $E_{0}\in\SS_{p}$ we have
\begin{equation*}
  \frac{\lvert\{E\in\CM_{\order_\field}:E\mod \primeideal \cong E_{0}\}\rvert}{\lvert\CM_{\order_\field}\rvert}\rightarrow\nu_{p}(E_{0})
\end{equation*}
as $D\to-\infty$ along the set of fundamental discriminants. In other terms, the push-forward of the uniform probability measure on $\CM_{\order_\field}$ by the map $\mathrm{red}_{p}$ converges to the measure $\nu_p$.

We will establish a similar equidistribution statement towards the product measure $\bigotimes_{i\leq s}\nu_{p_i}$; in fact we  include an additional archimedean equidistribution result.

Let $\EllC$ be the moduli space of all complex elliptic curves up to $\Cc$-isomorphism. This space is identified with the space of lattices in $\Cc$ up to $\Ct$-homothety, i.e.~the complex modular curve  
$$Y_0(1)=\SLdZ\bash \Hh,$$
via the map
$$[z]=\SLdZ.z\mapsto [\Lambda_z]:=[\Zz+\Zz.z]\mapsto[\Cc/\Lambda_z].$$
In this representation an elliptic curve $E$ with CM by $\order_\field$ is one for which the corresponding lattice $\Lambda_{z_E}$ satisfies
$$\End(\Lambda_{z_E}):=
\{z\in\Cc: z.\Lambda_{z_E}\subset \Lambda_{z_E}\}=\order_\field\subset\field\subset\Cc.$$
By this identification $\EllC$ is equipped with a probability measure $\nu_{\infty}$ corresponding to the normalized hyperbolic measure $\frac3\pi\frac{dxdy}{y^2}$. 

Let $\mathrm{red}_\infty$ denote the obvious injection
\begin{equation*}
  \mathrm{red}_\infty:\map{\CM_{\order_\field}}{\EllC}{E}{E(\Cc)}.
\end{equation*}
It was proven by Duke in \cite{duke88} that the push-forward of the uniform probability measure on $\CM_{\order_\field}$ converges to $\nu_{\infty}$ as $D\ra -\infty$.

Setting now $$\bfp=\{p_0=\infty\}\cup\{p_i: 1\leq i\leq s\}$$ we consider the map $\mathrm{red}_\bfp = (\mathrm{red}_{p_i})_{0 \leq i \leq s}$ and its image
\begin{equation*}
  \mathrm{red}_\bfp(\CM_{\order_\field})=\big\{\big(\red_{p_i}(E)\big)_{0\leq i\leq s}:E\in\CM_{\order_\field}\big\}\subset\Ell_\infty\times\prod_{i=1}^s\SS_{p_i}.
\end{equation*}
We prove
\begin{theorem}\label{thm:main-elliptic}
  Let $q_{1},q_{2}$ be distinct, odd primes which are distinct from $p_1,\ldots,p_s$. As $D\ra-\infty$  along the set of fundamental negative discriminants such that
  \begin{enumerate}
    \item each $p_i$ for $1\leq i\leq s,$ is inert in $\field=\Q(\sqrt D)$, and
    \item $q_{1},q_{2}$ are split in $\field$,
  \end{enumerate}
the push-forward of the counting probability measure on $\CM_{\order_\field}$ by $\mathrm{red}_\bfp$ equidistributes towards $\nu_\infty\otimes\nu_{p_{1}}\otimes\cdots\otimes\nu_{p_{s}}$.
 \end{theorem}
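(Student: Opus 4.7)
The plan is to reinterpret the simultaneous reduction map adelically and then apply the Einsiedler--Lindenstrauss classification of joinings of higher-rank diagonalizable actions, in the spirit of the Einsiedler--Lindenstrauss--Michel--Venkatesh circle of ideas. Let $\bfT = \mathrm{Res}_{\field/\Q}\Gm/\Gm$ be the CM torus; let $\G_{0} = \PGL_{2}/\Q$, and for each $i = 1, \dots, s$ let $\G_{i}$ be the projective unit group of the definite quaternion algebra $\quat_{p_{i},\infty}$, an inner $\Q$-form of $\PGL_{2}$ ramified at $\{p_{i},\infty\}$. Since each $p_{i}$ is inert in $\field$, we obtain embeddings $\bfT\hookrightarrow\G_{i}$ for $i\geq 1$ (together with the standard $\bfT\hookrightarrow\G_{0}$). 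The moduli spaces $\EllC$ and $\SS_{p_{i}}$ are identified with the adelic double cosets $\G_{0}(\Q)\backslash\G_{0}(\Aa)/K_{0}$ and $\G_{i}(\Q)\backslash\G_{i}(\Af)/K_{i,\mathrm{f}}$ respectively (the latter being finite because $\G_{i}(\R)$ is compact for $i\geq 1$). Under these identifications, the image of $\red_{\bfp}$ is a single orbit of the diagonally embedded adelic torus $\bfT(\Aa)$ inside the product space
\[
X := \left(\G_{0}(\Q)\backslash\G_{0}(\Aa)/K_{0}\right)\times\prod_{i=1}^{s}\left(\G_{i}(\Q)\backslash\G_{i}(\Af)/K_{i,\mathrm{f}}\right).
\]

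Take a weak-$\ast$ accumulation point $\mu$ of the pushforward measures as $|D|\to\infty$ along the stated sequence of fundamental discriminants. Since the only non-compact factor is the archimedean one, non-escape of mass reduces to Duke's classical estimate, so $\mu$ is a probability measure on $X$. Projecting $\mu$ onto the archimedean factor yields $\nu_{\infty}$ by Duke's theorem, while projecting onto each $\SS_{p_{i}}$ yields $\nu_{p_{i}}$ by the observation of the third named author recalled in the introduction (the equidistribution of inert toric orbits on the definite Shimura set, following Gross, Duke, and Iwaniec). Thus $\mu$ is a measure-theoretic joining of these Haar-type measures.

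The crucial new input is the rigidity coming from the two auxiliary split primes. For each $j\in\{1,2\}$, since $q_{j}$ splits in $\field$, the local torus $\bfT(\Q_{q_{j}})$ is a one-dimensional split torus, and its diagonal image in $\prod_{i=0}^{s}\G_{i}(\Q_{q_{j}})$ is a non-compact, $\R$-diagonalizable subgroup acting on $X$ by Hecke correspondences and preserving $\mu$. The joint action of $\bfT(\Q_{q_{1}})\times\bfT(\Q_{q_{2}})$ is therefore a rank-two $\R$-diagonalizable action, and this is exactly the setting in which the Einsiedler--Lindenstrauss classification of joinings of higher-rank diagonalizable actions applies. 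It follows that $\mu$ is an algebraic joining: the normalized Haar measure of a closed periodic orbit of some connected $\Q$-subgroup $\bfH\subseteq\prod_{i=0}^{s}\G_{i}$ that contains the diagonal image of $\bfT$ and projects surjectively onto each factor.

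The main step of the proof, and the one I expect to be the most delicate, is to rule out non-trivial intermediate $\bfH$. The groups $\G_{0},\G_{1},\dots,\G_{s}$ are pairwise non-isomorphic over $\Q$, since their sets of ramified places $\emptyset,\{p_{1},\infty\},\dots,\{p_{s},\infty\}$ are pairwise distinct, so there is no non-trivial algebraic correspondence between any two of them coming from an isomorphism of factors. Together with a volume/discriminant bound of the kind used in Einsiedler--Lindenstrauss--Michel--Venkatesh -- a non-trivial intermediate $\bfH$ would carry a periodic orbit of volume bounded uniformly in $\bfp$, whereas the orbit of $\bfT(\Aa)$ has volume tending to infinity with $|D|$ because of class-number growth -- this forces $\bfH=\prod_{i=0}^{s}\G_{i}$. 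Hence $\mu$ is the Haar measure on $X$, which under the identifications is precisely $\nu_{\infty}\otimes\nu_{p_{1}}\otimes\cdots\otimes\nu_{p_{s}}$, completing the proof.
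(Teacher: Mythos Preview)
Your overall architecture matches the paper's: the adelic reinterpretation of $\red_{\bfp}(\CM_{\order_\field})$ as a diagonal torus orbit, the use of Duke's theorem and its definite-quaternion analogue to control the marginals, and the appeal to the Einsiedler--Lindenstrauss joinings classification via the rank-two action coming from the two split primes $q_1,q_2$. The observation that the $\G_i$ are pairwise non-isomorphic over $\Q$ (distinct ramification sets) is exactly the hypothesis the paper checks before invoking \cite[Thm.~1.8]{EL-joining2}.

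There is, however, a genuine gap in your last step. From ``$\bfH=\prod_{i}\G_i$'' you conclude directly that $\mu$ is the Haar measure on $X$. But the Einsiedler--Lindenstrauss theorem does not give invariance under $\G(\adele)$; it gives invariance under $\G(\adele)^{+}$, the image of the simply connected cover $\prod_i\quat_i^{1}(\adele)$. The quotient $\G(\Q)\backslash\G(\adele)/\G(\adele)^{+}$ is a nontrivial compact abelian group (isomorphic via the reduced norms to a product of copies of $\Q^{\times}\backslash\adele^{\times}/(\adele^{\times})^{2}$), so a $\G(\adele)^{+}$-invariant probability measure on $[\G]$ need not be the Haar measure. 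What saves you is the specific choice of level structure: the $K_{\mathrm{f},i}$ are the projective unit groups of \emph{maximal} orders, and for these the reduced norm surjects onto $\widehat{\Z}^{\times}$. Combined with $\adelef^{\times}=\Q_{>0}\widehat{\Z}^{\times}$, this forces the double quotient $\G(\Q)\backslash\G(\adele)/K_{\mathrm{f}}\G(\adele)^{+}$ to be a singleton, and only then does $\G(\adele)^{+}$-invariance of $\mu$ descend to the Haar measure on $X$. The paper carries this out explicitly (the ``character spectrum'' argument in \S\ref{sec:ProofMainDyn}); you should not omit it.

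A smaller point: the volume/discriminant bound you invoke to rule out intermediate $\bfH$ is not the operative mechanism here. Once the marginals are Haar, any intermediate $\bfH$ must surject onto each factor, and then a Goursat-type argument together with the pairwise non-$\Q$-isomorphism of the $\G_i$ already forces $\bfH=\prod_i\G_i$. No comparison of orbit volumes is needed (and in fact the paper does not make one; the argument is packaged inside \cite[Thm.~1.8]{EL-joining2}).
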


This equidistribution result is true more generally if instead of a sequence of maximal orders $\order_\field$ we consider sequences of quadratic orders $\order\subset \order_\field\subset\field$ and the sets $\CM_\order$ of elliptic curves with complex multiplication by $\order$ (again with suitable congruence condition); we refer to Theorem \ref{thm:main-ellipticorder} for the precise statement.

\begin{remark}\label{rem:remarkaftermainell}
In this extended setting, a version of this equidistribution result was proven by Cornut \cite{Cornut-Invent} using ideas of Vatsal \cite{Vatsal-Invent} for non-maximal orders whose discriminants are of the form $Dp^{2n}$ for $D<0$ a fixed fundamental discriminant, $p$ a fixed prime coprime to $D$ and $n\rightarrow +\infty$.
As in the present paper, these works make crucial use of dynamics and ergodic theory on locally  homogeneous spaces. A chief difference is that the above mentioned works use unipotent dynamics (i.e.~Ratner's theorems such as \cite{Ratner-joining})
while we also need to rely on the recent work of Einsiedler and Lindenstrauss on rigidity of joinings of higher rank diagonalizable actions~\cite{EL-joining2}.

An especially interesting case is that of orders whose discriminants are of the form $Dp^n$ for $D<0$ a fixed fundamental discriminant, $n\geq 1$ a fixed exponent, and $p\rightarrow +\infty$ along the primes coprime to $D$ (see Theorem~\ref{thm:main-ellipticorder}). Although the quadratic field $\field=\Qq(\sqrt D)$ is fixed, the fact that $p$ is varying also seems to require the use of rigidity of higher rank diagonalizable actions.
  As explained to us by Henri Darmon such a case could be interesting for the study of certain Euler systems. We will pursue other variants along this direction in future joint work with D.~Ramakrishnan.
\end{remark}

\begin{remark}
Theorem~\ref{thm:main-elliptic} can be upgraded in several directions: 

\begin{itemize}
	\item[--] The most immediate is by incorporating additional level structures; for instance by considering for $N\geq 1$, the space $Y_0(N)$ of isomorphism classes of pairs of elliptic curves $\vphi:E_1\mapsto E_2$ where $\vphi$ has a cyclic kernel of order $N$. 
Suppose that every prime divisor of $N$ splits in $\field$, then if $(\vphi:E_1\mapsto E_2)\in Y_0(N)(\Cc)$ is such that $E_1\in \CM_{\order_\field}$, then  $E_2\in \CM_{\order_\field}$ as well; the set,  $Y_{0}(N)_\field^{\mathrm{cm}}$ say, of such "CM" points  is then called the set of {\em Heegner points} (\cite[I. 3.]{Gross2}). Note that if, in addition, $N$ has two distinct prime factors $q_1,q_2$, the splitting condition is automatic.

\item[--]  In \cite{HMRL}, S. Herrero, R. Menares, and J. Rivera-Letelier  have established a refined version of the equidistribution of CM elliptic curves modulo an inert prime $p$ (more generally for any kind of prime) by describing how the CM curves equidistribute on $\SS({\Cc_p})$, the moduli space of elliptic curves over $\Cc_p$ with supersingular reduction. Their proof relies among other things on Duke's approach to prove equidistribution (i.e.~by bounding Fourier coefficient of an adequate theta series). In a forthcoming project with the authors of \cite{HMRL} we will combine both their and our methods to prove the joint equidistribution of $\CM_{\order}$  on the product of the moduli spaces $\SS({\Cc_{p_i}})$ for $i=1,\ldots,s$.

\item[--] These equidistribution theorems also generalize to equidistribution for CM points in the space of abelian varieties of dimension $2[F:\Qq]$,
admitting complex multiplication by a quaternion algebra defined over a fixed totally real number field $F$.  We refer to \cite{Vatsal-Invent,Cornut-Invent,CornutJetchev,CVDoc,CVLMS} for examples of work in this direction.
\end{itemize}

\end{remark}

\begin{remark}
Though seemingly artifical, the splitting assumption on $q_1,q_2$ in Theorem~\ref{thm:main-elliptic} allows the use of diagonalizable dynamics alluded to in Remark~\ref{rem:remarkaftermainell} (see the discussion after Theorem~\ref{thm:main-dynamic}). One may generalize Theorem~\ref{thm:main-elliptic} to treat for instance all imaginary quadratic fields $\field$ for which there are two split primes $q_1,q_2 \not\in \{2,p_1,\ldots,p_s\}$ below a fixed threshold $C>0$.
Furthermore, even assuming GRH one cannot remove the splitting assumption using the arguments in the present paper as the dynamical argument uses two fixed split primes that are not allowed to vary with the field.
\end{remark}


\subsection{Reduction to an equidistribution statement}\label{sec:intro-reductionstep}
As hinted at above, Theorem \ref{thm:main-elliptic} is a consequence of a dynamical statement in the context of locally homogeneous spaces. The first step is to realize $\red_{\bfp}(\CM_{\order_\field})$ as  (the projection of) an orbit of the adelic points of the torus
\begin{align*}
\Tt_{\field}=\Res_{\field/\Qq}\Gm/\Gm
\end{align*}
on a product of adelic locally homogeneous spaces. We describe this realization here.
Using this realization, Theorem \ref{thm:main-elliptic} is then obtained from an equidistribution result for such orbits, more specifically Theorem~\ref{thm:main-dynamic} which will be discussed in the next section.

We use several $\Q$-forms of $\PGL_{2}$, namely the projective unit groups $\G_i=\PBt_i$ for $i=0,\ldots,s$ of the quaternion algebras $\quat_0,\quat_1,\ldots,\quat_s$ defined over $\Qq$
where $\quat_0=\mathrm{Mat}_2$ is the split quaternion algebra and where $\quat_{i}$ for $1\leq i\leq s$ is the quaternion algebra ramified exactly at $\infty$ and $p_i$.

As explained in \S\ref{sec:supersingular} there is for each $p_i$ an identification
\begin{align*}
\psi_i:\SS_{p_i} \overset{\sim}{\longrightarrow}\lrquot{\G_i(\Qq)}{\G_i(\adele)}{\G_i(\Rr)\punitsadelef{\mcO_i}}
\end{align*}
where $\mathcal{O}_i \subset \quat_i(\Q)$ is some fixed maximal order and $\widehat{O_i} = \mathcal{O}_i \otimes \widehat{\Z}$.
There is also an identification for the archimedean place
\begin{align*}
\psi_0:\Ell_\infty \overset{\sim}{\longrightarrow}\lrquot{\PGL_2(\Qq)}{\PGL_2(\adele)}{\mathrm{PSO}_2(\Rr)\PGL_2(\whZ)}.
\end{align*}
Let $E\in\CM_{\order_\field}$. 
To $E$ is associated for each $i=0,\ldots, s$ a class $[g_{E,i}] = \psi_i(\red_{p_i}(E))$ and an embedding of $\Qq$-algebras $
\iota_i:\bfK\hookrightarrow \quat_{i},
$
and consequently a morphism of $\Qq$-algebraic groups
$$\iota_i:\Tt_\field\hookrightarrow \G_i.$$

Let us recall that the ideal class group $\Pic(\OK)$ acts simply transitively on $\CM_{\order_\field}$; we denote this action by $$([\mfa], E)\mapsto [\mfa]\star E$$
where $[\mfa]$ is the class of an ideal $\mfa\subset \order_\field$. We also have a natural identification
\begin{align*}
\Pic(\order_\field)\simeq \lrquot{\Tt_\field(\Qq)}{\Tt_\field(\adele)}{\Tt_\field(\Rr)\Tt_\field(\whZ)}
\end{align*}
where $\Tt_\field(\whZ)$ corresponds to the projectivized group of units in the completion $\widehat{\order_{\field}}$.
where the integral structure defining $\Tt_\field(\whZ)$ is the one induced by the choice of maximal orders $\mathcal{O}_1,\ldots, \mathcal{O}_s$.
We show in \S\ref{sec:red_infty} and \S\ref{sec:comp Picard action} that for $i=0,\ldots, s$, if $t_\mathrm{f}\in\torus_\field(\adelef)$ is such that $[t_\mathrm{f}]\simeq[\mfa]$ then
\begin{align*}
\psi_i(\red_{p_i}([\mfa]\star E))=[\iota_i(t_\mathrm{f}^{-1}) g_{E,i}].
\end{align*}

We now collect these identities for all $0 \leq i\leq s$. 
Let us set 
\begin{gather*}
\iota = (\iota_i)_{0 \leq i\leq s}: \Tt_\field \to \G=\prod_{i=0}^s\G_i
\end{gather*}
and define $\rmg_E = (g_{E,i})_{0 \leq i\leq s}$ as well as the compact groups
\begin{align*}
K_\infty=\PSO_2(\Rr)\times\prod_{i=1}^s \G_i(\Rr)
\quad \text{and} \quad
\Kf=\PGL_2(\whZ)\times\prod_{i=1}^s \mathrm{P}\what\mcO_i^\times.
\end{align*}
We obtain that the componentwise identification
\begin{align*}
\psi:\Ell_\infty\times\prod_i\SS_{p_i}\overset{\sim}{\longrightarrow} \lrquot{\G(\Qq)}{\G(\Aa)}{K_\infty \Kf}
\end{align*}
satisfies for any $[t_\mathrm{f}]$ and $[\mfa]$ as above
\begin{align}\label{eq:comp equivariance}
\psi(\red_\bfp([\mfa]\star E))=[\iota(t_\mathrm{f}^{-1})\rmg_E].
\end{align}
Therefore $\red_\bfp(\CM_{\order_\field})$ is identified with the projection of the orbit of the torus $\torus_\field(\Aa)$ embedded diagonally into the product group $\G$ by $\iota$: 
\begin{align*}
[\iota(\Tt_\bfK(\adele)) \rmg_E]\subset\lrquot{\G(\Qq)}{\G(\Aa)}{K_\infty\Kf}.
\end{align*}
Let $\torus_\iota$ denote the image $\iota(\torusK)$.
The situation can be summarized in the following commutative diagram:
\begin{equation*}
  \begin{gathered}
    \xymatrix{
     [\mfa]\star E \in \Pic(\OK).E = \CM_{\order_\field}
     \ar[r]^(.57){\red_\bfp}
     \ar[d]_{\iota} 
     & \Ell_\infty\times\prod_{i=1}^{s}\SS_{p_{i}}
     \ar[d]^{\psi}\\
      [\iota(t_\mathrm{f}^{-1})] \in \displaystyle{\lrquot{\Tt_\iota(\Qq)}{\Tt_\iota(\adele)}{\Tt_\iota(\R)\Tt_\iota(\whZ)}}
      \ar[r]
      &
      \displaystyle{\lrquot{\G(\Qq)}{\G(\Aa)}{K_\infty \Kf}}
    }
  \end{gathered}
\end{equation*}
Here, the bottom arrow is given by mapping $[\iota(t)]$ to $[\iota(t)\rmg_E]$ seen as an element in the double quotient on the right.
This discussion reduces Theorem~\ref{thm:main-elliptic} to considering the behaviour of sets of the form $[\iota(\Tt_\bfK(\adele)) \rmg_E]$ as the discriminant of $\field$ goes to negative infinity.

\subsection{Equidistribution of diagonal torus orbits}\label{sec:introductiondynamics}

By the reduction step in the previous section, the proof of Theorem \ref{thm:main-elliptic} is a direct consequence of a general equidistribution theorem. 
To state it in full, we (re-)introduce some notation.

Let $\quat_{i}$ for $\ 0\leq i\leq s$ be finitely many distinct rational quaternion algebras.
We denote by $\G_i=\PBt_i$ the associated projective group of units and set
\begin{align*}
\G = \prod_{i=0}^s \G_i
\quad \text{and}\quad
[\G]=\G(\Qq)\bash\G(\Aa).
\end{align*}
Furthermore, we fix for each $i \in \{0,\ldots,s\}$ a compact open subgroup $\compactopen_{\mathrm{f},i} < \G_i(\adelef)$
 and define 
 \begin{equation}\label{eq:compactopen}\compactopenf=\prod_i \compactopen_{\mathrm{f},i}\hbox{ and }[\G]_{\compactopenf}=[\G]/\compactopenf.	
 \end{equation}
By a theorem of Borel and Harish-Chandra \cite{borelharishchandra}, the homogeneous space $[\G]$ comes with a Haar probability measure which we denote by $dg$.
It induces a probability measure on $[\G]_{\compactopenf}$ (by restriction to the continuous right $\compactopenf$-invariant functions on $[\G]$) which we also denote by $dg$ for simplicity.

\subsubsection{Toral packets and discriminants}

For each $i\in \{0,\ldots,s\}$ we let $\iota_i$ be an embedding  of a quadratic field $\field$ (possibly a real quadratic field) into the quaternion algebra $\quat_i(\bQ)$. 
We thus obtain an induced morphism of $\bQ$-algebraic groups
\begin{equation*}
\iota_i:\torus_\field \to \G_i
\end{equation*}
and a diagonal morphism $$\iota:\torus_\field \to \G.$$
We denote its image by
\begin{equation}\label{eq:def diagonal torus}
\torus_\iota = \big\{(\iota_0(t),\ldots,\iota_s(t)): t \in \operatorname{Res}_{\field/\bQ}(\multigp)/\multigp\big\}.
\end{equation}

For $\rmg = (g_0,\ldots,g_s)$ in $\G(\adele)$ and a tuple $\iota=(\iota_{0},\ldots,\iota_{s})$ of embeddings as above, we denote by
\begin{align*}
[\torus_\iota \rmg] = \G(\Qq)\bash\G(\bQ) \torus_\iota(\adele)\rmg \subset [\G]
\end{align*}
the associated (compact) toral packet and by $[\torus_\iota \rmg]_{\compactopenf}$ its projection to $[\G]_{\compactopenf}$. 

The arithmetic complexity of the toral packet $[\torus_\iota \rmg]$ is measured using the notion of {\em discriminant}; in the present case we define the discriminant as
\begin{align}\label{eq:discproduct}
  \disc([\torus_\iota \rmg]) = \min_{i=0,\ldots,s} \disc([\torus_{\iota_i} g_i]).
\end{align}
where $[\torus_{\iota_i} g_i]$ is the projection of $[\torus_\iota \rmg]$ to the $i$-th factor $[\G_i]$ and the discriminant $\disc([\torus_{\iota_i} g_i])$ is defined in
 \cite[\S4]{dukeforcubic}.

\begin{remark}
In the context of Theorem~\ref{thm:main-elliptic} and in view of Section~\ref{sec:intro-reductionstep}, the above discriminant of a toral packet $[\torus_\iota \rmg]$ with associated field $\field$ is comparable to $|\disc(\field)|$ and the terms on the right-hand side of \eqref{eq:discproduct} are roughly the same size.
\end{remark}
 
\subsubsection{The main theorem}

We are now able to formulate the dynamical input for Theorem \ref{thm:main-elliptic}, which is  of independent interest. 

\begin{theorem}[Equidistribution of diagonal torus orbits]\label{thm:main-dynamic}
Let $q_1,q_2$ be two distinct odd primes. Let $\torus_{n}=\torus_{\iota_n}<\G$ for $n\geq 1$ be a sequence of tori as in \eqref{eq:def diagonal torus} with underlying fields $\field_n$ such that  $q_1$ and $q_2$ both split in $\field_n$ for every $n$.
For any $n \geq 1$ let $\rmg_n\in \G(\adele)$.

Let $\compactopenf < \G(\adelef)$ be a compact open subgroup as in \eqref{eq:compactopen}.
We assume that for every $i=0,\ldots,s$ the reduced norm $\Norm_{\quat_i}$ induces a surjective map
\begin{align}\label{eq:assumption compactopen}
\Norm_{\quat_i}: \compactopen_{\mathrm{f},i} \twoheadrightarrow \rquot{\widehat{\Z}^{\times}\!\!}{(\widehat{\Z}^{\times})^2}.
\end{align}

If $\disc([\torus_n \rmg_n]) \to \infty$ as $n \to \infty$
the sequence of packets $[\torus_n \rmg_n]_{\compactopenf}$ equidistributes on $[\G]_{\compactopenf}$.
That is, for any $\compactopenf$-invariant function $f \in \mcC_c([\G])$ we have
\begin{align*}
\int_{[\torus_n]} f(t \rmg_n)dt \to \int_{[\G]} f(\rmg)d\rmg
\end{align*}
as $n \to \infty$ where the left-hand side denotes the integral with respect to the Haar probability measure.
\end{theorem}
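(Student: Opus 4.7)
The plan is to show that every weak-$*$ limit $\mu$ of the probability measures $\mu_n$ supported on the packets $[\torus_n \rmg_n]_{\compactopenf}$ equals the normalized Haar measure on $[\G]_{\compactopenf}$. By compactness of the space of probability measures on $[\G]_{\compactopenf}$ this suffices: pass to a subsequence converging weakly to some $\mu$ and show $\mu$ is the Haar measure. The argument will combine classical equidistribution on each factor with the joining classification of Einsiedler--Lindenstrauss \cite{EL-joining2}.

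First I would establish that the marginal of $\mu$ on each factor $[\G_i]_{\compactopen_{\mathrm{f},i}}$ is Haar, which simultaneously rules out escape of mass and furnishes the raw material for the joining theorem. The projection of $[\torus_n \rmg_n]$ to the $i$-th factor is a single-torus packet of discriminant at least $\disc([\torus_n \rmg_n]) \to \infty$, so its equidistribution toward Haar on $[\G_i]_{\compactopen_{\mathrm{f},i}}$ follows from the quaternionic form of Duke's theorem (via subconvexity of $L$-functions). The congruence hypothesis \eqref{eq:assumption compactopen} is essential here: it guarantees that the limit lives on the full quotient $[\G_i]_{\compactopen_{\mathrm{f},i}}$ rather than on a proper union of reduced-norm connected components.

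Next I would produce invariance of $\mu$ under a higher-rank diagonalizable group. Because $q_1, q_2$ split in $\field_n$, for $j=1,2$ and each $i$ the local torus $\iota_{n,i}(\torus_{\field_n}(\Qq_{q_j}))$ is a split torus inside $\G_i(\Qq_{q_j})$. After conjugating $\rmg_n$ at the two places $q_1,q_2$ by bounded local elements one may replace each of these tori by a fixed standard split torus independent of~$n$, so that the adjusted measures $\tilde\mu_n$ (and hence $\tilde\mu$) become invariant under the diagonal action of a fixed group $A \simeq (\Qq_{q_1}^\times / \Zz_{q_1}^\times) \times (\Qq_{q_2}^\times / \Zz_{q_2}^\times)$ on $[\G] = \prod_i [\G_i]$. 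This action is diagonalizable, of higher rank, and of positive entropy. Invoking the joining classification of Einsiedler--Lindenstrauss \cite{EL-joining2}, $\tilde\mu$ is then either the product Haar measure on $[\G]_{\compactopenf}$ or a joining supported on the graph of a ``Hecke-type'' algebraic correspondence among the factors.

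The hard part will be ruling out the non-Haar algebraic joinings. A non-trivial such joining would couple the embeddings $\iota_{n,i}$ for distinct indices by an algebraic correspondence of bounded complexity, uniformly in~$n$. Since the quaternion algebras $\quat_i$ are pairwise distinct over $\Qq$, I would argue, using Galois-theoretic constraints on how $\field_n$ embeds simultaneously into the $\quat_i$, that such a correspondence forces the pairs $(\field_n,\rmg_n)$ to lie in a set of bounded arithmetic complexity, directly contradicting $\disc([\torus_n \rmg_n]) \to \infty$. This is the genuinely arithmetic step: the dynamical machinery of Steps 1--3 provides a measure-theoretic classification, and the conclusion requires translating each possible exceptional joining into an explicit arithmetic obstruction that the growth of the discriminant must eventually defeat.
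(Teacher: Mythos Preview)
Your overall architecture (single-factor Duke-type input plus Einsiedler--Lindenstrauss) is right and matches the paper, but Step~3 misidentifies both the mechanism and the difficulty of excluding non-trivial joinings.

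The paper rules them out by a one-line group-theoretic observation, not by an arithmetic argument involving the discriminant. The groups $\G_i=\PBt_i$ are adjoint (so any $\Q$-isogeny between two of them would be a $\Q$-isomorphism) and are anisotropic over pairwise distinct completions of $\Q$; hence they are pairwise non-isogenous over~$\Q$. This non-isogeny, together with the single-factor $\G_i(\adele)^+$-invariance from Theorem~\ref{thm:dukethmforquaternionalgebras} and a uniform bound on the complexity of the tori $\torus_{\iota_n}$, is fed directly into \cite[Thm.~1.8]{EL-joining2}, which already packages the exclusion of non-trivial joinings and outputs $\G(\adele)^+$-invariance of any weak${}^\ast$-limit. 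Your proposed mechanism---a bounded-complexity correspondence among the $\iota_{n,i}$ forcing $(\field_n,\rmg_n)$ into a set of bounded discriminant---is not how this works: an algebraic joining is the Haar measure on a fixed closed subgroup orbit of $\G$, independent of $n$, and its (non-)existence is decided by the $\Q$-isogeny type of the factors, not by the growth of $\disc([\torus_n\rmg_n])$.

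You also place hypothesis \eqref{eq:assumption compactopen} at a different point from the paper. There it is \emph{not} used to obtain Haar marginals (Theorem~\ref{thm:dukethmforquaternionalgebras} yields only $\G_i(\adele)^+$-invariance, with no assumption on $\compactopen_{\mathrm{f},i}$). Rather, once $\G(\adele)^+$-invariance of the limit $\mu$ on $[\G]$ is known, every $\compactopenf$-invariant test function $f$ is split as $f=\projchar{f}+(f-\projchar{f})$, where the second piece integrates to zero over every $\G(\adele)^+$-orbit and hence has equal $\mu$- and $m$-integrals. Hypothesis \eqref{eq:assumption compactopen} then enters only to show, via the reduced norm and $\Aft=\Q_{>0}\whZt$, that the double quotient $\lrquot{\G(\Q)}{\G(\adele)}{\compactopenf\,\G(\adele)^+}$ is a singleton, so that $\projchar{f}$ is constant. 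Your organization (Haar marginals first, then joinings) can be made to work once Step~3 is fixed as above, but the paper's route is shorter and isolates the role of \eqref{eq:assumption compactopen} cleanly at the end.
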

\begin{remark}
If $\field_n$	is a real quadratic field for every $n$, the condition that $q_2$ splits in $\field_n$ can be omitted as the "infinite place" already splits in $\field_n$.
\end{remark}

\begin{remark}\label{remeichler}
The condition \eqref{eq:assumption compactopen} is in particular satisfied if $\compactopen_{\mathrm{f},i} = \punitsadelef{\mathcal{O}_i}$ for an Eichler order $\mathcal{O}_i$ in $\quat_i(\Q)$ -- see also \S\ref{sec:Eichlerorder} for this well-known fact.	
\end{remark}

The proof of Theorem \ref{thm:main-dynamic} is a consequence of the classification of joinings of higher rank diagonalizable actions by Einsiedler and Lindenstrauss \cite{EL-joining2} as formulated in Theorem~\ref{thm:invariance in product}. For this, the splitting assumption is crucial.
Such an assumption is commonly called a {\em Linnik-type condition} and imposing two such splitting conditions gives rise to higher-rank diagonalizable acting groups.
It is reasonable to expect that Theorems~\ref{thm:invariance in product}, \ref{thm:main-dynamic} and~\ref{thm:main-elliptic} remain true without this assumption (as is the case for a single factor). However, a proof without this assumption would require a completely different argument.
\begin{remark}In the case of two factors ($s=2$ in our notations), Valentin Blomer and Farrell Brumley have recently developed a proof of Theorem \ref{thm:main-elliptic} which is conditional on the Generalized Riemann Hypothesis but does not require any splitting assumption. As expected, the methods are entirely different from ours and build on the evaluation of fractional moments of $L$-functions in the spirit of \cite{RS,LR}.  
\end{remark}

The joinings classification \cite{EL-joining2} allows us to show that any weak${}^\ast$-limit of the measures on the packets $[\torus_n\rmg_n]$ is invariant under the image of the simply connected cover of $\G$ in $\G(\adele)$.
Thus, Theorem~\ref{thm:main-dynamic} is reduced to the verification of equidistribution for functions in the character spectrum.
Under the assumption \eqref{eq:assumption compactopen}, the character spectrum on $[\G]_{\compactopenf}$ is trivial and hence equidistribution holds. 
In Corollary \ref{cor:main theorem for general cptopen}, we provide an equivalent condition to equidistribution  for  a general compact open subgroup $\Kf$ under the  assumption that the underlying quadratic fields $\field_n$ for $\ n\geq 1$ do not belong to a finite set of exceptional fields determined by $\Kf$. 

\begin{remark}
As pointed out above, Theorem \ref{thm:main-dynamic} is a rather direct application of the classification of joinings of higher rank diagonalizable actions due Einsiedler and Lindenstrauss \cite{EL-joining2}. There is by now a number of works building on this deep and powerful result, for instance \cite{AES3D,AEShigherdim,AEW-2in4,IlyaJEMS}. As far as we know, the deepest of these applications is the work of Khayutin \cite{IlyaAnnals} who made striking progress on the mixing conjecture by Venkatesh and the third named author (see \cite{Linnikthmexpander}). 
\end{remark}

\subsection{Structure of the paper}
As this article brings together two different areas of research from dynamics and number theory, the authors have put an effort into making it accessible to both of these communities. In particular, most of the material discussed in Sections \ref{sec:ComplexElliptic}, \ref{sec:CMElliptic}, \ref{sec:supersingular} will probably be considered standard by many number theorists but we hope this will be helpful to dynamicists.   
As explained above, the main theorem (Theorem~\ref{thm:main-elliptic}) follows from Theorem~\ref{thm:main-dynamic} which is an equidistribution result of adelic toral orbits. Therefore, we first need to describe the objects above in adelic terms.
In Section~\ref{sec:ComplexElliptic} we give an adelic description of the set of complex elliptic curves. 
In Section~\ref{sec:CMElliptic} we describe adelically the complex multiplication curves, the action of the Picard group on them, and the reduction map $\red_\infty$. 
In Section \ref{sec:supersingular} we describe adelically the set of the supersingular elliptic curves together with a natural measure on them, the associated reduction map and its compatibility with the Picard group action. Then, in Section \ref{sec:diagonalComp} we combine the result of the previous sections to show compatibility between the reductions maps at different primes. This enables us to derive Theorem~\ref{thm:main-elliptic} from Theorem~\ref{thm:main-dynamic} in Section~\ref{sec:Joint equi}. The proof of Theorem \ref{thm:main-dynamic} is then done in two steps: In Section \ref{sec:simplyconnectedinvariance} we show that the limiting measure of the toral orbits is invariant under the image of the simply-connected cover and in Section \ref{sec:ProofMainDyn} we show that equidistribution holds for  $\compactopenf$-invariant functions. Finally in Section \ref{sec:refinements} we discuss further possible strengthenings of our main results.

\subsection*{Acknowledgements}{We want to express our gratitude towards the referee, whose careful reading of the manuscript improved the exposition and helped us remove an inaccuracy present in a preliminary version of this article. We also thank Valentin Blomer, Farrell Brumley, Brian Conrad, Henri Darmon, Dick Gross, Jennifer-Jayne Jakob, Ilya Khayutin, Bjorn Poonen, Dinakar Ramakrishnan and Tomer Schlank for helpful discussions. Part of this work was carried out while Ph.M.~was visiting the Department of Mathematics at Caltech, while M.L.~was visiting the Einstein institute of Mathematics at the Hebrew University of Jerusalem, and while the authors were visiting the Hausdorff Research Institute during the program ``Dynamics: Topology and Numbers''.}

\section{Notations and background}\label{sec:Notations and background}

If $E/k$ is an elliptic curve defined over a field $k$ we will in general see it as defined over an algebraic closure $\ov k$. 
In particular, its ring of endomorphism $\End(E)$ is the ring for $E/\ov k$ and likewise for the set of isogenies $\Hom(E,E')$ (plus the constant zero map) between two elliptic curves $E/k$ and $E'/k$ and similarly for the set of isomorphisms. 
Also we will usually use the same notation $E$ for an elliptic curve and its isomorphism class.

By $\adele$ (resp.~$\adelef$) we mean the ring of adeles (resp.~finite adeles) over $\bQ$ and more generally given a subset $S$ of the set $\places$ of places of $\bQ$ we let $\bQ_S$ be the restricted product of $\bQ_v$ for $v \in S$ with respect to the sequence $(\Zp)_{p\in S}$.
We also write $\fiplaces$ for the set of finite places (i.e.~for the set of rational primes).

For any affine algebraic group $\rmG$ defined over $\Qq$, we denote by $\rmG(\Aa)$ (resp.~$\rmG(\adelef)$) the group of its adelic (resp.~finite adelic) points and more generally by $\rmG(\bQ_S)=\resprod_{v\in S}\rmG(\Qv)$ the group of its $\bQ_S$-points. 
In particular, if $V$ is a finite dimensional $\Qq$-vector space, we denote the sets of its $v$-adic points, $\Q_S$-points, finite adelic, and adelic points by $V(\Qq_v)=V_v=V\otimes_\Qq\Qv$, $V(\Qq_S)$, $V(\Af)$, and $V(\Aa)$ respectively.

We will use this notation also for a number field $\field$ or more generally a finite dimensional $\Qq$-algebra.
That is, the rings of $v$-adic, $\Q_S$-, finite adelic, and adelic points of $\field$ will be denoted by $\field_v=\field(\Qv)$, $\field(\Qq_S)$, $\field(\Af)$, and $\field(\Aa)$ respectively.
Furthermore, viewing the multiplicative group $\fieldt$ of $\field$ as a $\Q$-algebraic group (which we also denote by $\Res_{\field/\Q}(\Gm)$), we write $\field^\times_v=\fieldt(\Qv)$, $\fieldt(\Qq_S)$, $\fieldt(\Af)$, and $\fieldt(\Aa)$ respectively.

\subsection{Homogeneous spaces}
Equipped with the respective natural topologies, the groups $\rmG(\Qv)$,$\rmG(\Q_S)$, $\rmG(\Af)$, and $\rmG(\Aa)$ are locally compact groups. 
The group of rational points $\rmG(\Q)$ then embeds discretely in $\rmG(\Aa)$ via the diagonal embedding. 
We denote by $[\rmG]$ the quotient
\begin{align*}
[\rmG]=\lquot{\rmG(\Qq)}{\rmG(\Aa)}
\end{align*}
and if
$\compactopen<\rmG(\Aa)$ is a subgroup, we denote the associated double quotient by 
$$[\rmG]_{\compactopen}=[\rmG]/\compactopen =
\lrquot{\rmG(\Qq)}{\rmG(\Aa)}{K}.$$
For $g\in\GA$ an element and $A\subset\GA$ a subset we denote their images in $[\G]$ or $[\G]/\compactopen$ by $[g]$, $[g]_\compactopen$ and $[A]$, $[A]_\compactopen$ respectively.
If the subgroup $K$ is understood, we will sometimes drop the index $K$ from the notation above and just write $[g]$ for the class $[g]_K$.

If $\rmG$ has no non-trivial $\Q$-characters (which will always be the case in this paper), $\rmG(\Q)$ is a lattice in $\rmG(\Aa)$ \cite[Thm.~5.5]{platonov} and the groups $\rmG(\Qv)$, $\rmG(\Q_S)$, $\rmG(\Af)$, and $\rmG(\Aa)$ are unimodular. 
In general, we denote by $m_{[\rmG]}$ (or sometimes simply $dg$) a $\rmG(\Aa)$-invariant measure on the quotient $[\rmG]$ (which is unique up to scaling). 
If $\rmG(\Q)$ is a lattice in $\rmG(\Aa)$, we always normalize $m_{[\rmG]}$ to be a probability measure.
 
For $\compactopen<\rmG(\Aa)$ a compact subgroup, we write $m_{[\rmG]_\compactopen}$ for the induced measure on $[\rmG]_{\compactopen}$ which is the restriction of $m_{[\rmG]}$ to the space of $\compactopen$-invariant continuous functions on $[\rmG]$ of compact support. This measure on $[\rmG]_\compactopen$ is a probability measure whenever $m_{[\rmG]}$ is.

Whenever $\compactopenf < \rmG(\Aa)$ is a compact open subgroup, we will call
\begin{align*}
\lrquot{\rmG(\Q)}{\rmG(\adele)}{\rmG(\R)\times \compactopenf}
\end{align*}
the class set of $\compactopenf$. This is always a finite set \cite[Thm.~5.1]{platonov}.

\subsection{Lattices}\label{sec:lattices}
A lattice $L$ in a finite-dimensional $\Q$-vector space $V$ is a finitely generated $\Zz$-module containing a $\Qq$-basis of $V$. 
We denote by $\mcL(V)$ the space of all lattices in $V$. 
In the following, $q$ will always denote a (finite) prime.
A lattice $L_q \subset V_q$ is a finitely generated $\Z_q$-module  containing a $\Q_q$-basis of $V_q$ and we denote by $\mcL(V_q)$ the space of all lattices in $V_q$. 
Given $L\in\mcL(V)$ we write $L_q=L\otimes_\Zz\Zz_q \in \mcL(V_q)$ for the closure of $L$ in $V_q$ and moreover $L_S=\prod_{q\in S}L_q$ whenever $S \subset \fiplaces$ as well as $\widehat{L}=\prod_q L_q$.

\subsubsection{The local-global principle}\label{sec:local-global}
Let $L_0\subset V$ by a fixed lattice. 
The {local-global principle for lattices}  states that the map
$$L\mapsto \what L=(L_q)_q$$ is a bijection 
\begin{align*}
\mcL(V)\simeq \mcL(\what V):=\resprod_{q}\mcL(V_q).
\end{align*}
Here, we denote by $\resprod_{q}\mcL(V_q)$ the set of sequences $(L_q)_q$ of local lattices $L_q\subset V_q$ such that for all but finitely $q$ we have $L_q=(L_{0})_{q}$.
Equivalently, $\mcL(\what V)$ is the set of $\whZ$-modules in $\what V$ commensurable with $\what{L_0}$.
We remark that the definition of $\mcL(\what V)$ is independent of the choice of lattice $L_0 \subset V$.
The inverse map is given by
\begin{equation}\label{conversemap} 
\what L=(L_q)_q\mapsto V \cap \widehat{L} := \bigcap_q V\cap L_q \subset V.	
\end{equation}

\subsubsection{Interpretation in terms of adelic quotients}

Let $\GL_V$ denote the general linear group of $V$. 
If $V=\Qq^n$, we denote this group by $\GL_n$. 
The group $\GL_V(\Qq)$ acts transitively on $\mcL(V)$ and each of the local groups $\GL_V(\Q_q)$ acts transitively on the local space $\mcL(V_q)$.
These local actions induce a transitive action
$$\GL_V(\Af)\curvearrowright \mcL(\what V)$$ 
and therefore a transitive action $\GL_V(\Af)\curvearrowright \mcL(V)$ by the local-global principle: given $g_\mathrm{f}\in\GL_V(\Af)$ and $L$ a lattice in $V$ we have
$$g_\mathrm{f}.L=V\cap(g_\mathrm{f}.\what L).$$ 
This action is compatible with the classical action $\GL_V(\Qq)\lact\mcL(V)$: if $\delta_\mathrm{f}:\GL_V(\Qq)\hookrightarrow \GL_V(\Af)$ denotes the diagonal embedding, one has for any lattice $L$ in $V$ and any $g_\Q \in \GL_V(\Q)$
$$g_\Qq.L=\delta_\mathrm{f}(g_\Qq).L.$$
Let $\GL_V(\Zz)$, $\GL_V(\whZ)$ be the stablilizers of $L_0$ under the corresponding actions.
It follows that
\begin{align}\label{localglobalquotient}
\mcL(V)
&\simeq \rquot{\GL_V(\Qq)}{\GL_V(\Zz)}
\simeq \rquot{\GL_V(\Af)}{\GL_V(\whZ)}\\
&\simeq \lrquot{\GL_V(\Qq)}{\GL_V(\Qq)\times\GL_V(\Af)}{\GL_V(\whZ)}.\nonumber
\end{align}
For the last bijection the group $\GL_V(\Qq)$ is embedded diagonally in the product $\GL_V(\Qq)\times\GL_V(\Af)$ and the bijection is induced by the map
$$g_{\Qq}g_\mathrm{f}\in\GL_V(\Qq)\times\GL_V(\Af)\mapsto g_{\Qq}^{-1}g_\mathrm{f}.L_0\in\mcL(V).$$

Let $\mcL(V_\infty)$ be the space of `real' lattices in $V_\infty=V\otimes_\Qq\Rr$ (i.e.~discrete subgroups of maximal rank).  
We have the identification
$$\mcL(V_\infty)\simeq\GL_V(\Rr)/\GL_V(\Zz)$$
and the following identification in terms of adelic groups : 
\begin{equation}\label{localglobalquotientreal}
\mcL(V_\infty) \simeq \GL_V(\Qq)\bash \GL_V(\Aa)/\GL_V(\whZ).	
\end{equation}
In the latter, $\GL_V(\Qq)$ is embedded diagonally in $\GL_V(\Aa)$ and the bijection  is induced by the map
\begin{equation}\label{adeliclatticemap}
g_{\infty}g_\mathrm{f}\in\GL_V(\Aa)\mapsto g_{\infty}^{-1}g_\mathrm{f}.L_0\in\mcL(V_\infty).	
\end{equation}

\subsubsection{Commensurability}\label{seccommens} 
We recall the following definition.

\begin{definition}
 Two lattices $L,L'\in\mcL(V_\infty)$ are commensurable if there is an integer $n\not=0$ such that $nL'\subset L$.	
\end{definition}

For any $L \in \mcL(V_{\infty})$ we denote by
\begin{align*}
L^0:=L\otimes_\Zz\Qq
\end{align*}
the $\Q$-subspace of $V_\infty$ spanned by $L$.
Then two lattices $L,L'\in\mcL(V_\infty)$ are commensurable if and only if $L^0=(L')^{0}$.
Hence, the commensurability class of $L$ is simply the set of lattices in $L^0$, that is, 
$$\mcL(L^0)=\GL_V(\Qq).L.$$
Here, $\GL_V(\Qq)$ is viewed as a subgroup of $\GL_V(\Rr)$. Alternatively, if $g_\infty\in\GL_V(\Rr)$ is such that $g_\infty^{-1}L_0=L$, then  
$$\mcL(L^0)=g_\infty^{-1}\GL_V(\Qq).L_0=g_\infty^{-1}\GL_V(\Af).L_0.$$
In other terms the map \eqref{adeliclatticemap} provides the following identitifation for the commensurability class of $L$:
\begin{equation}\label{adeliccomens}
\mcL(L^0)\simeq\GL_V(\Qq)\bash \GL_V(\Qq)g_{\infty}\GL_V(\Af)/\GL_V(\whZ).	
\end{equation}
By the set on the right-hand side we mean the image under the quotient map to \eqref{localglobalquotientreal} of the set $\GL_V(\Qq)\bash \GL_V(\Qq)g_{\infty}\GL_V(\Af)$.
Note that the copies of $\GL_V(\Q)$ in \eqref{adeliccomens} are embedded differently in $\GL_V(\Aa)$, the left copy is embedded diagonally while the right copy is a priori embedded in $\GL_V(\R)$.
However, the presence of $\GL_V(\Af)$ allows us to take $\GL_V(\Q)$ diagonally embedded in either case.

\begin{remark} This is only interesting for the place $v=\infty$; the lattices in $\mcL(V)$ or $\mcL(V_p)$ are all commensurable.
\end{remark}

\subsection{Quadratic fields and tori}

In this paper, $\field$ will usually denote a quadratic extension of $\Qq$ (in Sections~\ref{sec:CMElliptic}--\ref{sec:Joint equi} always imaginary).
We denote by $\Tr_{\field}$ and $\Norm_{\field}$ the trace and norm on $\field$ and denote in the same way the natural extensions to $\field_v$, $\field(\Q_S)$, $\KAf$ and $\KA$.
Recall that we view the multiplicative group $\fieldt$ of $\field$ as a $\Q$-algebraic group (which is $\res_{\field/\Qq}\Gm$). 
We will usually denote the projective group by
$$\torus_\field=\Res_{\field/\Qq}(\Gm)/\Gm.$$

\subsubsection{Quadratic Orders}\label{sec:quadraticorders}
We write $\order_\field$ for the ring of integer of $\field$.
Recall that a subring $\order\subset\field$ is an order if $\order$ is a $\Zz$-lattice in $\field$.
It follows that $\order\subset\order_K$ and that there is a unique integer $c=c(\order)>0$ (the {\em conductor of }$\order$) such that
$$\order=\Zz+c\order_\field.$$
The discriminant of $\order$ is $\disc(\order)=\det (\Tr_{\field}(e_ie_j))_{i,j=1,2}$ where $(e_1,e_2)$ is any $\Zz$-basis of $\order$. One has
$$\disc(\order)=\disc(\order_\field)c^2.$$

\subsection{Quaternion Algebras}
In this paper $\quat$ will usually denote a quaternion algebra over $\Qq$ (possibly the {\em split }quaternion algebra $\Mat_2$ of $2\times 2$ matrices over $\Qq$). 
Given a (possibly empty) finite set of places $S$ of $\bQ$ with $\av{S}$ even, there is up to $\bQ$-isomorphism a unique quaternion algebra $\quat_S$ ramified exactly at the places in $S$ (see e.g.~\cite[Thm.~14.6.1]{voight}).
Any quaternion algebra over $\Q$ is of this form.
We say that $\quat$ is \emph{definite} if $\quat(\R)$ is isomorphic to the algebra of Hamiltonian quaternions and otherwise we say that it is indefinite. 

Denote by $\Tr_{\quat}$ and $\Norm_{\quat}$ the (reduced) trace and (reduced) norm on $\quat$. 
Let $\quat^0\subset\quat$ be the subspace the quaternions of trace $0$ and let $\quat^1$ be the group the quaternions of norm $1$.

The $\Q$-algebraic group of units $\quat^\times$ acts on the $\Qq$-space $\quat^{0}$ by conjugation. 
The image of $\quat^\times$ in $\GL_{\quat^{0}}$ is isomorphic to the quotient of $\quatt$ by its center and is called the projective group of units of $\quat$. 
We denote it by 
\begin{align*}
\Gm\bash \quatt=\PBt.
\end{align*}
The group $\PBt$ is $\bQ$-almost simple with trivial center and its simply connected cover is the group of norm one units $\quat^1$. These two connected groups are anisotropic over $\bQ$ if and only if $\quat \neq \Mat_2$. 

Notice that the conjugation action preserves the norm form $\Norm_{\quat}$ restricted to $\quat^{0}$, therefore $\PBt\subset \SO_{\Norm_\quat|_{\quat^{0}}}$, and since  these two groups are connected, we have an isomorphism of $\bQ$-groups $$\PBt \cong \SO_{\Norm_\quat|_{\quat^{0}}},$$ see  \cite[Ch.~I, Thm.~3.3]{vigneras}.

Write $\G = \PBt$ for simplicity.
By the above, the homogeneous space $[\G]$ for the group $\G$ has finite volume (i.e.~$\G(\Q)$ is a lattice in $\G(\Aa)$).
We note that if $\quat \neq \Mat_2$, $[\G]$ is in fact compact.

%

\subsubsection{Integral structures}\label{sec:integral_structures}
The choice of an order $\mathcal{O}\subset \quat(\bQ)$ defines an integral structure on $\G = \PBt$ by setting
\begin{align*}
\G(\Z) = \{g \in \G(\bQ): g\mathcal{O}g^{-1} = \mathcal{O}\}
\end{align*}
The integral structure also provides  distinguished local and global compact open subgroups, namely, for any prime $p$ the subgroups $$\G(\Z_p)=\{g \in \G(\Qp): g\mathcal{O}_p g^{-1} = \mathcal{O}_p\}$$ and the product 
$$\G(\widehat{\Z})=\prod_p\G(\Z_p)<\G(\Af).$$
 If $\quat=\Mat_{2}$, we will always choose the integral structure defined by $\mcO=\Mat_{2}(\bZ)$. In this case, $\G(\Z)$ equals $\PGL_{2}(\bZ)$, i.e.~the image of $\GL_{2}(\Z)$ in $\PGL_{2}(\bQ)$.

\subsubsection{Quadratic fields and Quaternions}
Let us recall that a quadratic number field $\field$ embeds in a quaternion algebra $\quat$ if and only if the following local conditions are satisfied:
  \begin{enumerate}[(i)]
    \item If $\quat$ is definite, then $\field$ is imaginary.
    \item If $p$ is a ramified prime of $\quat$, then $p$ is non-split in $\field$.
  \end{enumerate}
This is a consequence of the Hasse-Minkowksi theorem applied to the quadratic form $\Norm_\quat|_{\quat^{0}}$.

Suppose we have such an embedding 
$$\iota:\field\hookrightarrow \quat.$$
Then we have for any $z\in\field$
$$\Tr_\quat(\iota(z))=\Tr_\field(z),\ \Nr_\quat(\iota(z))=\Nr_\field(z).$$
The embedding $\iota$ also induces an embedding for the multiplicative and projective $\Qq$-groups which we denote in the same way by
$$\iota:\fieldt\hookrightarrow \quatt,\ \torus_\field\hookrightarrow \G = \PBt.$$
In fact $\iota(\fieldt)\subset\quatt$ and $\iota(\torus_\field)\subset \PBt$ are the stabilizers in $\quatt$ or $\PBt$ of the line $\Qq.\iota(z_0)\subset \quat^0(\Qq)$ where $z_0\in\field$ is any non-zero element with trace~$0$.

\section{Complex elliptic curves} \label{sec:ComplexElliptic}
Let $\EllC$ be the set of elliptic curves over $\Cc$ up to isomorphism. 
Any elliptic curve $E$ admits a complex uniformization, that is, it is isomorphic as a Riemann surface to a quotient
\begin{equation*}
E\simeq\Cc/\Lambda
\end{equation*}
where $\Lambda\subset \Cc$ is a $\Zz$-lattice.
Conversely, any lattice gives rise to an elliptic curve over $\bC$; cf.~\cite[Chap.~7, \S2]{serre}.
 
Given two elliptic curves $E\simeq \Cc/\Lambda$, $E'\simeq \Cc/\Lambda'$, the set of isogenies from $E$ to $E'$ is given by
$$\Hom(E,E')\simeq \{z\in\Cc: z\Lambda\subset \Lambda'\}.$$ 
In particular,  two curves are isomorphic if and only if $\Lambda'=z\Lambda$ for some $z\in\Ct$ (i.e.~the lattices $\Lambda$ and $\Lambda'$ are $\Ct$-homothetic).
Taking $E'=E$ one has
\begin{equation*}
\End(E)=\{z\in\Cc: z\Lambda\subset\Lambda\}\hbox{ and }\Aut(E)=\End(E)^\times.
\end{equation*}
The endomorphism ring $\End(E)$ is either $\Zz$ (the generic case) or an order $\order$ in an imaginary quadratic field $\field$. In this last case, one says that $E$ has (exact) complex multiplication by $\order$ and we discuss this further in the next section. In any case we set
\begin{align*}
&\End^0(E)=\End(E)\otimes_\Zz\Qq\subset\Cc \text{ and} \\
&\Aut^0(E)=\End^0(E)^\times = \End^0(E) \setminus\{0\}\subset\Ct.
\end{align*}

Let $\mcL(\Cc)$ be the space of $\Z$-lattices in $\Cc$. The map 
\begin{equation}\label{latticeEll}
\Lambda\in\mcL(\Cc)\mapsto E_\Lambda=\Cc/\Lambda
\end{equation}
  induces a bijection 
$$\Ct\bash \mcL(\Cc)\simeq \EllC.$$
Furthermore, any $\Z$-lattice in $\C$ is $\Ct$-homothetic to a lattice of the form $$\Lambda_z=\Zz+\Zz\,z$$
for some $z\in\Hh$ 
where $\Hh=\{z\in\Cc: \Im z>0\}$ is the upper-half plane.
Moreover, two such lattices $\Lambda_z, \Lambda_{z'}$ are homothetic if and only if $z,z'$ are in the same $\SL_2(\Z)$-orbit.

From this we obtain the usual identification
$$\EllC\simeq \SL_2(\Zz)\bash \Hh=Y_0(1)$$
of $\EllC$ with the complex points of the modular curve.

 In the sequel,  we identify $\Cc$ with the Euclidean plane $\Rr^2$ by matching the $\Rr$-basis $(1,\ii)$ of $\Cc$ with the canonical basis $((1,0),(0,1))$ of $\Rr^2$.
 We denote this identification by
 \begin{align}\label{eq:C = R^2}
 \theta_\ii:\Cc\simeq \Rr^2.
 \end{align}
Under this identification, the action of the group $\Ct\simeq\Rr_{>0}\times\mathbb{S}^{1}$ on $\C$ by multiplication corresponds to the standard action of $\R_{>0}\times \SO_2(\R)$ on $\R^2$. 
By \eqref{localglobalquotientreal} we have the adelic description
\begin{align}\nonumber
\Ct\bash\mcL(\Cc)&\simeq \Rr_{>0} \SO_2(\Rr)\bash\mcL(\Rr^2)\\
&\simeq \GL_2(\Qq)\bash \GL_2(\Aa)/\Rr_{>0}\SO_2(\Rr)\GL_2(\whZ).\label{EllCadelic}
\end{align}
Since $\mathbb{A}^\times = \Q^\times \R_{>0}\widehat{\Z}^\times$, $\Ct\bash\mcL(\Cc)$ is  expressed as an adelic quotient of the projective group
\begin{align*}
\Ct\bash\mcL(\Cc)&\simeq  \PSO_2(\Rr)\bash \PGL_2(\Rr)/\PGL_2(\Zz)\\
&\simeq \PGL_2(\Qq)\bash \PGL_2(\Aa)/\PSO_2(\Rr)\PGL_2(\whZ).\end{align*}
Recall here from \S\ref{sec:integral_structures} that $\PGL_2(\whZ)$ denotes the image of $\GL_2(\whZ)$ in $\PGL_2(\Af)$ under the natural projection. We denote the resulting identification
\begin{equation}\psi_\infty:\Ell_\infty\simeq \PGL_2(\Qq)\bash \PGL_2(\Aa)/\PSO_2(\Rr)\PGL_2(\whZ)\label{identcomplexadelic}	
\end{equation}

\subsection{Isogenies}\label{secisogeny}
Given $E=\Cc/\Lambda$ a complex  elliptic curve, let 
\begin{align*}
Y(E)=\{[E']: \Hom(E',E)\not=0\}\subset\EllC
\end{align*}
be the set of isomorphism classes of complex elliptic curves $E'$ isogenous to $E$ (recall that being isogenous is an equivalence relation). 

Given $E'\simeq\Cc/\Lambda'$ isogenous to $E$, there is $z\in\Ct$ such that $z\Lambda'\subset\Lambda$. 
In other terms, $z\Lambda'$ is commensurable with $\Lambda$; cf.~\S \ref{seccommens}. 
Conversely, if $\Lambda'$ is commensurable with $\Lambda$ then $E'\simeq \Cc/\Lambda'$ is isogenous to $E$: given $n\not=0$ such that $n\Lambda'\subset\Lambda$ the map
$$[\times n]:z+\Lambda'\in\Cc/\Lambda'\mapsto nz+n\Lambda'+\Lambda\in\Cc/\Lambda$$
gives an isogeny. Therefore, the map \eqref{latticeEll} gives an identification between $Y(E)$ and $\C^\times$-homothety classes of lattices commensurable to $\Lambda$. 
In other words, it induces a bijection
$$Y(E)\simeq \Ct\bash\Ct\mcL(\Lambda^0)$$
Here, $\Lambda^0=\Lambda\otimes_\Zz\Qq$, see \S \ref{seccommens} for the notation. 
Let $g_\infty\in\GL_2(\Rr)$ be such that $\Lambda=g_\infty^{-1}.\Zz^2$.
In terms of the identifications \eqref{EllCadelic} and \eqref{identcomplexadelic} we have (cf.~\eqref{adeliccomens})
 \begin{equation}\label{isogenyadelicGL}
 Y(E)\simeq \GL_2(\Qq)\bash \GL_2(\Qq)g_\infty\GL_2(\Af)/\Rr_{>0}\SO_2(\Rr)\GL_2(\whZ).	
 \end{equation}
and letting $\ov g_\infty$ denote the projection of $g_\infty$ to $\PGL_2(\Rr)$) we obtain
 \begin{equation}\label{isogenyadelic}
 \psi_{\infty}|_{Y(E)}:
 Y(E)\simeq \PGL_2(\Qq)\bash \PGL_2(\Qq)\ov g_\infty\PGL_2(\Af)/\PSO_2(\Rr)\PGL_2(\whZ).
 \end{equation}
 
\begin{remark}
Since the stabilizer of $\Lambda^0$ in $\Ct$ is $\Aut^0(E)$, we also have
\begin{equation}\label{YEbij}
Y(E)\simeq \Aut^0(E)\bash\mcL(\Lambda^0).	
\end{equation}
\end{remark}

\section{CM elliptic curves}
\label{sec:CMElliptic}

\subsection{Curves with complex multiplication}

Let us recall that given an imaginary quadratic field $\field\subset\Cc$, a complex elliptic curve $E\simeq\Cc/\Lambda$  has complex multiplication (CM) by $\field$ if
\begin{equation}
\End^0(E):=\End(E)\otimes_\Zz\Qq= \field\subset \Cc.
\end{equation} 
In that case the ring of endomorphisms $\End(E)$ is isomorphic to an order $\orderfield$ in $\field$:
\begin{equation}
\End(E)\simeq \ordernumberfield
\end{equation}
and one then says that $E$ has CM by $\orderfield$.

We denote by $\CM_\field$  the set of $\C$-isomorphism classes of elliptic curves $E$ with complex multiplication by $\field$ 
and by $\CM_D$ or $\CM_\ordernumberfield\subset\CM_\field$ the subset consisting of elliptic curves $E$ with CM by $\ordernumberfield$ where $D=\disc(\order)$. 
We have a decomposition as a disjoint union 
$$\CM_\field=\bigsqcup_{\ordernumberfield\subset\field}\CM_{\ordernumberfield}$$
where $\ordernumberfield$ ranges over all the orders of $\field$.

\subsubsection{Explicit description in terms of the complex uniformization}
As a general remark, an elliptic curve $E$ given by its complex uniformization $E(\Cc)=\Cc/\Lambda$
has CM by an order $\order\subset\field$ if and only if
\begin{equation*}
\End(\Lambda)=\{z\in\Cc:\ z\Lambda\subset\Lambda\}= \order.	
\end{equation*}
An example of such an elliptic curve is given by $E_\order(\Cc)=\C/\ordernumberfield$.

\begin{proposition}\label{propCMisog} 
The map \eqref{latticeEll} induces a bijection 
$$\CM_\field\simeq \Kt\bash \mcL(\field)$$
where $\mcL(\field)$ denotes the set of $\Zz$-lattices in the $\Qq$-vector space $\field$ as in \S \ref{sec:lattices}.
In particular, all elliptic curves with CM by $\field$ are isogenous and for any $E\in\CM_\field$ we have
$$\CM_\field=Y(E).$$
\end{proposition}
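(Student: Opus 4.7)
The proof plan is to directly analyze the correspondence between $\C^\times$-homothety classes of lattices and isomorphism classes of elliptic curves, restricted to the CM locus.

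First, I would verify that the assignment $\Lambda \mapsto E_\Lambda = \C/\Lambda$ sends $\mcL(\field)$ into $\CM_\field$. Given $\Lambda \in \mcL(\field)$, compute
\begin{equation*}
\End(\Lambda) = \{z \in \C : z\Lambda \subset \Lambda\}.
\end{equation*}
For any nonzero $\lambda_0 \in \Lambda$, the inclusion $z\lambda_0 \in \Lambda \subset \field$ together with $\lambda_0 \in \field^\times$ forces $z \in \field$. Hence $\End(\Lambda)$ is a subring of $\field$ which is also a $\Z$-lattice (as it is sandwiched between $\Z$ and $\Lambda \cdot \lambda_0^{-1}$), so it is an order in $\field$ and $E_\Lambda$ has CM by $\field$.

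Second, I would show that the induced map descends to $\Kt \backslash \mcL(\field)$ and is injective. If $\Lambda, \Lambda' \in \mcL(\field)$ satisfy $\Lambda' = z\Lambda$ for some $z \in \C^\times$, pick any nonzero $\lambda_0 \in \Lambda$; then $z = (z\lambda_0)/\lambda_0 \in \field^\times = \Kt$ since both $z\lambda_0 \in \Lambda'\subset \field$ and $\lambda_0 \in \field^\times$. Conversely, if $E_\Lambda \simeq E_{\Lambda'}$ then $\Lambda' = z\Lambda$ for some $z \in \C^\times$ by the standard dictionary recalled in Section \ref{sec:ComplexElliptic}, and the same computation shows $z \in \Kt$.

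Third, for surjectivity, let $E \in \CM_\field$ and write $E \simeq \C/\Lambda$ for some $\Lambda \in \mcL(\C)$. By assumption $\End^0(E) = \field$ acts on $\Lambda$ by multiplication in $\C$, so $\Lambda \otimes_\Z \Q$ becomes a nonzero $\field$-module inside $\C$, necessarily of $\field$-dimension one for dimension reasons (both $\field$ and $\Lambda\otimes_\Z \Q$ are $2$-dimensional $\Q$-subspaces of $\C$). Picking any nonzero $\lambda_0 \in \Lambda$, the injection $\field \hookrightarrow \C$, $z \mapsto z\lambda_0$ has image $\field \cdot \lambda_0 = \Lambda \otimes_\Z \Q$, so $\lambda_0^{-1}\Lambda \subset \field$ is a $\Z$-lattice in $\field$ $\C^\times$-homothetic to $\Lambda$. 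This produces a preimage in $\mcL(\field)$.

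Finally, for the ``in particular'' part, any two $\Z$-lattices $\Lambda, \Lambda' \in \mcL(\field)$ satisfy $\Lambda \otimes_\Z \Q = \field = \Lambda' \otimes_\Z \Q$, hence are commensurable in the sense of \S\ref{seccommens}; by the discussion there, the corresponding curves $E_\Lambda, E_{\Lambda'}$ are isogenous, so $\CM_\field \subset Y(E)$ for any $E \in \CM_\field$. The reverse inclusion follows because isogenous curves have the same $\End^0$. The main technical point in all of this is step three, where one must argue that the $\Q$-linear span of $\Lambda$ inside $\C$ coincides with $\field$ — the rest is essentially bookkeeping once the complex uniformization picture is in place.
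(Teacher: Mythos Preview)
Your proof is correct. The ingredients are the same as in the paper, but the logical order is reversed: the paper first establishes the isogeny statement $\CM_\field = Y(E_\order)$ (using the normalization $\Lambda_\omega = \Zz + \Zz\omega$ to land inside $\field$, then showing $\order'\subset\Lambda_\omega$ forces commensurability with $\order$), and only then reads off the bijection by quoting the earlier identification \eqref{YEbij}, namely $Y(E_\order)\simeq \Aut^0(E_\order)\backslash\mcL(\field)=\Kt\backslash\mcL(\field)$. You instead build the bijection $\CM_\field\simeq\Kt\backslash\mcL(\field)$ directly (well-definedness, injectivity, surjectivity) and derive the isogeny claim as a corollary of commensurability in $\mcL(\field)$. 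Your surjectivity step, observing that $\Lambda\otimes_\Zz\Qq$ is a one-dimensional $\field$-module so that $\lambda_0^{-1}\Lambda\subset\field$, is exactly the content of the paper's $\Lambda_\omega$ trick phrased more invariantly. The upshot is that your route is slightly more self-contained (it does not invoke \eqref{YEbij}), while the paper's route makes visible from the outset that $\CM_\field$ is a single isogeny class, which is the organizing principle for the rest of \S\ref{sec:CMElliptic}.
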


\begin{proof}
For any elliptic curve $E \in \CM_{\field}$ with $E = \bC/\Lambda$ the lattice $\Lambda$ is homothetic to a lattice containing an order $\ordernumberfield$ in $\field$. In particular, the latter is commensurable to $\ordernumberfield$.
It follows from Section~\ref{secisogeny} that $E$ is isogenous to $E_\ordernumberfield$.
Conversely, if a lattice $\Lambda$ is homothetic to a lattice in $\field$, it has complex multiplication by an order in $\field$.
\end{proof}

\subsubsection{The Picard group and its action} We recall the structure of $\CM_\order$ for $\order\subset\field$ an order.

\begin{definition}\label{defproperorder} 
A (fractional) proper $\order$-ideal is a lattice $\mfa\subset \field$ such that 
$$\End(\mfa)=\{z\in\field: z\mfa\subset\mfa\}= \ordernumberfield.$$
We denote  the set of proper $\order$-ideals by $\mcI(\order)\subset \mcL(\field)$.
\end{definition}

The group $\fieldt$ acts on $\mcI(\order)$ by multiplication and from Definition~\ref{defproperorder} and the proof of Proposition~\ref{propCMisog}, the restriction of the map \eqref{latticeEll} to $\mcI(\order)$ yields the bijection
\begin{equation}\label{bijCMorder}
\CM_\order\simeq \fieldt\bash \mcI(\order).	
\end{equation}
 
Let us recall that $\order$ is a Gorenstein ring, so the set of proper $\order$-ideals $\mcI(\order)$ is exactly the set of $\order$-modules in $\field$ which are locally free of rank $1$  and so invertible \cite[\S 2.3]{ELMV-Ens}. 
The set $\mcI(\order)$ is therefore stable under multiplication of $\order$-ideals and forms a commutative group whose neutral element is  $\order$. 

\begin{definition}\label{defpicard} 
The quotient $\Pic(\order)=\Kt\bash \mcI(\order)$ is called the {\em Picard group} of $\order$.
\end{definition} 

It is a very classical result that $\Pic(\order)$ is a finite group, see for example \cite[Thm.~2.13, 7.7]{cox}.
The set $\CM_\order$ is in bijection with $\Pic(\order)$; it is not a group a priori but comes very close being one.

 \begin{proposition}\label{propPicaction} 
The set $\CM_\orderfield$ is endowed with a simply transitive action of the Picard group $\Pic(\orderfield)$.	
 \end{proposition}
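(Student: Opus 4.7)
The plan is to transport the group structure on $\Pic(\order_\field)$ through the bijection \eqref{bijCMorder} in order to obtain a simply transitive action on $\CM_{\order_\field}$.

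First, I would assemble the chain of identifications
$$\CM_{\order_\field} \;\overset{\sim}{\longleftrightarrow}\; \fieldt \backslash \mcI(\order_\field) \;=\; \Pic(\order_\field)$$
coming from \eqref{bijCMorder} and Definition~\ref{defpicard}. The paragraph preceding Definition~\ref{defpicard} already records the essential algebraic input, which I would invoke: since $\order_\field$ is Gorenstein, every element of $\mcI(\order_\field)$ is an invertible $\order_\field$-module, so $\mcI(\order_\field)$ forms an abelian group under ideal multiplication with the non-zero principal ideals as a (normal) subgroup. Thus $\Pic(\order_\field)$ inherits a group structure.

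Next I would define the action concretely. Given $[\mfa] \in \Pic(\order_\field)$ and $E \in \CM_{\order_\field}$, choose (via the bijection above) a representative $\Lambda \in \mcI(\order_\field)$ with $E(\C) \simeq \C/\Lambda$ and set
$$[\mfa] \star E \;:=\; [\C/(\mfa^{-1}\Lambda)].$$
Well-definedness is then a routine check: the lattice $\mfa^{-1}\Lambda$ lies in $\mcI(\order_\field)$ by closure of the group, and replacing $\Lambda$ by $z\Lambda$ or $\mfa$ by $w\mfa$ (with $z,w \in \fieldt$) merely scales $\mfa^{-1}\Lambda$ by an element of $\fieldt$, hence does not alter the resulting class in $\CM_{\order_\field}$. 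Associativity $([\mfa][\mfb])\star E = [\mfa]\star([\mfb]\star E)$ is immediate from associativity of ideal multiplication, and $[\order_\field]$ acts trivially.

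To conclude, I would observe that under the identification $\CM_{\order_\field} \simeq \Pic(\order_\field)$ the operation $\star$ corresponds (up to the inversion convention) to the left regular action of the abelian group $\Pic(\order_\field)$ on itself, which is tautologically simply transitive. Explicitly, given $E_i = [\C/\Lambda_i]$ with $\Lambda_i \in \mcI(\order_\field)$ for $i=1,2$, the unique class sending $E_1$ to $E_2$ is $[\Lambda_1 \Lambda_2^{-1}] \in \Pic(\order_\field)$, and this element is well-defined precisely because $\mcI(\order_\field)$ is a group. There is no real obstacle beyond this bookkeeping; the substantive content is the group structure of $\mcI(\order_\field)$, which the paper has already established.
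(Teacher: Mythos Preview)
Your proposal is correct and follows essentially the same route as the paper: define $[\mfa]\star E$ via $\Cc/\mfa^{-1}\Lambda$, check well-definedness using that $\mcI(\order)$ is a group, and read off simple transitivity from the bijection \eqref{bijCMorder} with $\Pic(\order)$. The paper's argument is slightly terser (it verifies transitivity by computing $\mfa\star E_\order=\Cc/\mfa^{-1}$ rather than writing down the general transporter $[\Lambda_1\Lambda_2^{-1}]$), but the content is the same.
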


\begin{proof}
The action of the Picard group is defined as follows. 
Let $E \in \CM_\order$ and by \eqref{bijCMorder} let $\Lambda \in \mcI(\order)$ such that $E(\C) \simeq \C/\Lambda$.
Furthermore, let $\mfa$ be a proper $\order$-ideal and write $\mfa^{-1} \in \mcI(\order)$ for its inverse.
The set $\mfa^{-1}\Lambda$ is in $\mcI(\order)$ and hence is a lattice in $\C$ satisfying 
$\End(\mfa^{-1}\Lambda)=\ordernumberfield$. 
It follows that
\begin{equation}\label{mfastardef}
\mfa\star E:=\Cc/\mfa^{-1}\Lambda	
\end{equation}
has CM by $\order$ and it is easy to check that its isomorphism class depends only on the classes of $\mfa$ and $E$. This action is transitive since
$\mfa\star E_\order= \C/\mfa^{-1}$
and simply transitive by \eqref{bijCMorder}.
\end{proof}

\begin{remark}
For the sequel it will be useful to take note of the isomorphism of $\order$-modules 
$$\Hom(\mfa\star E,E)=\{z\in\Cc:\ z\mfa^{-1}\Lambda\subset\Lambda\}=\mfa$$
where $\order$ acts on the left on $\Hom(\mfa\star E,E)$ via postcomposition by isogenies of $E$. 
Conversely, one verifies that the map $u+\mfa^{-1}\Lambda\mapsto \bigl[z\in\mfa\ra zu+\Lambda\bigr]$
induces a bijection
\begin{equation}\label{serre1}
	\Hom_\order(\mfa,E)\simeq \mfa\star E(\Cc)
\end{equation}
where $\Hom_\order$ denotes the $\order$-module homomorphisms.
\end{remark}

 \subsubsection{The Galois action}
Any elliptic curve $E$ with CM by $\order$ has a model defined over the ring class field $\ringclassfield{\ordernumberfield}$ and the $j$-invariant satisfies
 $\field(j(E))=\ringclassfield{\ordernumberfield}.$
In addition, the Galois and the Picard actions are compatible
in the sense that
 \begin{equation*}
 [\mfa\star E]=[E^{\sigma}]
 \end{equation*}
 where $[\mfa]\in\Pic(\OK)$ and $\sigma=(\frac{\ringclassfield{\ordernumberfield}/\field}{[\mfa]})$ is the Artin symbol from class field theory.
%
 See \cite[p.~293]{SerreCM} for a proof when $\order=\order_\field$ is the full ring of integers and \cite[\S 22]{sutherland} in general.
%

\subsection{The set of CM elliptic curves as an adelic quotient}\label{sec:CM as adelic quot}
We now give an adelic description of the action
$$\Pic(\order)\lact \CM_\order\subset \CM_\field\subset\Ell_\infty.$$
Let $\GL_\field=\End_\Qq(\field)^\times$ be the linear group of the $\Qq$-vector space $\field$. We have an injection $\fieldt\hookrightarrow \GL_\field(\Qq)$ via the multiplicative action of $\fieldt$ on $\field$. By Proposition \ref{propCMisog}  and the local-global principle for lattices, we have
\begin{align*}\label{adelicCMfield}
\CM_\field &\simeq \fieldt \bash \mcL(\field)\simeq \fieldt \bash \GL_{\field}(\Af).\what\order\\
&\simeq \fieldt\bash \GL_{\field}(\Af)/\GL_\field(\whZ).
\end{align*} 
Here $\GL_\field(\whZ)$ is the stabilizer of $\whorder$ under the action of $\GL_\field(\Af)$ on $\KAf$.

By Proposition \ref{propPicaction} we have that the inclusion
$$\CM_\order\subset \CM_\field$$ corresponds to the homothety classes of lattices which are proper $\order$-ideals:
 $$\fieldt\bash\mcI(\order)\subset\fieldt\bash\mcL(\field).$$
  As already discussed the proper $\order$-ideals are precisely the locally free ones: for any $\mfa\in\mcI(\order)$ there is $t_\mathrm{f}\in\KtAf$ such that
\begin{equation}\label{aidele}
\mfa=\field\cap\what\mfa\hbox{ where }\what\mfa= t_\mathrm{f}\what\order.
\end{equation}
Conversely, for any $\what\mfa = t_\rmf \what\order$ the formula \eqref{aidele} defines a proper $\order$-ideal.
Moreover, $t_\mathrm{f}$ is uniquely defined modulo $\what\order^\times$ and therefore
$$\fieldt\bash\mcI(\order)=\Pic(\order)\simeq \fieldt\bash\fieldt(\Af)/\whOrt$$
which gives an adelic description of the group $\Pic(\order)$.

If one prefers to think in terms of homothety classes of lattices one has
$$\fieldt\bash\mcI(\order)\simeq\fieldt\bash\fieldt(\Af).\whorder\simeq \fieldt\bash\fieldt(\Af). \GL_\field(\whZ)/ \GL_\field(\whZ).$$

Indeed, note that by definition of $\GL_\field(\whZ)$ the stabilizer of $\widehat{\order}$ is the stabilizer of the identity coset in $\rquot{\GL_\field(\adelef)}{\GL_\field(\whZ)}$. It is equal to  
	$\KtAf\cap\GL_\field(\whZ)=\whOrt$.
	Thus, the two descriptions are compatible:
	$$\fieldt\bash \fieldt(\Af). \GL_\field(\whZ)/ \GL_\field(\whZ)\simeq \fieldt\bash\fieldt(\Af)/\whOrt.$$	

To summarize, we have the identifications
\begin{align}\label{Picidele1}\CM_\order& \simeq\fieldt\bash \KtAf/\whOrt \simeq \Pic(\order)\\
&\simeq \fieldt\bash\fieldt(\Af). \GL_\field(\whZ)/ \GL_\field(\whZ)\label{Picidele2}
\end{align}
and the action of $\Pic(\order)$ on $\CM_\order$ is realized as follows: 

\begin{proposition}\label{propidentinfty}
	 If  $\mfa\in\mcI(\order)$ and $t_\mathrm{f}\in\fieldt(\Af)$ are related by \eqref{aidele}, the isomorphism class of 
$\mfa\star E_\order$ is represented in \eqref{Picidele1} resp.~\eqref{Picidele2} by the class
$$[t_\mathrm{f}^{-1}]=\fieldt t_\mathrm{f}^{-1}\whOrt
\quad \text{resp.} \quad
\fieldt t_\mathrm{f}^{-1}.\GL_\field(\whZ).$$
\end{proposition}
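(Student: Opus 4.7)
The plan is to unwind the definition of $\mfa\star E_\order$ through the chain of identifications \eqref{bijCMorder} and \eqref{Picidele1}, and then to track what becomes of $t_\mathrm{f}$ under the local-global dictionary \eqref{aidele}.

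First, by \eqref{mfastardef} applied to $E_\order = \C/\order$, we have $\mfa\star E_\order = \C/\mfa^{-1}$. So under the bijection $\CM_\order\simeq \fieldt\bash\mcI(\order)$ of \eqref{bijCMorder}, the class of $\mfa\star E_\order$ corresponds to the $\fieldt$-homothety class of the proper $\order$-ideal $\mfa^{-1}$. It therefore suffices to compute an idelic representative of $\mfa^{-1}$ under the identification $\fieldt\bash\mcI(\order) \simeq \fieldt\bash\fieldt(\Af)/\whOrt$.

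The key point is that the local-global correspondence from \S\ref{sec:lattices} intertwines the formation of inverse ideals with the inversion in $\fieldt(\adelef)$. Indeed, recall (as noted in the text preceding Definition~\ref{defpicard}) that any proper $\order$-ideal is locally free of rank one, so for the given $\mfa$ with $\what\mfa = t_\mathrm{f}\what\order$ we have $\mfa_v = t_{\mathrm{f},v}\,\order_v$ at every prime $v$. Local invertibility then gives $(\mfa_v)^{-1} = t_{\mathrm{f},v}^{-1}\order_v$, and on the other hand $\mfa\cdot\mfa^{-1} = \order$ implies $\mfa_v\cdot(\mfa^{-1})_v = \order_v$, so that $(\mfa^{-1})_v = (\mfa_v)^{-1} = t_{\mathrm{f},v}^{-1}\order_v$. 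Taking the product over all primes we obtain
\begin{equation*}
\widehat{\mfa^{-1}} = t_\mathrm{f}^{-1}\what\order,
\end{equation*}
which by \eqref{aidele} means precisely that $\mfa^{-1}$ is represented idelically by $t_\mathrm{f}^{-1}$.

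Combining these two steps, the class of $\mfa\star E_\order$ is $\fieldt\, t_\mathrm{f}^{-1}\,\whOrt$ in \eqref{Picidele1}, and correspondingly $\fieldt\, t_\mathrm{f}^{-1}\,\GL_\field(\whZ)$ in \eqref{Picidele2} by the compatibility of the two descriptions recalled just after \eqref{Picidele2}. The main (and only real) obstacle is the local inversion identity $(\mfa^{-1})_v = (\mfa_v)^{-1}$, which is not tautological for arbitrary $\order$-submodules of $\field$ but which holds here thanks to the local freeness of proper ideals over the Gorenstein ring $\order$; once this is in hand, the rest is bookkeeping.
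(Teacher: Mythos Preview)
Your proof is correct and follows the same approach as the paper: identify $\mfa\star E_\order$ with the lattice $\mfa^{-1}$ via \eqref{mfastardef}, then invoke \eqref{aidele}. The paper's version is terser, simply declaring the result ``a direct consequence of \eqref{aidele}''; your explicit verification that $\widehat{\mfa^{-1}} = t_\mathrm{f}^{-1}\whorder$ via local freeness fills in exactly the step the paper leaves implicit.
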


\begin{proof}
From the proof of Proposition~\ref{propPicaction} it follows that the homothety class of lattices corresponding to $\mfa\star E_\order$ under the identification $\Ct\bash \mcL(\Cc)\simeq \EllC$ is given by the lattice $\mfa^{-1} \in\mathcal{I}(\order) \subset \mathcal{L}(\field)$.
The statement is thus a direct consequence of \eqref{aidele}.
\end{proof}

\subsection{The map $\red_\infty$}\label{sec:red_infty}
In this section we describe the map $\red_\infty$ adelically. Let us recall that
\begin{align*}
\Ell_\infty &\simeq \GL_2(\Qq)\bash\GL_2(\Aa)/\Rr_{>0}\SO_2(\Rr)\GL_2(\whZ)\\
&\simeq \PGL_2(\Qq)\bash \PGL_2(\Aa)/\PSO_2(\Rr)\PGL_2(\whZ)	
\end{align*}
As we have seen, the elliptic curves with CM by $\field$ 
 correspond to the isogeny class of $E_\order$ and to the sublattices of $\field$
 \begin{equation}\label{identificationCMandLattices}
 \CM_\field=Y(E_\order)\simeq \Ct\bash \Ct\mcL(\field)
 \end{equation}
  where $\field$ is viewed as a $\Qq$-vector space of dimension $2$ inside the $\Rr$-vector space $\C = \R^2$ and the sublattices of $\field$ are the images of one of them (say $\order$) under all the invertible $\Qq$-linear maps on that space (see \S\ref{seccommens}). Concretely, we choose a $\Zz$-basis $\order=\Zz\omega_1+\Zz\omega_2$.
  This choice gives isomorphisms
 \begin{equation}\label{thetadef}
 \theta:\order\to \Zz^2,\quad \theta:\field\to\Qq^2	
 \end{equation}
 as well as  $\Zz$ and $\Qq$-algebra embeddings 
$$\iota:\order=\End(\order)\hookrightarrow \mcO:=\Mat_2(\Zz),\ \iota:\field=\End(\order)^0\hookrightarrow \quat:=\Mat_2(\Qq)$$
satisfying
\begin{itemize}
\item Compatibility: for all $z\in\fieldt$ and  $w\in\field$
$$\theta(z.w)=\iota(z)(\theta(w)).$$
\item Optimality:
 \begin{equation}\label{optimapeq}
 \iota(\field)\cap \mcO=\iota(\ordernumberfield).
 \end{equation}
\end{itemize}
Restricting $\iota$ to $\fieldt$, we obtain an embedding of $\Qq$-algebraic groups
$$\iota:\Res_{\field/\Qq}(\Gm)\hookrightarrow \GL_{2}$$
where we recall that we view $\fieldt = \Res_{\field/\Qq}(\Gm)$.
We denote the image by $\iota(\field^\times)$ for which \eqref{optimapeq} translates into 
$$\iota(\field^\times)(\Af)\cap\GL_2(\whZ)=\iota(\whOrt).$$

\begin{remark} 
If one chooses another basis, the induced embedding $\iota'$ will be $\GL_2(\Q)$-conjugate to $\iota$ and in particular the corresponding torus $\iota'(\Res_{\field/\Qq}(\Gm))$ will be $\GL_2(\Qq)$-conjugate to $\iota(\Res_{\field/\Qq}(\Gm))$.
\end{remark}

Let $g_\infty\in\GL_2(\Rr)$ be the $\Rr$-linear extension of $\theta$ where we have $\field \otimes_{\Q} \R = \C = \R^2$ under \eqref{eq:C = R^2}. 
We have
\begin{equation}\label{conjiota}
\iota(\field^\times)(\Rr)=g_\infty\Rr_{>0}\SO_2(\Rr)g_\infty^{-1}
\end{equation}
as well as $\field = g_{\infty}\Q^2$.
As of \eqref{identificationCMandLattices} we obtain by applying the characterization of commensurability classes in \eqref{adeliccomens} (see also \eqref{isogenyadelicGL})
\begin{equation}\label{CMadele1}
\CM_\field\simeq \GL_2(\Qq)\bash\GL_2(\Qq)g_\infty\GL_2(\Af)/\Rr_{>0}\SO_2(\Rr)\GL_2(\whZ)
\end{equation}
by identifying elliptic curves over $\C$ with homothety classes of lattices $\C$.
The set $\CM_\order$ correspond under this identification to the homothety classes of the lattices $$g^{-1}_\infty.\iota(\fieldt)(\Af).\Zz^2$$
by the adelic description in \S\ref{sec:CM as adelic quot}.
As a subset of \eqref{CMadele1}, $\CM_\order$ corresponds to 
\begin{gather*}
\GL_2(\Qq)g_\infty\iota(\fieldt)(\Af)\Rr_{>0}\SO_2(\Rr)\GL_2(\whZ)	\\
=\GL_2(\Qq)\iota(\fieldt)(\Aa)g_\infty\Rr_{>0}\SO_2(\Rr)\GL_2(\whZ)
\end{gather*}
by \eqref{conjiota}. 
More precisely if $\mfa\in\mcI(\order)$ and $t_\mathrm{f}\in\fieldt(\Af)$ are related by \eqref{aidele}, the homothety class of the lattice $\mfa$ is given by 
$$[\iota(t_\mathrm{f})g_\infty]=\GL_2(\Qq)\iota(t_\mathrm{f})g_\infty\Rr_{>0}\SO_2(\Rr)\GL_2(\whZ).$$

Passing to the projective group, let
$$\iota:\torus_\field = \Res_{\field/\Qq}(\Gm)/\Gm\hookrightarrow \PGL_2$$ be the induced embedding. 
For $t \in \fieldt$ we denote by $\overline{t} \in \torus_\field$ the image.

From \eqref{identcomplexadelic} recall that we denote by $\psi_\infty$ the identification of $\EllC$ with an adelic double quotient of $\PGL_2$.
Summarizing the above, we get the following proposition.

\begin{proposition}\label{propredinfty}Let $\ov g_\infty\in\PGL_2(\Rr)$ be the image of $g_\infty$ under the natural projection. 
Under $\psi_\infty$ the set $\CM_\order$is mapped to
$$\psi_\infty(\CM_\order)
=[\iota(\torus_\field)(\Aa).\ov g_\infty]
=\PGL_2(\Qq)\iota(\torus_\field)(\Aa)\ov g_\infty\PSO_2(\Rr)\PGL_2(\whZ)$$
which is the image of the adelic torus orbit $\PGL_2(\Qq)\iota(\torus_\field)(\Aa)\ov g_\infty$.
Moreover, if $\mfa\in\mcI(\order)$ and $t_\mathrm{f}\in\fieldt(\Af)$ are related by \eqref{aidele}, i.e.~$\what\mfa = t_\mathrm{f}.\what\order$, then
$\red_\infty(\mfa\star E_\order)$ is represented by the class 
$$\psi_\infty(\red_\infty(\mfa\star E_\order))
=[\iota(\ov t_\mathrm{f}^{-1})\ov g_\infty]=\PGL_2(\Qq)\iota(\ov t_\mathrm{f}^{-1})\ov g_\infty\PSO_2(\Rr)\PGL_2(\whZ).$$
\end{proposition}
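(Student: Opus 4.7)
The plan is to trace the lattice representing $\mfa\star E_\order$ through the chain of identifications that define $\psi_\infty$, using the $\Qq$-vector space description of $\CM_\order$ developed in \S\ref{sec:CM as adelic quot}.

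First I would recall from \eqref{mfastardef} that $\mfa\star E_\order=\Cc/\mfa^{-1}$, so under $\EllC\simeq\Ct\bash\mcL(\Cc)$ this element is represented by the $\Ct$-homothety class of the lattice $\mfa^{-1}\subset\field\subset\Cc$. Via $\theta_\ii$ this becomes the real lattice $\theta_\ii(\mfa^{-1})\subset\Rr^2$, and by the defining property of $g_\infty$ as the $\Rr$-linear extension of $\theta$ (under the identification $\field\otimes_\Qq\Rr=\Cc\simeq\Rr^2$), one has $\theta_\ii(\mfa^{-1})=g_\infty^{-1}\theta(\mfa^{-1})$, where $\theta(\mfa^{-1})\subset\Qq^2$ is a $\Qq$-rational lattice.

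Next I would compute the finite-adelic data of $\theta(\mfa^{-1})$. From $\what\mfa=t_\rmf\what\order$ one gets $\what{\mfa^{-1}}=t_\rmf^{-1}\what\order$, and the compatibility $\theta(z\cdot w)=\iota(z)(\theta(w))$ (extended to the adeles) gives $\what{\theta(\mfa^{-1})}=\iota(t_\rmf^{-1})\whZ^2$. Hence $\theta(\mfa^{-1})$ corresponds, in the local-global parameterization $\mcL(\Qq^2)\simeq\GL_2(\Af)/\GL_2(\whZ)$, to the coset of $\iota(t_\rmf^{-1})$. Plugging this into the real-adelic picture \eqref{localglobalquotientreal} (the map $h_\infty h_\rmf\mapsto h_\infty^{-1}h_\rmf\cdot L_0$ with $L_0=\Zz^2$), the real lattice $\theta_\ii(\mfa^{-1})=g_\infty^{-1}\theta(\mfa^{-1})$ corresponds to the class of $g_\infty\cdot\iota(t_\rmf^{-1})$ in $\GL_2(\Qq)\bash\GL_2(\Aa)/\GL_2(\whZ)$. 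Since $g_\infty$ lives at the archimedean place and $\iota(t_\rmf^{-1})$ at the finite places, these commute in $\GL_2(\Aa)$, so after projecting to $\PGL_2$ and passing to the quotient by $\PSO_2(\Rr)\PGL_2(\whZ)$ the image under $\psi_\infty$ is $[\iota(\ov t_\rmf^{-1})\ov g_\infty]$, which is the second claim.

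The first assertion then follows from the second: as $[\mfa]$ ranges over $\Pic(\order)$, the class $[t_\rmf]$ ranges over $\fieldt\bash\fieldt(\Af)/\whOrt$ by \eqref{Picidele1}, so by simple transitivity of the Picard action (Proposition \ref{propPicaction}) the resulting cosets $[\iota(\ov t_\rmf^{-1})\ov g_\infty]$ exhaust $\psi_\infty(\CM_\order)$. Using \eqref{conjiota}, the archimedean factor $\iota(\torus_\field)(\Rr)=\ov g_\infty\PSO_2(\Rr)\ov g_\infty^{-1}$ is absorbed into the stabilizer, so this union may be written as $[\iota(\torus_\field)(\Aa)\ov g_\infty]$. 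The only real difficulty is bookkeeping — carefully tracking the interplay between $\theta_\ii$, $\theta$, $g_\infty$, and $\iota$ at each step.
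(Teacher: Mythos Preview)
Your proposal is correct and follows essentially the same route as the paper: the proposition is stated there as a summary of the preceding discussion in \S\ref{sec:red_infty}, which likewise traces the lattice representing $\mfa\star E_\order$ through the chain $\Ct\bash\mcL(\Cc)\simeq\GL_2(\Qq)\bash\GL_2(\Aa)/\Rr_{>0}\SO_2(\Rr)\GL_2(\whZ)$ and then passes to $\PGL_2$. The only cosmetic differences are that the paper first describes the full set $\CM_\order$ and then the individual element (you do the reverse), and the paper phrases the computation in terms of the lattice $\mfa$ with class $[\iota(t_\rmf)g_\infty]$ rather than $\mfa^{-1}$ with class $[\iota(t_\rmf^{-1})g_\infty]$; the swap $g_\infty\iota(t_\rmf^{-1})\leftrightarrow\iota(t_\rmf^{-1})g_\infty$ is handled in the paper via \eqref{conjiota} rather than by the commutation of archimedean and finite components, but this amounts to the same thing.
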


\section{Supersingular elliptic curves}\label{sec:supersingular}

Recall that an elliptic curve $E_0/\ov k$ defined over an algebraically closed field is \emph{supersingular} if its endomorphism ring $\End(E_0)$ is an order in a quaternion algebra.
In that case, the characteristic of $\ov k$ is a prime, say $p$ and the quaternion algebra 
\begin{equation}\label{Endquat}
\quat=\End(E_0)\otimes \bQ 
\end{equation}
is (isomorphic to) the quaternion algebra $\quat_{\infty,p}$ defined over $\Qq$ ramified exactly at $\infty$ and $p$.
Moreover, $$\mcO:=\End(E_0)\subset \quat$$ is a {\em maximal} order; cf.~\cite{deuring41} or~\cite[Thm.~42.1.9]{voight}. In addition $E_0$ has a model defined over $\Ff_{p^2}$.
In particular, the set of $j$-invariants of such supersingular curves over $\overline{k}$ is finite and so there are only finitely many isomorphism classes.
We denote by $\SS_p$ the (finite) set of isomorphism classes of supersingular elliptic curves over $\overline{\Fp}$.

The first aim of this section is to recall how $\SS_p$ is realized as an adelic quotient. We will show the following.

\begin{proposition}\label{prop:Bpinfty}
Let $\G=\PBt = \quatt/\Gm$ be the projective group of units of $\quat$
and let $\G(\whZ)$  be the image in $\G(\Af)$ of the open compact subgroup $\what{\mcO}^{\times}\subset\quatt(\Af).$ There is a natural identification
\begin{equation}\label{eq:identification}
 \psi_p: \SS_p\simeq\lrquot{\G(\bQ)}{\G(\adelef)}{\G(\whZ)} \simeq \lrquot{\G(\bQ)}{\G(\adele)}{\G(\bR)\G(\whZ)}.
\end{equation}
\end{proposition}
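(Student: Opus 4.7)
My plan is to realize $\SS_p$ first as the set of left $\mcO$-ideal classes via Deuring's classical correspondence, then translate this into an adelic double coset using the local-global principle for lattices, and finally descend to the projective group $\G$. The second identification in \eqref{eq:identification} will follow from the first by a compactness argument at infinity.

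\textbf{Step 1 (Deuring correspondence).} Fix a supersingular curve $E_0/\overline{\F_p}$ with $\End(E_0)=\mcO$; such an $E_0$ exists because every maximal order of $\quat_{p,\infty}$ is realized as the endomorphism ring of some supersingular curve. Since all supersingular curves over $\overline{\F_p}$ lie in a single isogeny class, for every $E\in \SS_p$ the abelian group
\begin{equation*}
I_E := \Hom(E,E_0)
\end{equation*}
is non-zero. Equipped with the left $\mcO$-action by post-composition and the right $\End(E)$-action by pre-composition, $I_E$ is a locally free left $\mcO$-module of rank one; fixing any auxiliary isogeny $E_0\to E$ embeds $I_E$ as a left fractional $\mcO$-ideal in $\quat$. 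The assignment $E\mapsto [I_E]$ is then a bijection between $\SS_p$ and the set of left $\mcO$-ideal classes, the inverse being $I\mapsto E_0/E_0[I]$ with $E_0[I]:=\bigcap_{\alpha\in I}\ker(\alpha)$.

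\textbf{Step 2 (Adelic reformulation).} By the local-global principle for lattices recalled in \S\ref{sec:lattices} applied to $\mcO$-lattices in $\quat$, the set of left fractional $\mcO$-ideals is identified with $\quatt(\adelef)/\widehat{\mcO}^{\times}$, and principal ideals correspond to the left action of $\quatt(\bQ)$. Thus
\begin{equation*}
\SS_p \;\simeq\; \quatt(\bQ)\bash \quatt(\adelef)/\widehat{\mcO}^{\times}.
\end{equation*}
Hilbert 90 applied over $\bQ$ and over each $\Q_v$ gives $\G(\bQ)=\quatt(\bQ)/\bQ^{\times}$ and $\G(\adelef)=\quatt(\adelef)/\adelef^{\times}$, while $\G(\whZ)$ identifies with the image of $\widehat{\mcO}^{\times}$ in $\G(\adelef)$. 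Since $\bQ$ has class number one, $\adelef^{\times}=\bQ^{\times}\whZ^{\times}\subset \bQ^{\times}\widehat{\mcO}^{\times}$, so the $\adelef^{\times}$-factor hidden in the denominators can be absorbed into the $\quatt(\bQ)$-action on the left, yielding
\begin{equation*}
\quatt(\bQ)\bash \quatt(\adelef)/\widehat{\mcO}^{\times} \;\simeq\; \G(\bQ)\bash \G(\adelef)/\G(\whZ),
\end{equation*}
which is the first identification. For the second, the decomposition $\G(\adele)=\G(\bR)\times\G(\adelef)$ and the compactness of $\G(\bR)$ (coming from $\quat$ being definite at $\infty$) allow one to absorb the $\G(\bR)$-component of $\G(\adele)$ into the right-stabilizer.

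\textbf{Main obstacle.} The principal work is concentrated in Step 1: establishing that $I_E$ is locally free of rank one over $\mcO$ and that $I\mapsto E_0/E_0[I]$ is a genuine two-sided inverse to $E\mapsto I_E$. These classical assertions are most cleanly proved place-by-place: at primes $\ell\neq p$ they reduce to statements about $\ell$-adic Tate modules, while at $p$ one uses the Dieudonné module of the underlying $p$-divisible group. Additional care is needed because $\mcO$ may not be principal, so `$E_0[I]$' must be defined invariantly rather than through generators.
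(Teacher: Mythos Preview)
Your proposal is correct and follows essentially the same route as the paper: first the Deuring correspondence $\SS_p\simeq\Cl(\mcO)$ via $E\mapsto\Hom(E,E_0)$ with inverse $I\mapsto E_0/H(I)$, then the adelic identification $\Cl(\mcO)\simeq\quatt(\Q)\backslash\quatt(\Af)/\widehat{\mcO}^\times$ via local principality of left $\mcO$-ideals (using maximality of $\mcO$), and finally the descent to $\G$ using $\Af^\times=\Q^\times\widehat{\Z}^\times$. The paper outsources your ``main obstacle'' entirely to Waterhouse (kernel ideals for maximal orders), whereas you sketch the Tate/Dieudonn\'e argument directly; otherwise the two proofs are interchangeable.
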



\begin{proposition}[{cf.~\cite[Lem.~42.1.11]{voight}}]\label{propSSisog} The set $\SS_p$ forms a single isogeny class. 
\end{proposition}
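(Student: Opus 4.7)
My plan is to show that $\Hom_{\overline{\bF_p}}(E_1, E_2) \neq 0$ for any $E_1, E_2 \in \SS_p$, which is equivalent to the existence of an isogeny between them. Since every supersingular elliptic curve over $\overline{\bF_p}$ admits a model defined over $\bF_{p^2}$ (cf.~\cite[Ch.~V, Thm.~3.1]{silverman-aec}), I may replace $E_1, E_2$ by $\overline{\bF_p}$-isomorphic curves defined over $\bF_{p^2}$ and work with the $p^2$-power Frobenius.

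The main ingredient I would invoke is Tate's isogeny theorem for elliptic curves over a finite field $\bF_q$: two such curves are $\bF_q$-isogenous if and only if their $q$-power Frobenius endomorphisms share the same characteristic polynomial. For a supersingular $E/\bF_{p^2}$, the Frobenius $\pi_E \in \End(E)$ satisfies a quadratic equation $\pi_E^2 - a\pi_E + p^2 = 0$ with $a \in \{0, \pm p, \pm 2p\}$ by Waterhouse's classification; equivalently, $\bQ(\pi_E)$ is at most a quadratic extension of $\bQ$ inside $\End^0(E) = \quat_{p,\infty}$. Consequently, a fixed power $\pi_E^N$ becomes the integer $\pm p^N$ independently of the curve $E$. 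After base change to $\bF_{p^{2N}}$ and, if needed, replacing $E_2$ by a quadratic twist (which is $\overline{\bF_p}$-isomorphic to $E_2$ hence equivalent for our purposes), I may arrange that the $\bF_{p^{2N}}$-Frobenii of $E_1$ and $E_2$ coincide. Tate's theorem then produces a nonzero $\bF_{p^{2N}}$-rational isogeny $E_1 \to E_2$, which is a fortiori defined over $\overline{\bF_p}$.

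The principal bookkeeping obstacle is handling the several possible Frobenius types in Waterhouse's list consistently while keeping track of which twists realize which Frobenius; this is an elementary but slightly delicate case distinction. An alternative path, fitting more naturally with the adelic framework being developed in the paper, would be to appeal to the connectivity of the supersingular $\ell$-isogeny graph for any prime $\ell \neq p$, which follows from strong approximation for the norm-one group $\quat_{p,\infty}^{1}$; however, this would have to be formulated independently of the adelic identification \eqref{eq:identification} that Proposition~\ref{prop:Bpinfty} is about to establish, so there is some danger of circularity.
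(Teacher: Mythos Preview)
Your argument is correct and rests on the same main ingredient as the paper, namely Tate's isogeny theorem, but the route to ``matching the Frobenii'' is different. You classify the possible Frobenius traces $a\in\{0,\pm p,\pm 2p\}$ over $\bF_{p^2}$ via Waterhouse, observe that in each case $\pi_E$ is $p$ times a root of unity of bounded order, and then pass to a common extension (and possibly a quadratic twist) to force $\pi_{E_1}=\pi_{E_2}$. The paper instead chooses the extension $k=\bF_q$ so that \emph{all} endomorphisms of both $E_1$ and $E_2$ are already defined over~$k$; then the $q$-power Frobenius $\pi$ commutes with the full endomorphism ring, hence is central in $\quat_{p,\infty}$, so $\pi\in\Z$ and automatically $\pi=\sqrt{q}$ for both curves. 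This sidesteps the Waterhouse case analysis and the twist bookkeeping entirely, and it is the reason the paper's proof is only a few lines. Your version is more hands-on and makes the uniform $N$ explicit (one can take $N=12$, in which case $\pi_E^{12}=p^{12}$ on the nose and no twist is needed), which has some expository value.

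One omission: the statement asserts that $\SS_p$ is \emph{exactly} one isogeny class, so one also needs the easy converse that any curve isogenous to a supersingular one is itself supersingular. The paper records this (an isogeny $\varphi:E\to E_0$ gives an injection $\End(E_0)\hookrightarrow\End(E)$ via $\psi\mapsto\hat\varphi\psi\varphi$); you should add a sentence to the same effect.
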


\subsection{Supersingular curves and ideal classes}\label{sec:ideal class}
\begin{definition}
A (fractional) left-$\mathcal{O}$-ideal $I \subset \quat(\Q)$ is a lattice, invariant under multiplication on the left by $\mathcal{O}$. We denote by $\mcI(\mcO)$ the set of left-$\mathcal{O}$-ideals.

Two such ideals $I,J$ are $\quatt$-homothetic if and only if there is some $z\in\quat^\times(\bQ)$ such that $J=Iz$. We denote  by $$\Cl(\mathcal{O})=\mcI(\mcO)/\quat(\Qq)^\times$$ the set of $\quatt$-homothety classes of left-$\mathcal{O}$-ideals and by $h(\mcO)=|\Cl(\mcO)|$ its cardinality.
\end{definition}

Let $E_0$ be a supersingular curve over $\overline{\bF_p}$ and let $\mcO$ and $\quat$ as defined above.

\begin{proposition}\label{prop:correspEll} 
There is a one-to-one correspondence 
$$
\SS_p\simeq \mcI(\mcO)/\quat(\Qq)^\times=\Cl(\mcO).	
$$
\end{proposition}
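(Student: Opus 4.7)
The plan is to construct the Deuring--Serre correspondence $\Phi : \SS_p \to \Cl(\mcO)$ sending $E$ to the class of the left $\mcO$-module of isogenies $\Hom(E, E_0)$, and to write down an explicit inverse. By Proposition~\ref{propSSisog} every $E \in \SS_p$ is isogenous to $E_0$, so $\Hom(E, E_0) \neq 0$; it is naturally a left $\mcO = \End(E_0)$-module by post-composition. After tensoring with $\bQ$ it becomes a left $\quat$-module, and a dimension count (both sides have $\bQ$-dimension $4$) shows it is free of rank one. Choosing a $\quat$-linear isomorphism $\Hom(E, E_0) \otimes \bQ \xrightarrow{\sim} \quat$ identifies $\Hom(E, E_0)$ with a lattice in $\quat$ stable under left multiplication by $\mcO$, hence with a left $\mcO$-ideal $I_E \in \mcI(\mcO)$. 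Any other choice of isomorphism differs from this one by right multiplication by an element of $\quat^\times(\bQ)$, so the class $[I_E] \in \Cl(\mcO)$ is well-defined; clearly it depends only on the isomorphism class of $E$.

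For the inverse I would use Deuring's classical construction. Given $I \in \mcI(\mcO)$, rescale by a positive integer to assume $I \subset \mcO$ and form the finite subgroup scheme
$$E_0[I] = \bigcap_{\alpha \in I} \ker(\alpha) \subset E_0,$$
which is finite since $I$ contains nonzero isogenies of finite degree. Let $E_I := E_0/E_0[I]$ with quotient isogeny $\pi_I : E_0 \to E_I$; this is again a supersingular elliptic curve. Replacing $I$ by $Iz$ for $z \in \quat^\times(\bQ)$ produces an $E_{Iz}$ isomorphic to $E_I$ (one compares universal properties; when $z$ is an isogeny of $E_0$ this is immediate, and the general case reduces to this by clearing denominators), so we obtain a well-defined map $\Psi : \Cl(\mcO) \to \SS_p$.

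It remains to verify $\Phi \circ \Psi = \mathrm{id}$ and $\Psi \circ \Phi = \mathrm{id}$. For the first, one computes directly
$$\Hom(E_I, E_0) \circ \pi_I = \{\alpha \in \mcO : \alpha \text{ vanishes on } E_0[I]\} = I,$$
the first equality being the universal property of the quotient and the second following because any $\alpha \in I$ visibly kills $E_0[I]$, while conversely the kernel of the evaluation map $\mcO \to \prod_{\beta \in I} E_0$ is $I$ itself. For the second identity, given $E \in \SS_p$, the natural evaluation map $E \to \prod_{\varphi \in I_E} E_0$ factors through an isogeny $E_0 \to E$ (dual to any generator of $\Hom(E, E_0) \otimes \bQ$ over $\quat$) whose kernel is precisely $E_0[I_E]$, identifying $E$ with $E_{I_E}$. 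The main technical obstacle is the careful bookkeeping in characteristic $p$, where $E_0[I]$ need not be étale and must be handled as a group scheme rather than a set of points; this is the content of Deuring's theorem. An alternative path, fitting better with the adelic language used later in the paper, is to establish Proposition~\ref{prop:Bpinfty} first by identifying both $\SS_p$ and $\Cl(\mcO)$ with the same adelic double quotient $\G(\bQ) \backslash \G(\adelef) / \G(\widehat{\bZ})$ via the local-global principle for lattices (\S\ref{sec:local-global}) together with the fact that every left ideal of a local maximal order in a quaternion algebra is principal; then Proposition~\ref{prop:correspEll} follows by transport of structure.
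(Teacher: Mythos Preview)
Your approach is essentially the same as the paper's: both construct the forward map via $\Hom(E,E_0)$ realized as a left $\mcO$-ideal and the inverse via $E_I=E_0/H(I)$ with $H(I)=\bigcap_{\alpha\in I}\ker\alpha$, and both defer the hard verification to the literature. The one point the paper makes explicit and you gloss over is the equality $\{\alpha\in\mcO:\alpha\text{ vanishes on }E_0[I]\}=I$: your ``conversely the kernel of the evaluation map $\mcO\to\prod_{\beta\in I}E_0$ is $I$ itself'' is just a restatement of what is to be proved, not an argument. This is exactly the assertion that $I$ is a \emph{kernel ideal}, which holds for all left $\mcO$-ideals precisely because $\mcO$ is \emph{maximal}; the paper cites \cite[Thm.~3.15]{waterhouse} for this, and it is the genuine content behind both $\Phi\circ\Psi=\mathrm{id}$ and $\Psi\circ\Phi=\mathrm{id}$.
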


\begin{proof}[Sketch of Proof]
We briefly recall the principle of the proof and refer to \cite[\S 3]{waterhouse} and \cite[Ch.~42]{voight} for details.

In one direction, the map is given as follows: given $E/\ov\Fp$ a supersingular elliptic curve and $\vphi:E\to E_0$ an isogeny the set $\Hom(E,E_0)$  is a left $\mcO$-module (via post-composition by isogenies of $E_0$) and embeds into $\mcO$ via
$$\psi\in\Hom(E,E_0)\hookrightarrow \psi\hat\vphi\in\mcO$$
where $\hat\vphi:E_0\mapsto E$ is the dual of $\vphi$.
Moreover, its image $I_{E,\vphi}=\Hom(E,E_0)\hat\vphi$ contains $\mcO\vphi\hat\vphi=\deg(\vphi)\mcO$ so $I_{E,\vphi}$ is a left-$\mathcal{O}$-ideal. 
One then checks that its homothety class is independent of the choice of $\vphi$.  

In the reverse direction, given a  left-$\mathcal{O}$-ideal $I \subset \quat(\Q)$, up to multiplying $I$ by some integer $n\not=0$, we may assume that $I\subset \mcO$. 
Let $$H(I)=\bigcap_{\phi\in I}\ker\phi\subset E_0.$$ 
Then $H(I)$ is a finite subgroup scheme and the quotient 
\begin{equation}\label{EIdef}
	E_I=E_0/H(I)
\end{equation}
 is an elliptic curve satisfying 
$\Hom(E_I,E_0)\simeq I.$
This isomorphism and the fact that these two maps are well-defined and inverses of one another (upon taking isomorphism and homothety classes) follow from \cite[Thm.~3.11]{waterhouse}; the key point is that, since $\mcO$ is a maximal order, the $\mcO$-ideals above are {\em kernel ideals} (\cite[Thm.~3.15]{waterhouse}).
\end{proof}


\begin{proof}[Proof of Proposition~\ref{prop:Bpinfty}]
Since $\mcO$ is maximal, any left $\mcO$-ideal $I$ is locally principal (see \cite[Thm.~16.1.3]{voight}): for any prime $\ell$
$$I_\ell:=I\otimes\Zl=\mcO_\ell \beta_\ell$$
for some $\beta_\ell\in\quat(\Qq_\ell)^\times$ uniquely defined modulo (left multiplications by) $\mcO_\ell^\times$ where $\mcO_\ell = \mcO \otimes_\Z \Z_\ell$.
By the local-global principle for lattices, the map
$$\beta_\mathrm{f}=(\beta_\ell)_\ell\in\quatt(\Af)\mapsto (\mcO_\ell\beta_\ell^{-1})_\ell=(I_\ell)_\ell\mapsto I$$ yields a  bijection
\begin{align}\label{eq:O-ideals adelically}
\mcI(\mcO)\simeq \quatt(\Af)/\what\mcO^\times.
\end{align}
Thus, we obtain
$$\Cl(\mcO)\simeq \quatt(\Qq)\bash \quatt(\Af)/\what\mcO^\times.
$$
It follows from $\Af^\times=\Q^\times\widehat{\Zz}^\times$ that
the projection $\quat^\times\to \G$ induces a bijection
\begin{equation}\label{eq:Cl(O) adelically}
\Cl(\mcO)
\simeq
\lrquot{\G(\bQ)}{\G(\adelef)}{\G(\whZ)}\simeq
\lrquot{\G(\bQ)}{\G(\Aa)}{\G(\Rr)\G(\whZ)}
\end{equation}
where the class of $I = \widehat{\mcO}\beta \cap \quat(\Q) \subset \mcO$ is mapped to the class of $\beta^{-1}$. 
Proposition \ref{prop:correspEll} concludes the proof.
\end{proof}

\subsection{A natural measure on $\Cl(\mcO)$} The identification \eqref{eq:identification} provides the finite set $\Cl(\mcO)$ with a natural probability measure $m_{\Cl(\mcO)}$ deduced from the Haar measure $m_{[\G]}$ on $[\G]$. In this section we compute the mass of the various ideal classes $[I]\in\Cl(\mcO)$.

To describe the measure we will need the following notion.

\begin{definition}
For any left-$\mathcal{O}$-ideal $I$ we define its right-order by
\begin{align*}
\mcO_R(I) = \{\beta \in \quat(\Q): I\beta \subseteq I\}.
\end{align*}
\end{definition}

If $I,J$ are two left-$\mathcal{O}$-ideals  in the same homothety class, their right-orders are clearly $\quat^\times(\Q)$-conjugate.
More generally, if 
$$I=\alpha_{\mathrm{f}}\star \mcO
:=\widehat{\mathcal{O}}\alpha_\mathrm{f}^{-1} \cap \quat(\Q)$$
as in \eqref{eq:O-ideals adelically} for some $\alpha_{\mathrm{f}}=(\alpha_q)_q\in \quat^\times(\adelef)$ then
\begin{equation}\label{rightorderconj}
  \mcO_R(I) = \alpha_{\mathrm{f}}\widehat{\mathcal{O}}{\alpha_{\mathrm{f}}}^{-1} \cap \quat(\Q).
\end{equation}
In particular, right-orders are locally maximal hence maximal. 

Notice also that the action by conjugation of
 $\quat^\times(\Af)$ as in \eqref{rightorderconj} is transitive on the set of maximal orders (for every prime $\ell$, $\GL_2(\Q_\ell)$ acts transitively on the maximal orders of $\Mat_2(\Q_\ell)$ and a division algebra over the local field $\Q_\ell$ has only one maximal order). This implies that any maximal order of $\quat(\Q)$ is the right-order of some (possibly non-unique) left-$\mathcal{O}$-ideal.


\begin{lemma}[Mass distribution]\label{lem:massdistribution}
For any $[I]\in\Cl(\mcO)$ we have
$$
    m_{\Cl(\mcO)}([I])
    =\frac{{|\units{\mcO_R(I)}/\{\pm 1\}|^{-1}}}{\sum_{[J] \in \Cl(\mathcal{O})}{|\units{\mcO_R(J)}/\{\pm 1\}|^{-1}}}=\frac{{|\units{\mcO_R(I)}|^{-1}}}{\sum_{[J] \in \Cl(\mathcal{O})}{|\units{\mcO_R(J)}|^{-1}}}.
 $$
\end{lemma}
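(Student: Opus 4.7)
The plan is to compute $m_{\Cl(\mcO)}$ directly as the pushforward of the Haar probability measure $m_{[\G]}$ along the projection
\[
\pi \colon [\G] \twoheadrightarrow [\G]/\G(\R)\G(\widehat{\Z}) \simeq \Cl(\mcO),
\]
using the identification \eqref{eq:Cl(O) adelically}. The mass of a class $[I]\in\Cl(\mcO)$ is the $m_{[\G]}$-mass of the fiber $\pi^{-1}([I])$. Write $I = \mcO\alpha_\mathrm{f}^{-1}$ for some $\alpha_\mathrm{f}\in\quatt(\adelef)$ as in \eqref{eq:O-ideals adelically}, and let $\alpha$ denote its image in $\G(\adele)$ (the archimedean component being trivial). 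Then $\pi^{-1}([I])$ is the image in $[\G]$ of the subset $\alpha\,\G(\R)\G(\widehat{\Z})\subset \G(\adele)$.

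I will first identify the stabilizer. Set
\[
\Gamma_{[I]} := \G(\bQ) \cap \alpha\,\G(\R)\G(\widehat{\Z})\,\alpha^{-1} = \G(\bQ) \cap \alpha_\mathrm{f}\,\G(\widehat{\Z})\,\alpha_\mathrm{f}^{-1},
\]
the second equality being automatic since $\G(\R)$ already occupies the archimedean factor. Taking preimages under $\quatt\to\G$ and invoking \eqref{rightorderconj}, the group $\Gamma_{[I]}$ is the image in $\G(\bQ)$ of $\alpha_\mathrm{f}\widehat{\mcO}^{\times}\alpha_\mathrm{f}^{-1}\cap\quatt(\bQ) = \units{\mcO_R(I)}$. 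Since the only elements of $\Z^\times = \mcO_R(I)^\times\cap\bQ^\times$ are $\pm 1$, this yields the canonical isomorphism
\[
\Gamma_{[I]} \;\cong\; \units{\mcO_R(I)}/\{\pm 1\}.
\]
In particular, $\Gamma_{[I]}$ is finite because $\quat=\quat_{p,\infty}$ is definite (so $\units{\mcO_R(I)}$ sits discretely in the compact set of quaternions of norm $\pm 1$ in $\quat(\R)$).

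Next I compute the volume. For any Haar measure $dg$ on $\G(\adele)$, the restriction of the projection $\G(\adele)\to[\G]$ to $\alpha\,\G(\R)\G(\widehat{\Z})$ is everywhere $|\Gamma_{[I]}|$-to-one (the deck transformations being precisely left-multiplications by $\Gamma_{[I]}$). Therefore
\[
m_{[\G]}\bigl(\pi^{-1}([I])\bigr) = \frac{\vol\bigl(\G(\R)\G(\widehat{\Z})\bigr)}{|\Gamma_{[I]}|\,\vol([\G])}
= \frac{C}{|\units{\mcO_R(I)}/\{\pm1\}|},
\]
where $C=\vol(\G(\R)\G(\widehat{\Z}))/\vol([\G])$ is a constant independent of $[I]$. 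Summing over $[I]\in\Cl(\mcO)$ and using that $m_{\Cl(\mcO)}$ is a probability measure determines $C$, giving the first claimed formula. The second equality in the lemma follows from the identity $|\units{\mcO_R(I)}| = 2\,|\units{\mcO_R(I)}/\{\pm 1\}|$, since the factor of $2$ cancels between numerator and denominator.

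The only subtle points are checking that $\G(\bQ)\cap \alpha\,\G(\R)\G(\widehat{\Z})\alpha^{-1}$ really reduces to the finite-adelic intersection and that the multiplicity is constant along the fiber; both boil down to straightforward bookkeeping with the diagonal embedding and Haar invariance. No genuine obstacle is expected.
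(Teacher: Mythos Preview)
Your proof is correct and follows essentially the same approach as the paper: identify the fiber over $[I]$ with the image of $\alpha_{\mathrm{f}}\,\G(\R)\G(\widehat{\Z})$, compute the stabilizer $\G(\bQ)\cap\alpha_{\mathrm{f}}\G(\widehat{\Z})\alpha_{\mathrm{f}}^{-1}\cong\units{\mcO_R(I)}/\{\pm1\}$ via \eqref{rightorderconj}, and sum to determine the normalizing constant. The only cosmetic difference is that the paper first invokes unimodularity to shift to $[\G(\R)\,\alpha_{\mathrm{f}}\G(\widehat{\Z})\alpha_{\mathrm{f}}^{-1}]$ and then computes the stabilizer of the identity coset, whereas you compute the stabilizer directly at the $\alpha$-coset; the resulting group and volume computation are identical.
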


As noted in the introduction, Eichler's mass formula \eqref{eq:Eichler mass formula} states that
\begin{align*}
\sum_{[J] \in \Cl(\mathcal{O})}\frac{1}{|\units{\mcO_R(J)}/\{\pm 1\}|} 
= \frac{p-1}{12}.
\end{align*}

\begin{proof}
Recall that $\quat$ is definite and therefore $\G(\Rr)$ is compact. 
Write $I=\alpha_{\mathrm{f}}\star\mcO$ for some $\alpha_{\mathrm{f}}\in\quat^\times(\Af)$. 
By definition, $m_{\Cl(\mcO)}([I])$ is the $m_{[\G]}$-measure of the coset 
$$[\alpha_{\mathrm{f}} \G(\Rr)\G(\whZ)]=\G(\Q)\bash\G(\Q)\alpha_{\mathrm{f}} \G(\Rr)\G(\whZ)\subset[\G]$$
or, by invariance of $m_{[\G]}$, equal to $m_{[\G]}([ \G(\Rr)\alpha_{\mathrm{f}}\G(\whZ){\alpha_{\mathrm{f}}}^{-1}])$.
Observe that
\begin{align*}
\widehat{\mcO_R(I)} = \alpha_{\mathrm{f}}\widehat{\mathcal{O}}{\alpha_{\mathrm{f}}}^{-1}
\quad \text{and}\quad
 \alpha_{\mathrm{f}}\G(\whZ){\alpha_{\mathrm{f}}}^{-1}
 = \punitsadelef{\mcO_R(I)}.
\end{align*}
Now notice that the stabilizer of the identity coset under the right-action of $\alpha_{\mathrm{f}}\G(\whZ){\alpha_{\mathrm{f}}}^{-1}$ is
\begin{align*}
\G(\Q) \cap \alpha_{\mathrm{f}}\G(\whZ){\alpha_{\mathrm{f}}}^{-1} = \punits{\mcO_R(I)} := \mcO_R(I)^\times/\{\pm1\}
\end{align*}
Therefore, using unimodularity of $\G(\adele)$ we have
\begin{align*}
m_{\Cl(\mcO)}([I]) = \frac{m_{\G(\adele)}(\G(\Rr)\alpha_{\mathrm{f}}\G(\whZ){\alpha_{\mathrm{f}}}^{-1})}{|\punits{\mcO_R(I)}|}= \frac{m_{\G(\adele)}(\G(\Rr)\G(\whZ))}{|\punits{\mcO_R(I)}|}.
\end{align*}
Summing over all $[I]\in \Cl(\mcO)$ (see \eqref{eq:Cl(O) adelically}) and using that $m_{[\G]}$ is a probability measure, we see that 
$$m_{\G(\adele)}(\G(\Rr)\G(\whZ))^{-1}=\sum_{[I] \in \Cl(\mathcal{O})}\frac{1}{|\punits{\mcO_R(I)}|}.$$
Thus, $m_{\Cl(\mcO)}([I])$ is the expression claimed.
\end{proof}

\subsubsection{Interpretation in terms of elliptic curves}\label{sec:massdistr for supersingular}
By Proposition~\ref{prop:correspEll},
we have an identification
$$\Cl(\mcO)\simeq \SS_p.$$
Let us look at what Lemma~\ref{lem:massdistribution} means in terms of elliptic curves. 
Let $$\{I_0=\mcO, \cdots, I_{h(\mcO)-1}\}$$ be representatives of the various ideal classes in $\Cl(\mcO)$ with $I_i\subset\mcO$. 
For every $i=0,\cdots,h(\mcO)-1$ let $E_i$
be the elliptic curve corresponding to $I_i$ constructed in the proof of Proposition~\ref{prop:correspEll}. 
By \cite[Prop.~3.9]{waterhouse} we have $\End(E_i)\simeq \mcO_R(I_i).$ 
Hence, for any $E\in\SS_p$
$$m_{\SS_p}(E)=\frac{|\Aut(E)|^{-1}}{\sum_{E'\in\SS_p}|\Aut(E')|^{-1}} = \nu_p(E)
$$
where $\nu_p$ is defined in the introduction.

\subsection{Supersingular reduction of CM elliptic curves}
\label{sec:supersingularreduction}

Let $\field$ be an imaginary quadratic field and $\order \subset \field$ an order.
As in the introduction, we fix for any $p$ an embedding $\Qq\hookrightarrow\overline{\Qp}$. 
This choice determines a place of $\ov\Qq$ above $p$ which we denote by $\mfp$. 
For any subextension $k\subset\ov\Qq$ we denote by $\order_{k,\mfp}$ the completion of $\order_k$ at $\mathfrak{p}$ and write $\ovorderp$ for $\order_{\ov\Qq,\mfp}$.

Given $E$ an elliptic curve with CM by $\order$ we may assume that $E$ is defined over the ring class field $\ringclassfield{\order}$ of $\order$ and in particular over $\ov\Qq$. 
By the work  of Serre-Tate \cite[p.~506-507]{SerreTate_goodreduction}, $E$ has potential good reduction everywhere. 
In particular, there is an elliptic curve $E'$ defined over $\ringclassfield{\ordernumberfield}$ with good reduction at $\primeideal$ and isomorphic to $E$ over $\ov\Qq$ (i.e.~an $\ringclassfield{\order}$-form of $E$): there exists a smooth model $\mcE'$ defined over $\order_{\ringclassfield{\order},\mfp}$ whose generic fiber is $E'$ and whose special fiber at $\mfp$ is a certain elliptic curve denoted by $\mcE'\mod\mfp$ and defined over the residue field of $\order_{\ringclassfield{\order},\mfp}$. 
Its $j$-invariant $j(E)$ is an algebraic integer \cite[Ch.~II, \S~6]{silverman-advanced} (this is a consequence of the everywhere potential good reduction of $E$) and we have 
$$j(\mcE'\mod \mfp)=j(E)\mod\mfp.$$
In particular, the isomorphism class of the reduction does not depend on the choice of the form $E'$. We will thus simply speak of {\em the reduction modulo $\mfp$} of the isomorphism class $E\in\CM_\order$ and will sometimes denote the class of $\mcE'\mod\mfp$ by
$E\mod \mfp.$
%

\begin{remark}\label{multiplegood} 
In fact, given $s$ distinct primes $p_1,\ldots,p_s$, $\mfp_1,\ldots,\mfp_s$ a choice of places above these primes as before, and an elliptic curve $E$ with CM by $\order$, there exists $E'$ isomorphic to $E$ over $\ov\Qq$ such that $E'$ has good reduction at every $\mfp_i$ (\cite[Cor.~1, p.~507]{SerreTate_goodreduction}).
\end{remark}

From now on we assume that $E$ has good reduction at $\mfp$ and we denote by $\mcE$ its N\'eron model (which we take over the larger ring $\ovorderp$ to avoid any rationality issues).


\begin{lemma}\label{lem:inert implies supersingular red.}
Let $E \in \CM_\ordernumberfield$. For $p$ inert in $\field$, the elliptic curve $E\mod \mfp$ is supersingular.
\end{lemma}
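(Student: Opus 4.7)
The plan is to argue by contradiction and show that an ordinary reduction would force $p$ to split in $\field$, contradicting the hypothesis that $p$ is inert.

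First, I would invoke the standard fact from Deuring's work (see e.g.~\cite[Ch.~II, Prop.~4.4]{silverman-advanced}) that reduction modulo a place $\mfp$ of good reduction induces an injective ring homomorphism
\begin{equation*}
  \End(E) \hookrightarrow \End(E \mod \mfp).
\end{equation*}
In particular, the order $\order = \order_\field$ embeds in $\End(E \mod \mfp)$. Since $\order$ contains a subring of rank $2$ over $\Z$, the reduction $\overline{E} := E\mod \mfp$ cannot have endomorphism algebra equal to $\Q$; hence, by the classification of endomorphism algebras of elliptic curves over a field of positive characteristic, $\overline{E}$ is either ordinary (with $\End^0(\overline{E})$ an imaginary quadratic field) or supersingular.

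Suppose, aiming for contradiction, that $\overline{E}$ is ordinary. Then $\field' := \End^0(\overline{E})$ is an imaginary quadratic field containing the image of $\field$, and since $[\field:\Q] = [\field':\Q] = 2$ we must have $\field = \field'$. In the ordinary case, the associated $p$-divisible group splits as
\begin{equation*}
  \overline{E}[p^\infty] \;\cong\; \overline{E}[p^\infty]^{\circ} \oplus \overline{E}[p^\infty]^{\mathrm{\acute et}},
\end{equation*}
where both summands have height $1$, and the action of $\End(\overline{E})$ preserves this decomposition. Tensoring with $\Q_p$, the field $\field \otimes_\Q \Q_p$ acts $\Q_p$-linearly on a two-dimensional space preserving a non-trivial decomposition into one-dimensional pieces, which forces $\field \otimes_\Q \Q_p \cong \Q_p \times \Q_p$. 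Equivalently, the rational prime $p$ must split in $\field$.

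This contradicts the assumption that $p$ is inert in $\field$, and therefore $\overline{E}$ must be supersingular. The only delicate point is the injectivity of reduction on endomorphisms and the structural fact about the $p$-divisible group in the ordinary case; both are classical and can simply be cited from \cite{silverman-advanced} (or from Deuring's original work \cite{deuring41}). Alternatively, one can shortcut the whole argument by directly invoking Deuring's reduction theorem, which states that the reduction of a CM elliptic curve at a prime $\mfp$ above $p$ is ordinary if and only if $p$ splits in the CM field, and supersingular otherwise.
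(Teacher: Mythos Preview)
Your proposal is correct. The paper's own ``proof'' is simply a citation to Deuring \cite[p.~203]{deuring41} and the modern reference \cite[Ch.~13, Thm.~12]{lang-elliptic}, with no argument given; your final sentence (``Alternatively, one can shortcut the whole argument by directly invoking Deuring's reduction theorem'') is therefore exactly what the paper does.

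Your longer argument via the connected--\'etale splitting of the $p$-divisible group in the ordinary case is the standard one underlying the cited result and is essentially correct. One small point of phrasing: when you say ``$\field \otimes_\Q \Q_p$ acts $\Q_p$-linearly on a two-dimensional space,'' it is cleaner to avoid naming a module and instead observe directly that the splitting $\overline{E}[p^\infty] \cong \overline{E}[p^\infty]^{\circ} \oplus \overline{E}[p^\infty]^{\mathrm{\acute et}}$ yields an injection of $\Q_p$-algebras $\field \otimes_\Q \Q_p \hookrightarrow \End(\overline{E}[p^\infty])\otimes\Q_p \cong \Q_p \times \Q_p$, which is impossible when $\field \otimes_\Q \Q_p$ is a field (i.e.~when $p$ is inert or ramified). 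This sidesteps any ambiguity about which two-dimensional space you mean, since the $p$-adic Tate module in the ordinary case only has $\Z_p$-rank one.
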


\begin{proof}
This is a result of Deuring \cite[p.~203]{deuring41} and a modern reference is \cite[Ch.~13,~Thm.~12]{lang-elliptic}.
\end{proof}

We assume now that $p$ is inert in $\field$.
Denote the maximal order and the quaternion algebra attached to $E\mod\mfp=\mcE\mod\mfp$ by
\begin{align}\label{eq:def O_E}
\mcO_E=\End(\mcE\mod \mfp)\subset 
\quat_E=\End(\mcE\mod \mfp)\otimes_\Zz\Qq\simeq\quat_{\infty,p}.
\end{align}
We therefore have an identification
\begin{equation}\label{correspEllpCl}
\SS_p\simeq \Cl(\mcO_E) = \mcI(\mcO_E)/ \quatt_E(\Qq)	
\end{equation}
under which the class of $\mcE\mod\mfp$ corresponds to the ideal class  $[\mcO_E]$.

\subsection{Compatibility with the Picard group action}\label{sec:comp Picard action} 
As in the previous section, let $E$ have CM by $\order$ and let $\mcE$ be its N\'eron model.
By general properties of the N\'eron model, the reduction modulo $\mfp$ induces an embedding
$$
	\iota_{E}:\End_{\ov\Qq}(E)\simeq\End_{\ovorderp}(\mcE)=\order \hookrightarrow \End_{\ov\Fp}(E\mod \mfp)=\mcO_E,
$$
and by extension (tensoring with $\Qq$) a $\Qq$-algebra embedding
\begin{equation}\label{iotaEdef}
\iota_{E}:\field\hookrightarrow \quat_E.
\end{equation}

\begin{proposition}\label{propoptimalss} 
Assume that $p$ does not divide the conductor of $\order$. 
The embedding $\iota_{E}$ of $\order$ is optimal, that is, one has
$$\iota_{E}(\field)\cap\mcO_E=\iota_{E}(\order).$$
\end{proposition}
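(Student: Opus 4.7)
My plan is to introduce the \emph{endomorphism-test} order $\order' := \iota_E^{-1}(\mcO_E) \subseteq \field$ and to prove that $\order' = \order$. Since $\mcO_E$ is an order in $\quat_E$, every element of $\iota_E(\order')$ is integral over $\Z$, so $\order'$ consists of algebraic integers of $\field$ and satisfies $\order \subseteq \order' \subseteq \order_\field$; in particular $\order'$ is an honest order in $\field$. Optimality is therefore the equality $\order = \order'$, which, by the local-global principle for orders, reduces to showing $\order_\ell = \order'_\ell$ at every rational prime $\ell$. The cases $\ell = p$ and $\ell \neq p$ will be handled by entirely different arguments.

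At the prime $\ell = p$ the statement is purely local-structural. The hypothesis $p \nmid c(\order)$ forces $\order_p = \order_{\field,p}$; since $p$ is inert in $\field$ (a standing assumption), this is the ring of integers of the unramified quadratic extension $\field_p/\Q_p$. Because $p$ ramifies in $\quat_E \simeq \quat_{\infty,p}$, the localization $\quat_E \otimes_\Q \Q_p$ is a quaternion division algebra over $\Q_p$, and $\mcO_{E,p}$ is its unique maximal order, coinciding with the integral closure of $\Z_p$. The intersection of this maximal order with any subfield equals the ring of integers of that subfield, so applied to $\iota_E(\field_p) \subseteq \quat_E \otimes_\Q \Q_p$ it yields $\iota_E(\field_p) \cap \mcO_{E,p} = \iota_E(\order_{\field,p}) = \iota_E(\order_p)$, and hence $\order'_p = \order_p$.

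At a prime $\ell \neq p$ the argument passes through the Tate module. Using good reduction of $\mcE$ at $\mfp$, the $\ell$-power torsion reduction gives an $\iota_E$-equivariant isomorphism $T_\ell(\mcE) \simeq T_\ell(\mcE \bmod \mfp)$ of $\Z_\ell$-modules, and the faithful action of $\mcO_E$ on the target provides an embedding $\mcO_E \otimes \Z_\ell \hookrightarrow \End_{\Z_\ell}(T_\ell(E))$. Any $\alpha \in \field_\ell$ with $\iota_E(\alpha) \in \mcO_E \otimes \Z_\ell$ therefore satisfies $\alpha \cdot T_\ell(E) \subseteq T_\ell(E)$, so one is reduced to identifying the stabilizer order $\{\alpha \in \field_\ell : \alpha T_\ell(E) \subseteq T_\ell(E)\}$ with $\order_\ell$. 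This rests on the classical fact that, for an elliptic curve whose endomorphism ring is \emph{exactly} $\order$, the Tate module $T_\ell(E)$ is locally free of rank one over $\order \otimes \Z_\ell$.

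The principal obstacle in this plan is the last step at the primes $\ell$ dividing the conductor $c(\order)$: here $\order_\ell$ is strictly contained in the maximal local order $\order_{\field,\ell}$, and \emph{a priori} nothing formal prevents a larger order of $\field_\ell$ from stabilizing the $\Z_\ell$-lattice $T_\ell(E)$. Ruling this out is precisely where the hypothesis $\End(E) = \order$ (as opposed to some larger order) enters essentially; the required invertibility of $T_\ell(E)$ over $\order \otimes \Z_\ell$ is a standard input from the theory of CM abelian varieties, which I would cite as a black box in the write-up rather than re-derive here.
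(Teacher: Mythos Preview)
Your proposal is correct and follows essentially the same route as the paper, which simply cites \cite[Prop.~2.2]{LV} and records the upshot: the embedding is locally optimal at every prime $\ell\neq p$ unconditionally, and at $p$ it is optimal if and only if $\order_p$ is the maximal order of $\field_p$, i.e.~$p\nmid c(\order)$. Your write-up supplies exactly the arguments behind that citation---the division-algebra/unique-maximal-order argument at $p$, and the Tate-module argument at $\ell\neq p$ using that $T_\ell(E)\simeq \Lambda\otimes\Z_\ell$ is locally free of rank one over $\order_\ell$ (a consequence of $\order$ being Gorenstein, so that the proper $\order$-ideal $\Lambda$ is invertible). The ``obstacle'' you flag at primes $\ell\mid c(\order)$ is thus not a genuine gap but precisely this standard input, and your plan to cite it as a black box is appropriate.
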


\begin{proof}
See \cite[Prop 2.2]{LV}. It is proven that the embedding is locally optimal at every prime $\not=p$ and it is optimal at $p$ if and only if $\order_p$ is the maximal order of $\field_p$.
The latter is equivalent to our assumption.
\end{proof}

We have seen that all elliptic curves with CM by $\order$ are of the form (up to isomorphism)  
$\mfa\star E$ where $\mfa\subset\order$ is a proper $\order$-ideal. The next proposition states that this presentation is compatible with reduction modulo $\mfp$.

\begin{proposition}\label{propcompatibility} 
In the identification \eqref{correspEllpCl} the isomorphism class of $\mfa\star E\mod \mfp$ corresponds to the class of the left-$\mcO_E$ module $\mcO_E\,\iota_{E}(\mfa).$
\end{proposition}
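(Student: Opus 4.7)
The plan is to exhibit a canonical isogeny $\phi_\mfa:E\to \mfa\star E$ coming from the CM structure, reduce it modulo $\mfp$, and identify the kernel of the reduction with the subgroup cut out by the ideal $\mcO_E\iota_E(\mfa)$ in the sense of Proposition~\ref{prop:correspEll}. After replacing $\mfa$ by a suitable non-zero integer multiple we may assume $\mfa\subset\order$; this affects neither $[\mfa]\in\Pic(\order)$ nor the $\quat^\times(\Q)$-homothety class of $\mcO_E\iota_E(\mfa)$. Writing $E(\C)=\C/\Lambda$ with $\Lambda\in\mcI(\order)$, one has $\mfa\star E(\C)=\C/\mfa^{-1}\Lambda$ and $\Lambda\subset\mfa^{-1}\Lambda$, and the quotient map gives an isogeny $\phi_\mfa:E\to\mfa\star E$ with kernel $\mfa^{-1}\Lambda/\Lambda = E[\mfa]$, where
$$E[\mfa] \;:=\; \bigcap_{\alpha\in\mfa}\ker\bigl(\alpha\colon E\to E\bigr)$$
is the $\mfa$-torsion subgroup scheme for the $\order$-action on $E$. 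Since this kernel is defined algebraically from the CM action, $\phi_\mfa$ is defined over $\bfH_\order$.

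Next, both $E$ and $\mfa\star E$ admit $\bfH_\order$-forms with good reduction at $\mfp$ by \S\ref{sec:supersingularreduction}. By the N\'eron mapping property, $\phi_\mfa$ extends to a morphism of N\'eron models and reduces modulo $\mfp$ to an isogeny $\tilde\phi_\mfa\colon E_0\to F$, where $E_0=E\mod\mfp$ and $F=(\mfa\star E)\mod\mfp$. Since the reduction of endomorphisms of $E$ is by definition the map $\iota_E\colon\order\hookrightarrow\mcO_E$ from \eqref{iotaEdef}, functoriality of the N\'eron model yields
$$\ker\tilde\phi_\mfa \;=\; E_0[\iota_E(\mfa)] \;:=\; \bigcap_{\alpha\in\mfa}\ker \iota_E(\alpha)$$
as a finite flat subgroup scheme of $E_0$.

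Finally, set $I=\mcO_E\iota_E(\mfa)$, a left $\mcO_E$-ideal. Using $1\in\mcO_E$ together with the obvious inclusion $\ker\iota_E(\alpha)\subset\ker(\beta\iota_E(\alpha))$ for any $\beta\in\mcO_E$, one computes
$$H(I)\;=\;\bigcap_{\phi\in I}\ker\phi\;=\;\bigcap_{\alpha\in\mfa}\ker \iota_E(\alpha)\;=\;E_0[\iota_E(\mfa)].$$
By the inverse direction of the correspondence in the proof of Proposition~\ref{prop:correspEll} (see \eqref{EIdef}), the curve $E_I=E_0/H(I)$ represents the class $[I]\in\Cl(\mcO_E)$ under the identification \eqref{correspEllpCl}. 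Combined with the previous paragraph, $F\cong E_0/\ker\tilde\phi_\mfa\cong E_I$, and hence $\mfa\star E\mod\mfp$ corresponds to $[\mcO_E\iota_E(\mfa)]$, as claimed.

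The main technical subtlety lies in the equality $\ker\tilde\phi_\mfa = E_0[\iota_E(\mfa)]$ as finite flat group schemes in residue characteristic $p$: since $p$ is inert in $\field$, if $\mfa$ has non-trivial $p$-primary part (a power of the prime above $p$ in $\order$, which is well-defined under the hypothesis $p\nmid c(\order)$ of Proposition~\ref{propoptimalss}), then both sides are non-reduced group schemes supported at the supersingular point, so the comparison must be carried out at the level of N\'eron models rather than on geometric points. Granted this standard input from CM reduction theory, the rest of the argument is purely formal.
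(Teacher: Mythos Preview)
Your approach is sound in outline but takes a different route from the paper and leaves the central step unresolved. The paper does not track an explicit isogeny through reduction; instead it invokes Serre's \emph{$\mfa$-transform} (the functor $T\mapsto\Hom_\order(\mfa,\mcE(T))$, representable by an elliptic scheme over $\spec(\ovorderp)$), which realizes $\mfa\star E$ \emph{integrally} and by construction commutes with base change. The identification $\Hom_{\ov\Fp}(\mfa\star\mcE\mod\mfp,\mcE\mod\mfp)\simeq\mcO_E\iota_E(\mfa)$ is then quoted from \cite[Thm.~7.12]{conrad}. The virtue of this route is that no comparison of ``reduction of a kernel'' with ``kernel of a reduction'' is needed: the transform is already defined over the base.

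By contrast, your argument hinges on the equality $\ker\tilde\phi_\mfa=E_0[\iota_E(\mfa)]$, which you flag but do not prove. One inclusion is easy: for each $\alpha\in\mfa$ the factorization $\alpha=\psi_\alpha\circ\phi_\mfa$ reduces to $\iota_E(\alpha)=\tilde\psi_\alpha\circ\tilde\phi_\mfa$, giving $\ker\tilde\phi_\mfa\subset E_0[\iota_E(\mfa)]$. The reverse inclusion, however, requires knowing that $E_0[\iota_E(\mfa)]$ has order exactly $\Nr_\field(\mfa)=\deg\phi_\mfa$, i.e.\ that passing from $\mfa$ to the left ideal $\mcO_E\iota_E(\mfa)$ does not enlarge the kernel scheme. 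This amounts to verifying that $\mcO_E\iota_E(\mfa)$ is a \emph{kernel ideal} of the correct reduced norm, which is precisely the content packaged in the $\mfa$-transform formalism (and in Waterhouse's results cited in the proof of Proposition~\ref{prop:correspEll}). Calling this ``standard input from CM reduction theory'' understates it: it is the substance of the proposition, and the paper's approach via the Serre tensor construction is the standard way to supply it.
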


The proof of this Proposition, which one can find\footnote{The proof is given for $\order=\order_\field$ the maximal order but it carries over to general orders of $\field$ since quadratic orders are Gorenstein rings.} in \cite[Thm.~7.12]{conrad}, is discussed in the section below. 
The main ingredient is a construction due to Serre \cite{SerreCM}: the {\em $\mfa$-transform}. We briefly recall its definition and refer to \cite[\S 7]{conrad}, \cite{giraud} and \cite[Chap.~7]{milne} for more details.

\subsection{The $\mfa$-transform} 

Let $A$ be a (not necessarily commutative) associative ring with unit, $M$ be a projective left $A$-module of finite type and constant rank $r$, $S$ a scheme and $E/S$ a (smooth) elliptic scheme over $S$ on which $A$ acts (on the left) by $S$-isogenies.  
The functor
$$M\star E: T\rightsquigarrow \Hom_A(M,E(T))$$
on the category of schemes over $S$  is representable by an abelian group scheme over $S$ of relative dimension $r$ and equipped with an $A$-action.

\begin{example}\label{atransformex1}
	For $A=\order$, $M=\mfa\in\mcI(\order)$ a proper ideal, $S=\spec(\ovorderp)$, $\mcE/S$ the N\'eron model of an elliptic curve $E\in\CM_\order$ with good reduction at $\mfp$, we have (\cite[Thm.~7.6]{conrad})
$$\mfa\star\mcE(\Cc)=\mfa\star E(\Cc)\simeq \Cc/\Lambda\mfa^{-1}.$$
\end{example}

\begin{example}
Given $E_0\in\SS_p$, 
$$A=\End_{\ov\Fp}(E_0)=\mcO_{E_0}\subset\End^0_{\ov\Fp}(E_0)=\quat_{E_0}$$ 
a maximal order and $I\subset\mcO_{E_0}$ a left $\mcO_{E_0}$-module and $S=\spec(\ov\Fp)$, we have (see \eqref{EIdef})
$$I\star {E_0}=E_I=E_0/H(I).$$
\end{example}

In the case of Example \ref{atransformex1}, after translating the notations of \cite{conrad} into ours, the formula in \cite[Thm.~7.12]{conrad} is the isomorphism of $\mcO_E$-modules
$$\Hom_{\ov{\Fp}}(\mfa\star\mcE\mod\mfp,\mcE\mod\mfp)\simeq\mcO_E\, \iota_{E}(\mfa).$$
This is the claim in Proposition~\ref{propcompatibility}.

\subsection{Compatibility with the Picard group action in adelic terms} In this section we interpret Proposition~\ref{propcompatibility} and therefore the map 
\begin{align*}
\red_p: \CM_\order \to \SS_p,\ E \mapsto E \mod \mfp
\end{align*}
(see \S\ref{sec:supersingularreduction}) in adelic terms.
Here, $\order \subset \field$ is a quadratic order, $p$ is inert in $\field$ (see Lemma~\ref{lem:inert implies supersingular red.}). 
We assume that $p$ does not divide the conductor of $\order$ as in Proposition~\ref{propoptimalss}.

Given $E\in\CM_\order$ we let $\mcO_E$ and $\quat_E$ be the maximal order and the quaternion algebra associated to $E$ as in \eqref{eq:def O_E} and let $\G_E = \PBt_E =\quatt_E/\Gm$ be the projective group. 
Thus, we have an identification
\begin{equation}\label{SSpidentwrtE}
\psi_{E,p}:	\SS_p\simeq \quatt_E(\Qq)\bash\quatt(\Af)/\whO_E^\times\simeq \G_E(\Qq)\bash \G_E(\Af)/\G_E(\whZ)
\end{equation} 
by Proposition \ref{prop:Bpinfty} where the isomorphism class of $E\mod \mfp$ corresponds to the class of the identity element and where $\G_E(\whZ) = \punitsadelef{\mcO_E}$.

The embedding $\iota_E$ in \eqref{iotaEdef} induces an embedding of $\Qq$-algebraic groups
$$\iota_E:\Res_{\field/\Qq}(\Gm)\hookrightarrow \quatt_E$$
and by optimality (Proposition \ref{propoptimalss}) we have $$\iota_E(\whorder ^\times)=\iota_E(\KAf)^\times\cap \what\mcO_E^\times.$$
Projecting to $\G_E$, we obtain a torus embedding 
$$\iota_E:\torus_\field=\Res_{\field/\Qq}(\Gm)/\Gm\to \G_E$$
whose image we denote by $\torus_E$.
By \S\ref{sec:CM as adelic quot} we have
\begin{align}\label{eq:Picasprojdoublequot}
\Pic(\order)&\simeq \fieldt\bash \fieldt(\adelef)/\whorder^\times
\simeq \torus_\field(\Qq)\bash \torus_\field(\Af)/\mathrm{P}\what\order^\times \\ 
&\simeq \torus_E(\Qq)\bash \torus_E(\Af)/\torus_E(\whZ)  \nonumber
\end{align}
where $\torus_E(\whZ):=\torus_E(\Af)\cap\G_E(\whZ)$.

Let $\mfa\in\mcI(\order)$ and $t_\mathrm{f}\in\fieldt(\Af)$ be related by \eqref{aidele} (i.e.~such that
$\what\mfa=t_\mathrm{f}\whorder$) and let $\ov t_\mathrm{f}$ be the image of $t_\mathrm{f}$ in $\torus_\field(\Af)$. 
By Proposition \ref{propcompatibility} the class of $\mfa\star E\mod\mfp$ corresponds to the left $\mcO_E$-module
$$\mcO_E\,\iota_E(\mfa)=\quat_E(\Qq)\cap\what\mcO_E\,\iota_E(t_\mathrm{f}) $$
and therefore to the classes  $[\iota_E(t_\mathrm{f})^{-1}]$ and $[\iota_E(\ov t_\mathrm{f})^{-1}]$ respectively under the identifications in \eqref{SSpidentwrtE}.
To summarize, we have shown that
\begin{align}\label{eq:basepointdep.psi_p}
\psi_{E,p}(\mfa \star E \mod \mfp) = [\iota_E(\overline{t}_\mathrm{f}^{-1})].
\end{align}

\subsubsection{Changing the reference curve}\label{sec:changing ref curve}
A minor point is that the identification \eqref{SSpidentwrtE} is made with respect to the reference curve $E\mod \mfp$ which is a priori varying with $\order$. 
Since $p$ is fixed and the space $\SS_p$ is finite we could resolve the issue by passing to subsequences of orders $\order$ such that all the curves $E_\order$ have a given reduction modulo $\mfp$. However, it is  more natural to keep track of  \eqref{SSpidentwrtE} when we use \eqref{eq:identification} with a fixed reference curve $E_0\in\SS_p$. 

Let $E_0\in\SS_p$ with associated quaternion algebra, maximal order and projective group denoted by $\quat,\ \mcO$ and $\G$ respectively. 

Let $E\in\CM_\order$ and 
$$\vphi:E \mod \mfp\to E_0$$ an isogeny between the two curves. We have an embedding
$$f_{\vphi}:\psi\in\End(E\mod\mfp)=\mcO_E\hookrightarrow 
\frac{1}{\deg(\vphi)}\vphi\psi\hat\vphi\in\quat(\Qq),$$
which extends to an isomorphism of $\Qq$-algebras
$$f_{\vphi}:\quat_E\simeq \quat.$$
This also gives a bijection
$$f_{\vphi}:\G_E(\Qq)\bash\G_E(\Af)/\G_E(\whZ)\simeq \G(\Qq)\bash\G(\Af)/f_{\vphi}(\G_E(\whZ)).$$
The right order of the left $\mcO$-ideal
$$I_{E,\vphi}:=\Hom(E\mod \mfp,E_0)\hat\vphi\subset\mcO = \End(E_0)$$
is the maximal order $f_{\vphi}(\mcO_E)$.
One can check (see the remark below) that its (right) $\quatt(\Qq)$-homothety class does not depend on $\vphi$ and represents (the isomorphism class of) $E\mod \mfp$ under Proposition~\ref{prop:correspEll}.

Let $\beta_{E,\mathrm{f}}\in \quatt(\Af)$ such that 
\begin{align}\label{eq:changing ref curve}
\what I_{E,\vphi}=\what\mcO\beta_{E,\mathrm{f}}^{-1}.
\end{align}
Since the right order of $I_{E,\vphi}$ is $f_{\vphi}(\mcO_E)$ have
$$f_{\vphi}(\what\mcO_E)=\beta_{E,\mathrm{f}}\what\mcO\beta_{E,\mathrm{f}}^{-1}$$
and hence $$f_{\vphi}(\G_E(\whZ))=\beta_{E,\mathrm{f}}\G(\whZ)\beta_{E,\mathrm{f}}^{-1}.$$
From this we deduce that the map
$$\G(\Qq)\alpha_\mathrm{f}f_{\vphi}(\G_E(\whZ))\mapsto 
\G(\Qq)\alpha_\mathrm{f}\beta_{E,\mathrm{f}}\G(\whZ)$$
is a bijection
\begin{equation}\label{bijEE0}
\G(\Qq)\bash\G(\Af)/f_{\vphi}(\G_E(\whZ))\simeq 
\G(\Qq)\bash\G(\Af)/\G(\whZ).	
\end{equation}

\begin{remark}
If we choose a different isogeny $\vphi'$, the Skolem-Noether theorem implies that 
%
the images of $\mcO_E$ by  $f_{\vphi}$ and $f_{\vphi'}$ are conjugate and the modules $I_E$ and $I'_E$ vary accordingly.
\end{remark}
\awnote{skolem noether}

Let $\iota:\field \to \quat$ be the composition of the embedding $\iota_E: \field \to \quat_E$ with $f_\varphi$.
This defines an embedding
\begin{equation}\label{iotapdef}
\iota:\torus_\field\hookrightarrow \G. 
\end{equation}
The optimality property of $\iota_E$ translates into
\begin{align}\label{eq:optimal at p}
\iota(\whorder ^\times)=\iota(\KAf)^\times\cap \beta_{E,\mathrm{f}}\what\mcO^\times\beta_{E,\mathrm{f}}^{-1}.
\end{align}

Combining \eqref{eq:basepointdep.psi_p} with the bijection \eqref{bijEE0} we obtain the analogue of Proposition~\ref{propredinfty} at the place $p$.
 
\begin{proposition}\label{propredp} 
Let $E=E_\order\in\CM_\order$ and $\iota$ the embedding constructed above. In the identification \eqref{eq:identification} the set $\red_p(\CM_\order)$ is represented by the projection of the adelic  torus orbit $\G(\Qq)\iota(\torus_\field)(\Aa)\beta_{E,\mathrm{f}}$. That is,
$$\psi_p(\red_p(\CM_\order))=[\iota(\torus_\field)(\Aa)\beta_{E,\mathrm{f}}]
=\G(\Qq)\iota(\torus_\field)(\Aa)\beta_{E,\mathrm{f}}\G(\Rr)\G(\whZ).$$
More precisely, if $\mfa\in\mcI(\order)$ and $t_\mathrm{f}\in\fieldt(\Af)$ are related by \eqref{aidele}, then $\red_p(\mfa\star E)=\mfa\star E\mod\mfp$ is represented by the class 
$$\psi_p(\red_p(\mfa\star E))=[\iota(\ov t_\mathrm{f}^{-1})\beta_{E,\mathrm{f}}]
=\G(\Qq)\iota(\ov t_\mathrm{f}^{-1})\beta_{E,\mathrm{f}}\G(\Rr)\G(\whZ).$$
\end{proposition}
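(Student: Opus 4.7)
The plan is to combine the identity \eqref{eq:basepointdep.psi_p} with the change-of-base-point bijection \eqref{bijEE0}, carefully tracking how each ingredient transforms.

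First, I would note that the identification $\psi_{E,p}$ in \eqref{SSpidentwrtE} places the reduction $E\mod\mfp$ at the identity coset, and \eqref{eq:basepointdep.psi_p} (a direct adelic rephrasing of Proposition~\ref{propcompatibility} using the description \eqref{eq:O-ideals adelically} of left $\mcO_E$-ideals together with the adelic picture of $\Pic(\order)$ in \eqref{eq:Picasprojdoublequot}) gives
\begin{equation*}
  \psi_{E,p}(\mfa\star E\mod\mfp)=[\iota_E(\ov t_\mathrm{f}^{-1})]
  \in \G_E(\Qq)\bash\G_E(\Af)/\G_E(\whZ).
\end{equation*}

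Next, I would transport this identity to the reference curve $E_0$. By construction, fixing an isogeny $\vphi: E\mod\mfp \to E_0$ produces the $\Qq$-algebra isomorphism $f_\vphi: \quat_E \simeq \quat$ satisfying $\iota = f_\vphi \circ \iota_E$ (cf.~\eqref{iotapdef}), so $f_\vphi$ identifies $\G_E$ with $\G$ and sends the above class to $[\iota(\ov t_\mathrm{f}^{-1})]$ in the double quotient $\G(\Qq)\bash\G(\Af)/f_\vphi(\G_E(\whZ))$. The compact open $f_\vphi(\G_E(\whZ))$ differs from $\G(\whZ)$ because $f_\vphi(\mcO_E)$ is the right order of the ideal $I_{E,\vphi}=\Hom(E\mod\mfp,E_0)\hat\vphi$, which by \eqref{eq:changing ref curve} equals $\beta_{E,\mathrm{f}}\what\mcO\beta_{E,\mathrm{f}}^{-1}$; hence $f_\vphi(\G_E(\whZ))=\beta_{E,\mathrm{f}}\G(\whZ)\beta_{E,\mathrm{f}}^{-1}$. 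Right multiplication by $\beta_{E,\mathrm{f}}$ implements the bijection \eqref{bijEE0}, and the composition of \eqref{bijEE0} with $\psi_{E,p}$ is precisely $\psi_p$. Applying this to our class yields
\begin{equation*}
  \psi_p(\red_p(\mfa\star E))=[\iota(\ov t_\mathrm{f}^{-1})\beta_{E,\mathrm{f}}],
\end{equation*}
proving the ``more precisely'' part.

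Finally, to recover the first (set-theoretic) assertion, I would let $[\mfa]$ range over all of $\Pic(\order)$. Since $\CM_\order=\Pic(\order)\star E$ and the adelic identification \eqref{eq:Picasprojdoublequot} identifies $\Pic(\order)$ with $\torus_\field(\Qq)\bash\torus_\field(\Af)/\mathrm{P}\whorder^\times$, the previous formula shows that $\psi_p(\red_p(\CM_\order))$ equals the image of $\G(\Qq)\iota(\torus_\field)(\Af)\beta_{E,\mathrm{f}}$ in $\G(\Qq)\bash\G(\Aa)/\G(\Rr)\G(\whZ)$. Since $\iota(\torus_\field)(\Rr)\subset\G(\Rr)$ is absorbed into the right compact factor, the orbit may equivalently be written as the projection of $\G(\Qq)\iota(\torus_\field)(\Aa)\beta_{E,\mathrm{f}}$, as claimed. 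The only step that requires real care is verifying that the optimality property \eqref{eq:optimal at p} makes the bookkeeping of compact opens consistent with the diagonal embedding $\iota$ -- this is the main (albeit minor) obstacle.
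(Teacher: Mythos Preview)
Your proposal is correct and follows exactly the approach the paper indicates: the paper's entire argument is the single sentence ``Combining \eqref{eq:basepointdep.psi_p} with the bijection \eqref{bijEE0} we obtain the analogue of Proposition~\ref{propredinfty} at the place $p$,'' and you have simply unpacked those two ingredients carefully. One small remark: the optimality property \eqref{eq:optimal at p} is not actually needed for this proposition itself (it matters later, in \S\ref{sec:diagonalComp}, for matching the compact opens across factors), so your closing caveat is overcautious.
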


\section{Diagonal compatibility}\label{sec:diagonalComp}
In this section we combine Propositions \ref{propredinfty} and \ref{propredp} as already outlined in \S\ref{sec:intro-reductionstep}.
This continues the reduction of Theorem~\ref{thm:main-elliptic} to Theorem~\ref{thm:main-dynamic}.

We fix $p_{1},\ldots,p_{s}$ distinct odd primes and for each $i=1,\ldots,s$ we fix a supersingular elliptic curve $E_{0,i}\in\SS_{p_i}$ in characteristic $p_i$. 
Each $E_{0,i}$ determines a quaternion algebra $\quat_i\simeq \quat_{p_i,\infty}$ and a maximal order $\mcO_i$ in it. 
We denote by $\G_{i}=\PBt_i$ the corresponding projective group of units. Furthermore, we let $\quat_0=\Mat_2$ be the split quaternion algebra, $\mcO_0=\Mat_2(\Zz)$ and $\G_0=\PGL_2$. 
We also define for $i=0,\ldots, s$ the compact subgroups
$$K_i=K_{\infty,i}K_{\mathrm{f},i}\subset\G_i(\Aa)$$
where $K_{\infty,i}=\G_i(\Rr)$ for $i \geq 1$, $K_{\infty,0}=\mathrm{P}\SO_2(\Rr)$ and where
$$K_{\mathrm{f},i}=\G_i(\whZ):=\mathrm{P}\what\mcO_i^\times=\what\mcO_i^\times/\what\Zz^\times.$$
for all $i$. For the product group
$$\G:=\prod_{i=0}^s\G_i$$
we define the compact subgroups
\begin{align*}
&K=K_\infty\Kf = \prod_{i=0}^sK_i \subset\G(\Aa),\\ 
&K_\infty=\prod_iK_{\infty,i},\text{ and } \Kf=\prod_i K_{\mathrm{f},i}.
\end{align*}

The identifications \eqref{identcomplexadelic} for $i=0$ and \eqref{eq:identification} for $i=1,\ldots,s$ combine into an identification
\begin{equation}\label{productidentification}
\psi:\Ell_\infty\times\prod_{i=1}^s\SS_{p_i}\simeq \G(\Qq)\bash \GA/K.
\end{equation}

Let $\field$ be an imaginary quadratic field in which for each $i=1,\ldots,s$ the prime $p_i$ is inert. Let $\order\subset\field$ be an order and assume that the primes $p_1,\ldots,p_s$ do not divide the conductor of $\order$.
The various reduction maps $\red_{p_i}$ for $i=0,\ldots,s$ combine into a multi-reduction map
$$\red_{\bfp}:E\in\CM_\order\mapsto (\red_{p_i} E)_{i=0,\ldots,s}\in \Ell_\infty\times\prod_{i=1}^s\SS_{p_i}.$$
The choice of the CM curve $E_\order\simeq \Cc/\order$ provides a parametrization of $\CM_\order$ by $\Pic(\order)$ via 
$$[\mfa]\mapsto\mfa\star E_\order=E_{\mfa^{-1}}$$
and a map
\begin{equation}\label{mfamap}
[\mfa]\in \Pic(\order)\mapsto \red_\bfp(\mfa\star E_\order)=\red_\bfp(E_{\mfa^{-1}}).	
\end{equation}
In addition, the choice of $E = E_\order$ determines embeddings $\iota_i:\torusK\hookrightarrow \G_i$ (see \eqref{iotapdef}) which combine into a diagonal embedding
\begin{equation}\label{iotatotal}
\iota=(\iota_i)_{i=0,\ldots,s}:\torusK\hookrightarrow \G=\prod_{i=0}^s\G_i
\end{equation}
whose image we denote by
$$\torus_{\iota}:=\iota(\torus_{\field}).$$

We let $\mathrm{g} = \mathrm{g}_\infty \mathrm{g}_\mathrm{f} \in \G(\Aa)$ be given by 
\begin{align}
\rmg_\infty &= (\ov g_\infty,\Id,\cdots,\Id)\in\G(\Rr)\label{eq:rmg real}\\
\rmg_\mathrm{f} &= (\Id, \beta_{\mathrm{f},1},\ldots, \beta_{\mathrm{f},s})\in \G(\Af)\label{eq:rmg fiadelic}
\end{align}
where $g_\infty\in\GL_2(\Rr)$ is defined above \eqref{conjiota} and where $\beta_{\mathrm{f},i}$ for $i\geq 1$ is defined in \eqref{eq:changing ref curve} given the curve $E= E_\order$.
By definition of $\ov g_\infty$ and of $K_\infty$ we have
$$
\torus_{\iota}(\Rr)<\rmg_\infty K_\infty \rmg_\infty^{-1}.	
$$
By \eqref{optimapeq} and \eqref{eq:optimal at p}, the embedding $\iota$ is {\em optimal} in the sense that
$$
\iota(\mathrm{P}\whOrt)=\torus_{\iota}(\Af)\cap \rmg_\mathrm{f} \Kf \rmg_\mathrm{f}^{-1}.	
$$

Combining Proposition \ref{propredinfty} and Proposition \ref{propredp} we obtain that the map \eqref{mfamap} admits the following adelic description:

\begin{proposition}\label{propred mfp}
In the identification \eqref{productidentification}, the set $\red_\bfp(\CM_\order)$ is represented by the projection of the adelic torus orbit $\G(\Qq)\torus_{\iota}(\Aa)\rmg$. That is,
$$\psi(\red_\bfp(\CM_\order))=[\torus_{\iota}(\Aa)\rmg]=\G(\Qq)\torus_{\iota}(\Aa)\rmg K.$$
More precisely, if $\mfa\in\mcI(\order)$ and $t_\mathrm{f}\in\fieldt(\Af)$ are related by \eqref{aidele}, then $\red_\bfp(\mfa\star E)$ is represented by the class 
$$\psi(\red_\bfp(\mfa\star E))=[\iota(\ov t_\mathrm{f}^{-1})\rmg]=\G(\Qq)\iota(\ov t_\mathrm{f}^{-1})\rmg K.$$
\end{proposition}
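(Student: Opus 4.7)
The plan is to deduce this proposition as a product-level repackaging of Propositions~\ref{propredinfty} and~\ref{propredp}; nothing essentially new needs to be proven, and the task is to check that the componentwise data assembled in \eqref{iotatotal}--\eqref{eq:rmg fiadelic} is compatible with the product identification $\psi$.

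First I would note that since $\G = \prod_{i=0}^{s} \G_i$ and $K = \prod_{i=0}^{s} K_i$ split as products, the double quotient $\G(\Qq)\bash\G(\Aa)/K$ factors as $\prod_i \G_i(\Qq)\bash\G_i(\Aa)/K_i$, and the identification $\psi$ in \eqref{productidentification} is by construction the product of $\psi_\infty$ at index $i = 0$ and of the $\psi_{p_i}$ at indices $i \geq 1$. Under this product decomposition, the adelic element $\rmg = \rmg_\infty \rmg_\mathrm{f}$ has $i$-th component equal to $\ov g_\infty$ (supported at the archimedean place) for $i = 0$ and to $\beta_{\mathrm{f},i}$ (supported at the finite places) for $i \geq 1$. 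Since $\iota$ is the diagonal embedding, the $i$-th component of $\iota(\ov t_\mathrm{f}^{-1}) \rmg$ equals $\iota_0(\ov t_\mathrm{f}^{-1}) \ov g_\infty$ at $i = 0$ and $\iota_i(\ov t_\mathrm{f}^{-1}) \beta_{\mathrm{f},i}$ at $i \geq 1$.

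Next, given $[\mfa] \in \Pic(\order)$ with $\what\mfa = t_\mathrm{f}\whorder$, I would invoke Proposition~\ref{propredinfty} at $i=0$ and Proposition~\ref{propredp} at each $i \geq 1$. The latter applies because we have assumed that no $p_i$ divides the conductor of $\order$, so Proposition~\ref{propoptimalss} ensures that the local embedding $\iota_i$ is optimal. These two propositions say exactly that the $i$-th component of $\psi(\red_\bfp(\mfa \star E_\order))$ is represented by the class of $\iota_i(\ov t_\mathrm{f}^{-1}) \rmg_i$ in $\G_i(\Qq)\bash\G_i(\Aa)/K_i$. Assembling these componentwise identifications via the product decomposition of the previous paragraph yields the precise equality $\psi(\red_\bfp(\mfa \star E_\order)) = [\iota(\ov t_\mathrm{f}^{-1}) \rmg]$.

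Finally, for the orbit equality $\psi(\red_\bfp(\CM_\order)) = [\torus_\iota(\Aa) \rmg]$, the inclusion $\subseteq$ is immediate from the precise identification together with the adelic parametrization $\Pic(\order) \simeq \torus_\field(\Qq)\bash\torus_\field(\Af)/\mathrm{P}\whOrt$ recalled in \eqref{eq:Picasprojdoublequot}. For the reverse inclusion I would observe that any class $[\iota(t) \rmg]$ with $t \in \torus_\field(\Aa)$ can be rewritten as $[\iota(\ov t_\mathrm{f}) \rmg]$ for some $\ov t_\mathrm{f} \in \torus_\field(\Af)$: the archimedean factor lies in $\torus_\iota(\Rr)$, which by \eqref{conjiota} at $i=0$ and the compactness of $\G_i(\Rr)$ at $i \geq 1$ is contained in $\rmg_\infty K_\infty \rmg_\infty^{-1}$, and hence is absorbed into $K$ because archimedean and finite factors commute in $\G(\Aa)$. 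The main difficulty, if any, is purely organizational: one has to ensure that the auxiliary base-point choices --- the curve $E_\order$ at $\infty$ together with the reference curves $E_{0,i}$ and the isogenies from $E_\order \mod \mfp_i$ at each $p_i$ --- are all coherent with a single diagonal torus $\torus_\iota$, and this coherence is precisely what the definitions of $\iota$ in \eqref{iotatotal} and of the twists $\beta_{\mathrm{f},i}$ in \eqref{eq:changing ref curve} are designed to enforce.
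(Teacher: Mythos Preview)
Your proposal is correct and follows exactly the paper's approach: the proposition is stated immediately after the sentence ``Combining Proposition~\ref{propredinfty} and Proposition~\ref{propredp} we obtain that the map \eqref{mfamap} admits the following adelic description'', with no further proof given. You have simply made explicit the product decomposition and the absorption of the archimedean component into $K$ (the latter being the inclusion $\torus_\iota(\Rr)<\rmg_\infty K_\infty\rmg_\infty^{-1}$ recorded just before the proposition), which the paper leaves to the reader.
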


\section{Joint equidistribution for CM elliptic curves}\label{sec:Joint equi}
In this section we prove Theorem \ref{thm:main-elliptic} from the introduction assuming Theorem~\ref{thm:main-dynamic}. 
In fact, we prove the following generalisation.

\begin{theorem}\label{thm:main-ellipticorder}
Let $p_1,\ldots,p_s, q_{1},q_{2}$ be distinct odd primes. 
As $D\ra-\infty$  along the set of discriminants of imaginary quadratic orders $\order$ such that
  \begin{enumerate}
  \item every $p_i$ for $i\leq s,$ is inert in $\field=\Q(\sqrt D)$,
    \item $D$ is coprime to $p_i$ for all $i \leq s$, and
    \item $q_{1}$ and $q_{2}$ are split in $\field$ 
  \end{enumerate}
 the push-forward of the counting probability measure on $\CM_{\order}$ by $\mathrm{red}_\bfp$ equidistributes towards the product measure $\nu_\infty\otimes\nu_{p_{1}}\otimes\cdots\otimes\nu_{p_{s}}$.
 \end{theorem}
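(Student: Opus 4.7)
The plan is to reduce Theorem~\ref{thm:main-ellipticorder} to the dynamical Theorem~\ref{thm:main-dynamic} via the adelic dictionary developed in Section~\ref{sec:diagonalComp}. For each order $\order$ satisfying the three hypotheses, Proposition~\ref{propred mfp} supplies a diagonal embedding $\iota = \iota_\order : \torus_\field \hookrightarrow \G = \prod_{i=0}^s \G_i$ together with an element $\rmg = \rmg_\order \in \G(\Aa)$ for which
\[
\psi\bigl(\red_\bfp(\CM_\order)\bigr) = [\torus_\iota(\Aa)\rmg]_{K},
\]
where $K = K_\infty K_\rmf$ and $K_{\rmf,i} = \mathrm{P}\what{\mcO_i}^\times$. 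Moreover, the simple transitivity of the Picard action in Proposition~\ref{propPicaction}, together with the adelic description of that action in \eqref{eq:Picasprojdoublequot}, implies that the counting probability measure on $\CM_\order$ transports under $\psi$ to the Haar probability measure on the toral packet $[\torus_\iota \rmg]_K$.

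Next I would identify the target: the product measure $\nu_\infty \otimes \nu_{p_1} \otimes \cdots \otimes \nu_{p_s}$ pulls back through $\psi$ to the Haar probability measure $m_{[\G]_K}$ on $[\G]_K$. At the archimedean place, this is built into the identification \eqref{identcomplexadelic} of $\Ell_\infty$ with $[\G_0]_{K_0}$, under which the normalized hyperbolic measure $\nu_\infty$ corresponds to the invariant measure. At each finite place $p_i$, the mass formula of Lemma~\ref{lem:massdistribution} together with the computation in \S\ref{sec:massdistr for supersingular} shows that $\nu_{p_i}$ equals $m_{[\G_i]_{K_i}}$. Since each $\G_i$ has no nontrivial $\Q$-character, the product of these invariant measures is the Haar measure on $[\G]_K$.

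It then remains to verify the three hypotheses of Theorem~\ref{thm:main-dynamic} for the sequence $(\torus_{\iota_\order},\rmg_\order)$ indexed by the admissible orders. The splitting condition at $q_1,q_2$ in $\field$ is hypothesis (3) of the theorem. The norm-surjectivity of $K_{\rmf,i} = \mathrm{P}\what{\mcO_i}^\times$ in \eqref{eq:assumption compactopen} holds by Remark~\ref{remeichler}, as each $\mcO_i$ is a maximal and hence Eichler order. The substantive point is that $\disc\bigl([\torus_{\iota_\order}\rmg_\order]\bigr) \to \infty$ as $|\disc(\order)| \to \infty$; this is what I expect to be the main technical step. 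Because $\disc(\order)$ is assumed coprime to each $p_i$ by hypothesis (2), the conductor of $\order$ is coprime to each $p_i$ as well, so Proposition~\ref{propoptimalss} (together with the archimedean optimality \eqref{optimapeq}) yields that $\iota_i$ is locally optimal at every place for each $i = 0,\ldots,s$. Using the definition of the discriminant of a toral packet in \cite[\S 4]{dukeforcubic}, local optimality then implies $\disc([\torus_{\iota_i} g_i]) \asymp |\disc(\order)|$ uniformly in $i$, so the minimum over $i$ also grows like $|\disc(\order)| \to \infty$.

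With these three checks in place, Theorem~\ref{thm:main-dynamic} gives equidistribution of the packets $[\torus_{\iota_\order}\rmg_\order]_{K_\rmf}$ towards Haar on $[\G]_{K_\rmf}$, and averaging against $K_\infty$-invariant continuous test functions produces the analogous equidistribution on $[\G]_K$. Transporting back through $\psi^{-1}$ yields exactly Theorem~\ref{thm:main-ellipticorder}. The hardest step is the discriminant control: one must ensure that, as the order varies through a sequence of increasingly deep conductors, the optimality statement translates into a lower bound for the discriminant of \emph{every} factor of the packet (not merely one of them), which is precisely why the coprimality hypothesis (2) is imposed.
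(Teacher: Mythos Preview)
Your proposal is correct and follows essentially the same route as the paper's proof in Section~\ref{sec:Joint equi}: translate via Proposition~\ref{propred mfp} to adelic toral packets, identify the target measure through Lemma~\ref{lem:massdistribution} and \S\ref{sec:massdistr for supersingular}, then verify the hypotheses of Theorem~\ref{thm:main-dynamic}. The paper handles the discriminant comparison by citing \cite[Prop.~4.1]{ELMV-DukeJ} and \cite[Thm.~5.2(1)]{dukeforcubic} to obtain $\disc([\torus_{\iota_{i,n}}g_{i,n}]) \asymp |D_n|$, which is exactly the optimality-based argument you sketch.
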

 
Given Proposition~\ref{propred mfp}, the proof is essentially an exercise in translating between the classical and the adelic language.

\subsection{Some notations}
\label{sec:proofofarithmeticapplication}
Let $p_1,\cdots,p_s, q_{1},q_{2}$ as above. We consider a sequence of imaginary quadratic orders
$$\order_n\subset\field_n,\ n\geq 1$$
of discriminant $D_n\ra-\infty$
such that $D_n$ satisfies (1)-(3) in Theorem \ref{thm:main-ellipticorder} for every $n$.
Let $$E_n:=E_{\order_n}\simeq \Cc/\order_n.$$ 
In the notation of the previous section, each curve $E_n$ gives rise to a diagonal embedding of $\torus_{\field_n}$
$$\iota_{n}=(\iota_{0,n},\ldots,\iota_{s,n}):\torus_{\field_n}\hookrightarrow\G$$ 
whose image we denote by $\torus_{\iota_n}$. Define $\rmg_n = \rmg_{\infty,n} \rmg_{\mathrm{f},n}$ as in \eqref{eq:rmg real} and \eqref{eq:rmg fiadelic} so that in particular
\begin{equation}\label{globaloptimalrealn}
\torus_{\iota_n}(\Rr)<\rmg_{\infty,n} K_\infty \rmg_{\infty,n}^{-1}.	
\end{equation}
We have the following optimality property
\begin{equation}\label{globaloptimaln}
T_{\iota_n,\rmf}:=\iota(\mathrm{P}\what\order^\times_n)=\torus_{\iota_n}(\Af)\cap \rmg_{\mathrm{f},n}\Kf	\rmg_{\mathrm{f},n}^{-1}.
\end{equation}
Finally, we define the compact subgroup
$$T_{\iota_n}:= \torus_{\iota_n}(\R)\times T_{\iota_n,\rmf}\subset \torus_{\iota_n}(\Aa).$$

\begin{remark}
Our assumption (3) on $q_{1}$ and $q_{2}$ implies that $\torus_{\iota_{n}}$ is split at $q_{1}$ and $q_{2}$.
\end{remark}

\subsection{Proof of Theorem \ref{thm:main-elliptic}}

It is sufficient to show that for any function $f_0\in\mcC_c(\Ell_\infty)$ and any tuple of super-singular elliptic curves $(\ov E_i)_{1\leq i\leq s}\in \prod_{i}\SS_{p_i}$ one has
\begin{multline}\label{classicallimit}
\frac{1}{|\Pic(\order_n)|}\sum_{[\mfa]\in\Pic(\order_n)}f_0(\Cc/\mfa^{-1})\prod_{i\geq 1}\delta_{\ov E_i}(\mfa\star E_n\mod\mfp_i)\\ \ra \nu_\infty(f_0)\prod_{i\geq 1}\nu_{p_i}(\ov E_i).	
\end{multline}
as $n\ra\infty$ where $\delta_{\ov E_i}$ denotes the Dirac function at the point $\ov E_i\in\SS_{p_i}$.
To prove this convergence, we will interpret the left- and the right-hand sides of \eqref{classicallimit} as adelic integrals.

\subsubsection{The right-hand side  of \eqref{classicallimit}} \label{sec:RHS classlim}
Under the identification \eqref{identcomplexadelic} the function $f_0$ correspond to a continuous compactly supported function on $[\G_0]_{K_0}$ or equivalently to a continuous compactly supported function $\tilde f_0$ (say) on $[\G_0]$ which is $K_{0}$-invariant.
In particular,
$$\int_{[\G_0]}\tilde f_0(g_0)dg_0=\nu_\infty(f_0).$$
Similarly, for $i\geq 1$ the Dirac function $\delta_{\ov E_i}$ corresponds to a $K_i=\G_i(\Rr)K_{\rmf,i}$-invariant continuous function on $[\G_i]$ under \eqref{eq:identification}, namely the characteristic function
of the class $$[g_{\rmf,\ov E_i}\G_i(\Rr)K_{\rmf,i}]=[g_{\rmf,\ov E_i}K_{i}]\subset [\G_i]$$
where the class of $g_{\rmf,\ov E_i}$ corresponds to $\ov E_i$ under \eqref{eq:identification}.
By Lemma~\ref{lem:massdistribution} and the discussion in \S\ref{sec:massdistr for supersingular}, we have
$$\int_{[\G_i]}1_{[g_{\rmf,\ov E_i}K_{i}]}(g_i)\, dg_i
=\nu_{p_i}(\ov E_i).$$
The product function
$$f=f_0\times\prod_{i=1}^s\delta_{\ov E_i}:\Ell_\infty\times \prod_{i\geq 1}\SS_{p_i}\ra \Cc$$
corresponds to the $K$-invariant function
$$\tilde f:=\tilde f_0\times\prod_{i\geq 1}1_{[g_{\rmf,\ov E_i}K_{i}]}:[\G]\ra\Cc$$
which satisfies
\begin{equation}\label{averagevalue}
	\int_{[\G]}\tilde f(\rmg)d\rmg= \nu_\infty(f_0)\prod_{i\geq 1}\nu_{p_i}(\ov E_i).
\end{equation}

\subsubsection{The left-hand side  of \eqref{classicallimit}}\label{sec:LHS classlim}
To ease notation, we will not display the index $n$ here.

Because of the obvious bijection
$$\torus_\iota(\Qq)\bash \torus_\iota(\Aa)\simeq \G(\Qq)\bash \G(\Qq)\torus_\iota(\Aa)=[\torus_\iota],$$
we view (by restriction) the shifted function on $[\G]$
$$\rmg.\tilde f(\cdot):t\mapsto \tilde f(t\rmg)$$ 
as a (continuous) function on $[\torus_\iota]$. 
By \eqref{globaloptimaln} and \eqref{globaloptimalrealn} this function is $T_{\iota}$-invariant, that is, it is constant along any set of the shape
$$[t_\mathrm{f}T_{\iota}]=\torus_\iota(\Qq)\bash \torus_\iota(\Q) t_\mathrm{f} T_{\iota}\text{ for } t_\mathrm{f}\in\torus_\iota(\Af).$$ 
Moreover, by invariance of the Haar measure we have
\begin{align*}
\vol_{[\torus_\iota]}([t_\mathrm{f}T_{\iota}])
= \vol_{[\torus_\iota]}([T_{\iota}t_\mathrm{f}])
= \vol_{[\torus_\iota]}([T_{\iota}])
\end{align*}
as $\torus_\iota$ is abelian. Thus, all $T_\iota$-cosets have the same volume.
Hence,
\begin{align*}
\vol_{[\torus_\iota]}([t_\mathrm{f}T_{\iota}])
= \frac{1}{|[\torus_\iota]_{T_{\iota}}|}=\frac{1}{|\Pic(\order)|}
\end{align*}
where in the second equality we used \eqref{eq:Picasprojdoublequot} and \eqref{globaloptimalrealn}.
It follows that
\begin{align*}
\int_{[\torus_{\iota}]}\tilde f(t\rmg)dt&=\frac{1}{|[\torus_\iota]_{T_{\iota}}|}\sum_{[t_\mathrm{f}]\in [\torus_\iota]_{T_{\iota}}}\tilde f(t_\mathrm{f}\rmg)\\ &
=
\frac{1}{|\Pic(\order)|}\sum_{[\mfa]\in\Pic(\order)}f(\red_\bfp(\mfa^{-1}\star E))	\end{align*}
where the last equality follows from Proposition \ref{propred mfp}. 
This is equal to the left-hand side of \eqref{classicallimit}.

Therefore, we have for any $n$
$$\frac{1}{|\Pic(\order_n)|}\sum_{[\mfa]\in\Pic(\order_n)}f_0(\Cc/\mfa^{-1})\prod_{i\geq 1}\delta_{\ov E_i}(\mfa\star E_n\mod\mfp_i)=\int_{[\torus_{\iota_n}]}\tilde f(t\rmg_n)dt$$
which realizes the left-hand side of \eqref{classicallimit} as an adelic integral.

\subsubsection{The convergence claim in \eqref{classicallimit}}
By \S\ref{sec:RHS classlim} and \S\ref{sec:LHS classlim} it remains to show that as $n \to \infty$
\begin{align*}
\int_{[\torus_{\iota_n}]}\tilde f(t\rmg_n)\, dt \to \int_{[\G]}\tilde f(\rmg)\, d\rmg.
\end{align*}
Therefore, Theorem \ref{thm:main-elliptic} will follow from the equidistribution statement in Theorem \ref{thm:main-dynamic} once we have verified all its assumptions.

The condition \eqref{eq:assumption compactopen} is satisfied since $\mcO_i$ is a maximal order in $\quat_i$ for any $i$ (see the more general Lemma \ref{normsurjeichler}). 
It remains to see that the discriminant fulfills
\begin{align*}
\disc([\torus_{\iota_{n}}\rmg_{n}])
=\min_i \disc([\torus_{\iota_{i,n}}g_{i,n}])
\ra\infty
\end{align*}
as $n \to \infty$.
Here, $g_{i,n}$ is the projection of $\rmg_n$ to the $i$-th factor.
The {\em discriminant} of the torus orbit $\disc([\torus_{\iota_{i,n}}g_{i,n}])$ is defined in \cite[\S4]{dukeforcubic}, and one can show that there exists a constant $C>1$ independent of $n$ with
\begin{align*}
C^{-1} |D_n| \leq \disc([\torus_{\iota_{i,n}}g_{i,n}]) \leq C |D_n|.
\end{align*}
using \cite[Prop.~4.1]{ELMV-DukeJ} and \cite[Thm.~5.2(1)]{dukeforcubic} and the fact that $D_n$ is coprime to $p_i$.
Therefore
$$\disc([\torus_{\iota_{n}}\rmg_{n}])\ra\infty.$$
Thus, Theorem \ref{thm:main-dynamic} applies and concludes the proof of Theorem~\ref{thm:main-elliptic}.
\qed


\begin{remark}
Theorem \ref{thm:main-elliptic} can be refined further to provide more precise information by reducing the size of the compact subgroup $$K=\PSO_2(\Rr)\PGL_2(\whZ)\times\prod_{i=1}^s\G_i(\Rr)\G_i(\whZ).$$
For instance, we could replace for each $i=1,\ldots,s$ the full group $K_{\infty,i} = \G_i(\Rr)$ by $K_{\infty,i}=\G_i(\Rr)_{z_i}$, the stabilizer of a line 
$$\Rr z_{i}\subset \quat^0_i(\Rr)$$
in the quadratic space of real trace-zero quaternions on which $\G_i$ act by conjugation.

In that case $\G_i(\Rr)/K_{\infty,i}$ identifies with the ellipsoid of traceless quaternions of norm~$1$
$$\quat_i^{0,1}(\Rr)=\{z\in \quat_i(\Rr):\ \Tr_{\quat_i}(z)=0,\ \Nr_{\quat_i}(z)=1\}.$$
Theorem \ref{thm:main-dynamic} restricted to the $\prod_{i=0}^sK_{\infty,i}\Kf$-invariant functions and applied to sequences of torus orbits $[\torus_{n}\rmg_n]$ for suitable $\rmg_n$ may be interpreted as a joint equidistribution statement as $n\ra\infty$ for  primitive representations of $|D_n|$ by the genus classes of the ternary quadratic spaces $(\quat^0_i,\Nr_{\quat_i})_{i=0,\ldots,s}$. We leave it to the interested reader to work out this interpretation.
\end{remark}

\section{Invariance under the simply connected cover}\label{sec:simplyconnectedinvariance}

In this section we discuss the main step towards the proof of Theorem~\ref{thm:main-dynamic}, namely Theorem \ref{thm:invariance in product}; we briefly review the standard approach.
Let $[\torus_{\iota_n}\rmg_n]$ be a sequence of toral packets and denote by $\mu_n$ the Haar probability measure on $[\torus_{\iota_n}\rmg_n]$.
We analyse possible limits of the measures $\mu_n$.
To this end recall that the space of (Borel) measures $\nu$ on $[\G]$ with $\nu([\G]) \leq 1$ is compact (and metrizable) in the weak${}^\ast$-topology by a theorem of Banach and Alaoglu.
Let $(\mu_{n_k})_k$ be an arbitrary convergent subsequence of the sequence $(\mu_n)_n$. This means that there is a measure $\mu$ on $[\G]$ with $\mu([\G]) \leq 1$ and with
\begin{align*}
\int_{[\G]}f \,\mathrm{d} \mu_{n_k} \to \int_{[\G]} f \,\mathrm{d} \mu
\end{align*}
for all $f \in \cC_c([\G])$. Such a measure $\mu$ is called a weak${}^\ast$-limit of the sequence $(\mu_n)_n$.
To prove Theorem~\ref{thm:main-dynamic} we need to show that any such weak${}^\ast$-limit $\mu$ projects to the Haar measure $m_{[\G]_{\compactopenf}}$ on $[\G]_{\compactopenf}$, or equivalently, that for any $\compactopenf$-invariant function $f \in \cC_c([\G])$ we have   $\int_{[\G]} f \,\mathrm{d} \mu=\int_{[\G]} f \,\mathrm{d} m_{[\G]}$.

Theorem~\ref{thm:invariance in product} establishes that any $\mu$ as above, that is, any weak${}^\ast$-limit of the measures $m_{[\torus_{\iota_n}\rmg_n]}$ 
is a probability measure\footnote{
In other words, there is `no escape of mass'. If $[\G]$ were compact, this would be automatic. } and is invariant under the group of adelic points of the simply connected cover of $\G$.

In Section \ref{sec:ProofMainDyn} we explain how the surjectivity assumption \eqref{eq:assumption compactopen} allows us to deduce Theorem \ref{thm:main-dynamic} from Theorem \ref{thm:invariance in product}.
A key ingredient in the proof of the latter is 
a generalisation of Duke's result \cite{duke88}, cf.~Theorem~\ref{thm:dukethmforquaternionalgebras}, which roughly speaking  adresses Theorem~\ref{thm:invariance in product} in the case when $\G$ has one simple factor.
We then apply the joinings classification theorem of Einsiedler and Lindenstrauss \cite{EL-joining2} to deduce Theorem~\ref{thm:invariance in product}.

\subsection{Invariance for one factor}\label{sec:individual}
Let $\quat$ be a quaternion algebra defined over $\bQ$ (possibly the split algebra $\Mat_2$). Let $\G=\PBt$ be the associated projective group and let $\G(\Aa)^+<\G(\Aa)$ be the image of $\quat^1(\Aa)$ under the canonical projection of the subgroup
$\quat^{1}<\quatt$ of quaternions of norm $1$.
In the context of ergodic theory, $\quat^1$ has a special role as it is generated by unipotents over the algebraic closure.
The following theorem was stated (without proof\footnote{A proof will be given in the forthcoming book \cite{EPSbook}.}) in \cite[Thm.~4.6]{dukeforcubic}:

\begin{theorem}[Invariance in single factors]\label{thm:dukethmforquaternionalgebras}
  Let $\quat$ be a quaternion algebra over $\bQ$  and let $\G = \PBt$.
Let $[\torus_{\iota_n}g_n]$ be a sequence of toral packets where $\iota_{n}:\field_{n}\to\quat(\bQ)$ 
are embeddings of quadratic fields and where $g_{n}\in\G(\adele)$.
Assume that
  \begin{equation*}
    \mathrm{disc}\big([\torus_{\iota_{n}}g_{n}]\big)\to\infty\quad(n\to\infty).
  \end{equation*}
  Then any weak${}^\ast$-limit of the normalized Haar measures on the packets $[\torus_{\iota_{n}}g_{n}]$ is a probability measure invariant under $\G(\adele)^{+}$.
\end{theorem}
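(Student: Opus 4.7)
By the Banach-Alaoglu theorem the sequence $\mu_n$ of normalized Haar measures on the packets $[\torus_{\iota_n}g_n]$ admits weak${}^\ast$-convergent subsequences, so it suffices to show that any such subsequential limit $\mu$ is a probability measure and is right invariant under $\G(\Aa)^+$. Denote by $V\subset L^2([\G])$ the closed subspace of right $\G(\Aa)^+$-invariant functions; the invariance claim amounts to the vanishing $\int f\,d\mu=0$ for every continuous compactly supported $f\in V^\perp$. The plan is to establish these two statements separately: no escape of mass via classical reduction theory, and vanishing of periods via the automorphic spectral decomposition combined with Waldspurger's formula and subconvex bounds on twisted central $L$-values.

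For no escape of mass, when $\quat\neq\Mat_2$ the quotient $[\G]$ is compact and nothing has to be shown. When $\quat=\Mat_2$ the quotient $[\G]$ has a single archimedean cusp; a point of $[\torus_{\iota_n}g_n]$ whose archimedean component has imaginary part greater than $Y$ corresponds to a proper ideal class of the underlying imaginary quadratic order whose reduced binary form of discriminant $D$ primitively represents a positive integer of size $\ll|D|/Y^2$, and a standard divisor-function bound on representations of small integers by primitive binary forms shows that the proportion of such classes tends to $0$ as $\disc([\torus_{\iota_n}g_n])\to\infty$.

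The central step is invariance. By the spectral decomposition of $L^2([\G])$, the orthogonal complement $V^\perp$ is spanned by matrix coefficients of infinite-dimensional irreducible automorphic representations $\pi$ of $\G(\Aa)$, which for $\quat=\Mat_2$ splits into cuspidal and Eisenstein pieces and for $\quat$ non-split is entirely discrete. For $\phi\in\pi$ in such a representation, expanding the toral integral in characters $\chi$ of the finite quotient of $[\torus_{\iota_n}]$ by its stabilizer produces a finite sum of twisted toral periods
\begin{equation*}
P_\chi(\phi,g_n)=\int_{[\torus_{\iota_n}]}\phi(tg_n)\,\chi(t)\,dt,
\end{equation*}
and Waldspurger's formula in its quaternionic adelic form identifies $|P_\chi(\phi,g_n)|^2$, up to an explicit product of local constants, with the ratio $L(1/2,\pi_{\field_n}\otimes\chi)/L(1,\Ad\,\pi)$ of central automorphic $L$-values, where $\pi_{\field_n}$ denotes the base change to $\field_n$. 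A subconvex bound of the shape
\begin{equation*}
L(1/2,\pi_{\field_n}\otimes\chi)\ll_{\pi,\varepsilon}\disc\bigl([\torus_{\iota_n}g_n]\bigr)^{1/2-\delta+\varepsilon}
\end{equation*}
for some absolute $\delta>0$ then forces each $P_\chi(\phi,g_n)$ to decay, whence $\int\phi\,d\mu=0$; the continuous piece in the split case is handled identically via Zagier's period formula for Eisenstein series and subconvex bounds for twists of Hecke $L$-functions by class-group characters.

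The principal obstacle and the real input is the subconvex bound itself: for $\quat=\Mat_2$ it is Duke's theorem together with its strengthenings due to Iwaniec and Duke-Friedlander-Iwaniec, while for a general quaternion algebra it is obtained by transferring via the Jacquet-Langlands correspondence to $\PGL_2$ and then invoking those bounds, or more uniformly via the methods of Harcos-Michel and Michel-Venkatesh. Granted this input, combining the no-escape step with the vanishing of all $V^\perp$-periods shows that $\mu$ is a $\G(\Aa)^+$-invariant probability measure on $[\G]$, as required.
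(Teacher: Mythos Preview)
The paper does not actually prove this theorem: it quotes the statement from \cite[Thm.~4.6]{dukeforcubic}, notes in a footnote that a proof will appear in the forthcoming book \cite{EPSbook}, and in the subsequent remark merely lists the ingredients (Waldspurger's formula, subconvexity bounds, the work of Duke, Duke--Friedlander--Iwaniec, Clozel--Ullmo, Michel--Venkatesh) without carrying out the argument. Your sketch is precisely this standard route---reduction theory for non-escape of mass, spectral decomposition, Waldspurger's period formula relating the squared toral period to a central $L$-value, and a subconvex bound transferred via Jacquet--Langlands---and is correct in outline; there is nothing further to compare against.

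Two small caveats on your write-up. The theorem as stated allows real quadratic $\field_n$ as well; your non-escape paragraph treats only the imaginary case (Heegner points), and the real case (packets of closed geodesics) requires a different high-in-the-cusp estimate. Also, your phrase ``finite quotient of $[\torus_{\iota_n}]$ by its stabilizer'' is slightly garbled: the character group of the compact abelian quotient $[\torus_{\iota_n}]$ is discrete but infinite; what makes the sum finite is that a fixed $K$-finite test vector $\phi$ is invariant under some compact open subgroup, so only characters trivial on the corresponding subgroup of the torus contribute. Neither point affects the validity of the outline.
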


\begin{remark}
Theorem \ref{thm:dukethmforquaternionalgebras} is a broad generalisation of Duke's equidistribution theorems \cite{duke88} and a consequence of deep results by many authors \cite{DFI,Waldspurger,CU,MVIHES}. 
It builds on the theory of automorphic forms and associated $L$-functions, in particular on Waldspurger's formula and on subconvexity bounds. We invite the reader to look at the discussion surrounding \cite[Thm.~4.6]{dukeforcubic} for more details.
\end{remark}

\begin{remark}
For the purpose of proving Theorem \ref{thm:invariance in product}, one can also assume that a fixed prime $q_1$ splits in all the quadratic fields $\field_n$. 
Under this splitting assumption, one can then use (for certain natural sequences of toral packets) the modern presentations of the {\em ergodic method} of Linnik \cite{linnik} (for instance \cite{Linnikthmexpander}) to prove Theorem~\ref{thm:dukethmforquaternionalgebras}. This has been carried out by the last named author in \cite{W-Linnik}.
\end{remark}

\subsection{Joint invariance}\label{sec:joint}
We now upgrade Theorem \ref{thm:dukethmforquaternionalgebras} to multiple factors. 
Let us briefly recall the notation used in Theorem \ref{thm:main-dynamic}.
Let $\quat_{i}$ for $i=0,\ldots,s$ be a finite set of distinct quaternion algebras and let $\G_{i}$ be their of projective groups of units. 
Set $$\G = \G_0 \times \ldots \times \G_s$$
and denote by $\G(\Aa)^+ \subset \G(\Aa)$ the product of the groups $\G_i(\Aa)^+$ for $i=0,\ldots,s$.
Without loss of generality we may and will assume that $\G_0=\PGL_2$. Given a tuple $\iota = (\iota_0,\ldots,\iota_s)$ of embeddings $\iota_{i}:\field\to\quat_{i}$ of a given quadratic field $\field$, we obtain a tuple of morphisms of $\Q$-groups
\begin{align*}
\iota_i:\torus_\field = \Res_{\field/\Q}(\Gm)/\Gm \to \G_i
\end{align*}
We denote by
\begin{equation*}
\torus_\iota = \big\{(\iota_0(t),\ldots,\iota_s(t)): t \in \torus_\field\big\}
\end{equation*}
the diagonally embedded image.

\begin{theorem}[Invariance in the product]\label{thm:invariance in product}
Let $p_1,\ldots,p_s$ and $q_1,q_2$ be distinct odd primes. 
Let $\field_n$ for $n\geq 1$ be a sequence of quadratic fields
such that for every $n$
 \begin{enumerate}
 \item $p_1,\ldots,p_s$ are inert in $\field_n$, and
 	\item $q_1$ and $q_2$ are split in $\field_n$.
 \end{enumerate}
Let $\iota_n=(\iota_{i,n})_{i=0,\ldots,s}$ be a tuple of embeddings $\iota_{i,n}:\field_n\to\quat_i$ and let $\rmg_n \in \G(\adele)$.
If $$\disc([\torus_{\iota_n} \rmg_n]) \to \infty$$ as $n \to \infty$,
then any weak${}^\ast$-limit of the sequence of normalized Haar measures on the packets $[\torus_{\iota_n}\rmg_n]$ is a probability measure invariant under $\G(\adele)^{+}$.
\end{theorem}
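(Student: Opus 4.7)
The plan is to combine the single-factor invariance provided by Theorem~\ref{thm:dukethmforquaternionalgebras} with the classification of joinings of higher rank diagonalizable actions by Einsiedler--Lindenstrauss~\cite{EL-joining2}. The starting point is an arbitrary weak${}^\ast$-limit $\mu$ of the measures $\mu_n = m_{[\torus_{\iota_n}\rmg_n]}$ in the space of Borel measures of total mass at most one on $[\G]$. For $i=0,\ldots,s$, writing $\pi_i:[\G]\to[\G_i]$ for the natural projection and using that $\torus_{\iota_n}$ is diagonally embedded, $(\pi_i)_\ast \mu_n$ coincides with the normalized Haar measure on the single-factor toral packet $[\torus_{\iota_{i,n}}g_{i,n}]$. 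Because $\disc([\torus_{\iota_n}\rmg_n]) = \min_i \disc([\torus_{\iota_{i,n}}g_{i,n}])$, each single-factor discriminant tends to infinity. Applying Theorem~\ref{thm:dukethmforquaternionalgebras} factor-wise will therefore show that each marginal $(\pi_i)_\ast \mu$ is a probability measure on $[\G_i]$ invariant under $\G_i(\Aa)^+$; in particular $\mu$ itself has total mass one.

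Next, I would exploit the splitting hypothesis to produce a higher rank diagonalizable action preserving $\mu$. For each $j\in\{1,2\}$, since $q_j$ splits in every $\field_n$, the local torus $\torus_{\iota_n}(\Qq_{q_j})$ is $\Qq_{q_j}$-split of rank one, sitting diagonally inside each $\G_i(\Qq_{q_j})$. Writing $A_{q_j}$ for this split subtorus, the product $A_{q_1}\times A_{q_2}$ furnishes a rank-two $\Qq_{q_1}\times\Qq_{q_2}$-split torus acting on $\mu$ on the right, which is precisely the higher-rank diagonalizable input required by~\cite{EL-joining2}.

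Viewing $\mu$ as a joining of the $\G_i(\Aa)^+$-invariant probability measures $(\pi_i)_\ast \mu$ on the factors $[\G_i]$, I would then invoke the joinings classification of~\cite{EL-joining2} for the $A_{q_1}\times A_{q_2}$-action. It asserts that any such joining is algebraic: the Haar measure on a closed orbit of some $\Qq$-subgroup $\mathrm{H}\subseteq\prod_i \G_i$ whose projection to each factor is $\G_i$, and such an $\mathrm{H}$ must be of ``diagonal'' form, built out of $\Qq$-isogenies between the simple factors. Since the quaternion algebras $\quat_0,\ldots,\quat_s$ are ramified at pairwise disjoint sets of places, the projective groups $\G_0,\ldots,\G_s$ are pairwise non-isogenous over $\Qq$, so no non-trivial diagonal embedding of simple factors is possible and one is forced into $\mathrm{H}=\prod_i \G_i$. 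This delivers the sought-after $\G(\Aa)^+$-invariance of $\mu$.

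The hard part will be applying the joinings classification of~\cite{EL-joining2} correctly in the present adelic, $S$-arithmetic setting and verifying the rigidity hypotheses (ergodicity and positive entropy along the split directions) needed to exclude non-trivial intermediate joinings. In particular, dropping either $q_1$ or $q_2$ would collapse the acting torus to rank one, so the two-fold splitting assumption is precisely what triggers the rigidity; a secondary technical point is to confirm that the rank-two action on $\mu$ is non-degenerate enough (e.g.~that $\mu$ has the required uniform recurrence along $A_{q_1}\times A_{q_2}$) to be handled by the main theorem of~\cite{EL-joining2} rather than by a weaker single-prime statement.
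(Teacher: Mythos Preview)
Your proposal is correct and follows essentially the same strategy as the paper: single-factor invariance from Theorem~\ref{thm:dukethmforquaternionalgebras}, pairwise non-isogeny of the $\G_i$ (from the distinct ramification sets), and the Einsiedler--Lindenstrauss joinings classification~\cite{EL-joining2}. The only real difference is packaging: the paper invokes \cite[Thm.~1.8]{EL-joining2} as a black box tailored exactly to this situation, so that after checking (i) the tori $\torus_{\iota_{i,n}}$ have uniformly bounded complexity (they are stabilizers of $\iota_{i,n}(\sqrt{D_n})$ in $\quat_i^0$, hence cut out by equations of fixed degree), (ii) the $\G_i$ are pairwise non-$\Qq$-isogenous, and (iii) the single-factor statement holds, the conclusion is immediate. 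Your sketch instead unpacks what \cite[Thm.~1.8]{EL-joining2} does internally, which is why you run into the technical issues you flag at the end (fixing a common rank-two torus as $n$ varies, verifying the entropy and ergodicity inputs); these are precisely what that theorem absorbs. You should also note the bounded-complexity hypothesis, which you omit but which is needed for the cited result.
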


\begin{proof}[Proof of Theorem \ref{thm:invariance in product}]
As the groups $\G_i$ are not isogenous over $\Q$, Theorem~\ref{thm:dukethmforquaternionalgebras} and \cite[Thm.~1.8]{EL-joining2} imply the theorem.
%
%
%
%
\end{proof}

\section{Proof of Theorem \ref{thm:main-dynamic}}\label{sec:ProofMainDyn}

In this section we upgrade Theorem \ref{thm:invariance in product} using the norm surjectivity assumption in \eqref{eq:assumption compactopen} to obtain Theorem \ref{thm:main-dynamic}.
We will also comment on stronger versions of Theorem \ref{thm:main-dynamic} in \S \ref{sec:refinements}.

\subsection{Conditioning on $\G(\adele)^+$-orbits}
We denote by $m$ the Haar probability measure on $[\G]$.
Let $\cA$ be the preimage of the Borel $\sigma$-algebra on $[\G]/\G(\adele)^+$.
In particular, $\cA$ is countably generated and its atoms are the $\G(\adele)^+$-orbits in $[\G]$.
As $\G(\adele)^+$ is normal in $\G(\Aa)$, $\cA$ is invariant under $\G(\adele)$.
Also, each $\G(\adele)^+$-orbit is closed since it is of the form
$$\G(\Q)\rmg\G(\adele)^+=\G(\Q)\G(\adele)^+\rmg$$ and $\G(\Q)\G(\adele)^+$ is closed (this follows from a theorem of Borel and Harish-Chandra \cite{borelharishchandra} applied to $\prod_i \quat_i^1$).
%

For $m$-almost every $x \in [\G]$ we may let $m_x^\cA$ be the conditional measure of $m$ on the atom of $x$.
As the atoms of $\cA$ are closed subsets, these measures are supported on the $\G(\adele)^+$-orbits (see \cite[Ch.~5]{vol1} for the definition and basic properties of conditional measures).
By $\G(\adele)^+$-invariance of $m$, it follows that $m_x^\cA$ is the $\G(\adele)^+$-invariant probability measure on the closed orbit $x\G(\adele)^+$ $m$-almost everywhere.
Using that, we may as well define $m_x^\cA$ to be the unique $\G(\adele)^+$-invariant probability measure on $x\G(\adele)^+$ \emph{everywhere}, preserving the defining property of conditional measures.

\subsubsection{The space of bounded uniformly continuous functions}

For what follows, it will be convenient to work with a certain class of test functions. A function $f:[\G]\to \C$ is \emph{uniformly continuous} if for all $\varepsilon>0$ there is a neighborhood $U\subseteq\G(\adele)$ of the identity such that
\begin{equation*}
  \sup_{\rmg\in U,\, x\in[\G]}\lvert f(x\rmg)-f(x)\rvert<\varepsilon.
\end{equation*}
We note that the bounded uniformly continuous functions form a subspace $\mcC_{u}([\G])$ of the continuous functions that contains the space $\mcC_{c}([\G])$ of compactly supported continuous functions on $[\G]$.
Equipped with the uniform norm $\mcC_{u}([\G])$ is a Banach algebra.

The natural action of $\G(\adele)$ on the space of bounded measurable functions given by $g\cdot f(x)=f(xg)$ preserves $\mcC_{u}([\G])$ and the resulting representation is strongly continuous. 
In fact, the space $\mcC_{u}([\G])$ can be identified with the space of continuous vectors for the representation of $\G(\adele)$ on $L_m^\infty([\G])$.

\subsubsection{Decomposing $\mcC_{u}([\G])$}

We use the conditional measures for $\mathcal{A}$ to decompose functions on $[\G]$.
For $f$ a bounded measurable function on $[\G]$ define
\begin{align*}
\projchar{f}: x \mapsto \int_{[\G]} f \,\mathrm{d} m_x^\mathcal{A}.
\end{align*}
Note that $\projchar{f}$ is $\G(\Aa)^+$-invariant and $\cA$-measurable.
Moreover, $\projchar{f}$ and $f-\projchar{f}$ are bounded.
The function $f-\projchar{f}$ satisfies that
\begin{align*}
\int_{[\G]} (f-\projchar{f}) \,\mathrm{d} m_x^\mathcal{A} = 0
\end{align*}
since $\projchar{f}$ is constant on $\G(\adele)^+$-orbits.

\begin{remark}\label{rem:relationtoilya}
  The map $f\mapsto \projchar{f}$ is introduced in \cite{IlyaAnnals} as a projection operator from $L_{m}^{2}([\G])$ onto the \emph{character spectrum} of $L^2_m([\G])$, i.e.~the subspace of $\G(\adele)^+$-invariant functions. Note that for functions in the orthogonal complement of the character spectrum Theorem \ref{thm:main-dynamic} follows directly from Theorem \ref{thm:invariance in product} without assuming $\compactopenf$-invariance of the function -- see \cite[Cor.~3.3]{IlyaAnnals} as well as Lemma~\ref{lem:Thm for orth complement} below.
\end{remark}

\begin{lemma}\label{lem:continuityofprojection}
For any $f\in \mcC_{u}([\G])$ we have $\projchar{f}\in \mcC_{u}([\G])$.
Furthermore, if $f$ is invariant under a subgroup $H<\G(\adele)$ then so is $\projchar{f}$.
\end{lemma}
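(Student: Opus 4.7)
The plan is to exploit that $\G(\adele)^{+}$ is normal in $\G(\adele)$ in order to transfer the defining average of $\projchar{f}$ under right translation. First I would record the following identification of conditional measures: for every $x\in[\G]$ and every $g\in\G(\adele)$, the orbit decomposes as
\begin{equation*}
  xg\,\G(\adele)^{+}=x\G(\adele)^{+}g
\end{equation*}
by normality, and therefore the pushforward of $m_{x}^{\cA}$ under right translation by $g$ is a $\G(\adele)^{+}$-invariant probability measure on $xg\,\G(\adele)^{+}$. By uniqueness of such a measure (which was used to define $m_{xg}^{\cA}$ everywhere), one obtains the identity
\begin{equation*}
  \int_{[\G]} f(yg)\,dm_{x}^{\cA}(y)=\int_{[\G]} f\,dm_{xg}^{\cA}=\projchar{f}(xg).
\end{equation*}

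Granted this identity, the two assertions are immediate. For uniform continuity, given $\varepsilon>0$, I would choose a neighborhood $U\subseteq\G(\adele)$ of the identity such that $\sup_{x\in[\G]}\lvert f(xg)-f(x)\rvert<\varepsilon$ for all $g\in U$, which exists by $f\in\mcC_{u}([\G])$. Then for all $g\in U$ and all $x\in[\G]$,
\begin{equation*}
  \lvert\projchar{f}(xg)-\projchar{f}(x)\rvert
  =\Bigl\lvert\int_{[\G]}\bigl(f(yg)-f(y)\bigr)\,dm_{x}^{\cA}(y)\Bigr\rvert
  \leq\sup_{y\in[\G]}\lvert f(yg)-f(y)\rvert<\varepsilon,
\end{equation*}
which proves uniform continuity (boundedness by $\lVert f\rVert_{\infty}$ being trivial since $m_{x}^{\cA}$ is a probability measure).

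For the $H$-invariance claim, if $f(xh)=f(x)$ for every $h\in H$ and $x\in[\G]$, the same identity gives
\begin{equation*}
  \projchar{f}(xh)=\int_{[\G]} f(yh)\,dm_{x}^{\cA}(y)=\int_{[\G]} f(y)\,dm_{x}^{\cA}(y)=\projchar{f}(x).
\end{equation*}
The only step that is not entirely formal is the measure-theoretic identity $g_{*}m_{x}^{\cA}=m_{xg}^{\cA}$, so I would take care to justify it using the normality of $\G(\adele)^{+}$ and the uniqueness of the $\G(\adele)^{+}$-invariant probability measure on the closed orbit $xg\,\G(\adele)^{+}$; everything else reduces to integrating the modulus of continuity of $f$ against a probability measure.
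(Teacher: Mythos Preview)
Your argument is correct and follows essentially the same route as the paper's proof: both hinge on the identity $g_{*}m_{x}^{\cA}=m_{xg}^{\cA}$ (which the paper phrases as ``invariance of $\mathcal{A}$ under $\G(\adele)$'') and then estimate $\lvert\projchar{f}(xg)-\projchar{f}(x)\rvert$ by integrating $\lvert g\cdot f-f\rvert$ against the probability measure $m_{x}^{\cA}$. If anything, you are more explicit than the paper in justifying the measure identity via normality of $\G(\adele)^{+}$ and uniqueness of the invariant measure on each orbit.
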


We have thus obtained for any function $f\in \mcC_{u}([\G])$ the decomposition $f= \projchar{f} + (f-\projchar{f})$ into a $\G(\adele)^+$-invariant function and a function with zero integral over all $\G(\adele)^+$-orbits (which respects additional invariance of $f$).

\begin{proof}[Proof of Lemma~\ref{lem:continuityofprojection}]
We first prove that $\projchar{f}$ is uniformly continuous. 
Let $\varepsilon>0$ be given and choose a symmetric neighborhood $U$ of the identity in $\G(\adele)$ for $f$ and $\varepsilon$ as in the definition of uniform continuity. 
Let $x\in[\G]$ and $\rmg\in U$ be arbitrary.
Invariance of $\mathcal{A}$ under the action of $\G(\adele)$, uniform continuity of $f$, and the triangle inquality imply that $\projchar{f}$ is uniformly continuous.
If $f$ is $H$-invariant we see that
\begin{align*}
\projchar{f}(xh) 
&=\int_{[\G]} f \,\mathrm{d}m_{xh}^{\cA} 
=\int_{[\G]}h.f \,\mathrm{d} m_x^{\cA}
=\int_{[\G]}f \,\mathrm{d} m_x^{\cA}
=\projchar{f}(x)
\end{align*}
as claimed.
\end{proof}

\subsection{Concluding the proof of Theorem \ref{thm:main-dynamic}}
We recall that our final goal is to prove that any weak${}^\ast$-limit of the Haar measures on the toral packets in Theorem \ref{thm:main-dynamic} is equal to $m_{[\G]_{\compactopenf}}$.
This amounts to showing that $\int_{[\G]} f \,\mathrm{d} \mu = \int_{[\G]} f \,\mathrm{d} m$ for a large class of $\compactopenf$-invariant functions~$f$ and any such weak${}^\ast$-limit $\mu$.

\begin{lemma}\label{lem:Thm for orth complement}
Let $\mu$ be any measure on $[\G]$ invariant under $\G(\adele)^+$.
Then for any $f \in \mcC_{u}([\G])$ with $\projchar{f}=0$ we have
\begin{align*}
\int_{[\G]} f \,\mathrm{d} \mu = \int_{[\G]} f \,\mathrm{d} m=0.
\end{align*}
\end{lemma}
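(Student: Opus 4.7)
The plan is to identify $\projchar{f}$ with the $\cA$-conditional expectation of $f$ both with respect to $m$ and with respect to $\mu$, whereupon the conclusion follows by iterated integration together with the hypothesis $\projchar{f} = 0$. The hypothesis $\projchar{f}=0$ then kills both integrals simultaneously.

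For the measure $m$, I would proceed as follows. By definition $m_x^{\cA}$ is the unique $\G(\adele)^+$-invariant probability measure on the atom $x\G(\adele)^+$, where uniqueness follows from the fact that $\G(\adele)^+$ acts transitively on $x\G(\adele)^+$, so any $\G(\adele)^+$-invariant probability measure on this orbit arises from Haar measure on $\G(\adele)^+$ modulo the stabilizer of $x$. In particular $m_x^{\cA}$ coincides with the conditional measure of $m$ with respect to the countably generated $\sigma$-algebra $\cA$. By iterated integration,
\begin{equation*}
  \int_{[\G]} f \, dm = \int_{[\G]} \Big( \int_{[\G]} f \, dm_x^{\cA} \Big) \, dm(x) = \int_{[\G]} \projchar{f} \, dm = 0.
\end{equation*}

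For the measure $\mu$, which we may assume to be non-zero (otherwise there is nothing to prove), the key step is to show $\mu_x^{\cA} = m_x^{\cA}$ for $\mu$-almost every $x$, where $\mu_x^{\cA}$ denotes the conditional measure of $\mu$ on the atom of $x$. Since $\mu$ is $\G(\adele)^+$-invariant and the atoms of $\cA$ are the $\G(\adele)^+$-orbits (which are $\G(\adele)^+$-stable), a standard uniqueness argument for disintegrations \cite[Ch.~5]{vol1} implies that $\mu_x^{\cA}$ is itself a $\G(\adele)^+$-invariant probability measure on $x\G(\adele)^+$ for $\mu$-almost every $x$. The uniqueness on each orbit noted above then forces $\mu_x^{\cA} = m_x^{\cA}$ almost surely, and iterated integration gives
\begin{equation*}
  \int_{[\G]} f \, d\mu = \int_{[\G]} \Big( \int_{[\G]} f \, d\mu_x^{\cA} \Big) d\mu(x) = \int_{[\G]} \projchar{f} \, d\mu = 0.
\end{equation*}

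The main point to be careful about is the identification $\mu_x^{\cA} = m_x^{\cA}$, which rests on two ingredients: transitivity of $\G(\adele)^+$ on each atom (giving uniqueness of an invariant probability measure on the orbit), and the invariance of $\mu_x^{\cA}$ inherited from the $\G(\adele)^+$-invariance of $\mu$ via the uniqueness of the disintegration. Both are standard, so once the machinery of conditional measures on a countably generated $\sigma$-algebra is in place (as set up earlier in this section), the proof reduces to two lines of iterated integration.
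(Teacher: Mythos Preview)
Your proposal is correct and follows essentially the same route as the paper's own proof: disintegrate $\mu$ over the $\sigma$-algebra $\cA$ of $\G(\adele)^+$-orbits, use $\G(\adele)^+$-invariance of $\mu$ to conclude that $\mu_x^{\cA}$ is $\G(\adele)^+$-invariant on each orbit, invoke uniqueness of the invariant probability measure on the orbit to identify $\mu_x^{\cA}=m_x^{\cA}$ $\mu$-almost surely, and then integrate $\projchar{f}=0$. The paper's version is slightly terser (it does not spell out the $m$-case separately, as this is immediate from the definition of $\projchar{f}$ as the $m$-conditional expectation), but the argument is the same.
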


In the context of Theorem \ref{thm:main-dynamic}, the measure $\mu$ is $\G(\adele)^+$-invariant by Theorem~\ref{thm:invariance in product}.

\begin{proof}
  Let $\{\mu_{x}^{\mathcal{A}}:x\in[\G]\}$ be a family of conditional measures for $\mu$ with respect to $\mathcal{A}$.
  As $\mu$ is $\G(\adele)^{+}$-invariant, the measure $\mu_{x}^{\mathcal{A}}$, which is supported on the orbit $x\G(\adele)^{+}$, is $\G(\adele)^{+}$-invariant for $\mu$-almost all $x\in[\G]$. 
By uniqueness of the invariant measure, it follows that $\mu_{x}^{\mathcal{A}}=m_{x}^{\mathcal{A}}$ for $\mu$-almost all $x\in[\G]$. 
Thus we have shown that
  \begin{equation*}
    \int_{[\G]}f\,\mathrm{d}\mu_{x}^{\mathcal{A}}
    =\int_{[\G]}f\,\mathrm{d}m_{x}^{\mathcal{A}}
	= \projchar{f}(x)    
    =0
  \end{equation*}
  for $\mu$-almost every $x\in[\G]$. In particular, we obtain the statement of the lemma by the characterizing properties of conditional measures.
\end{proof}

%

\begin{proof}[Proof of Theorem \ref{thm:main-dynamic}]
Recall that we have $\compactopenf = \prod_i \compactopen_{\mathrm{f},i}$ where  for any $i$ the compact open subgroup $\compactopen_{\mathrm{f},i}<\G_i(\adelef)$ satisfies \eqref{eq:assumption compactopen}, i.e.~that 
\begin{align*}
\Norm_{\quat_i}: \compactopen_{\mathrm{f},i} \rightarrow \rquot{\widehat{\Z}^{\times}}{(\widehat{\Z}^{\times})^2}
\end{align*}
is surjective.

Let $\mu$ be a weak${}^\ast$-limit of the Haar measures in Theorem~\ref{thm:main-dynamic}.
We need to show that for any $\compactopenf$-invariant compactly supported $f$ on $[\G]$ we have
 \begin{equation}\label{eq:KEquidis}
 \int_{[\G]}f\,\mathrm{d}\mu=\int_{[\G]}f\,\mathrm{d}m.
 \end{equation}
By Theorem \ref{thm:invariance in product} the measure $\mu$ is $\G(\adele)^+$-invariant and hence Lemma \ref{lem:Thm for orth complement} applies to show \eqref{eq:KEquidis} for $f-\projchar{f}$ as $\projchar{(f-\projchar{f})}= 0$.
It thus remains to check \eqref{eq:KEquidis} for $\projchar{f}$ which is $\compactopenf\,\G(\adele)^+$-invariant by Lemma \ref{lem:continuityofprojection}.
Note that $\compactopenf\, \G(\adele)^{+}$ is a subgroup of $\G(\Aa)$ as $\G(\Aa)^+$ is normal.

We  claim that such a function $\projchar{f}$ is constant in which case \eqref{eq:KEquidis} is obvious. This follows from the fact that the double quotient
$$
\lrquot{\G(\bQ)}{\G(\adele)}{\compactopenf\, \G(\adele)^{+}}
$$
is a singleton under our assumptions on $\compactopenf$.
To this end, it suffices to prove that for any $i \in \{0,\ldots,s\}$ the double quotient
\begin{equation}\label{eq:trivialquotient}
\lrquot{\G_i(\bQ)}{\G_i(\adele)}{\compactopen_{\mathrm{f},i}\,\G_i(\adele)^{+}}	
\end{equation}
is a singleton. For this we consider the group homomorphism induced by the reduced norm  $\Nr_i=\Nr_{\quat_i}$
$$
\Nr_i:\G_i(\bQ)\bash\G_i(\adele)/{\compactopen_{\mathrm{f},i}\,\G_i(\adele)^{+}}
 \ra \Qt\bash\Aat/{\Aat}^2\Nr_i(\compactopen_{\mathrm{f},i}).
$$
We note first that it is injective hence an isomorphism onto its image. 

Let us compute the image: for any prime $q$ the norm $\Nr_i:\quatt_i(\Q_q) \to \Q_q^\times$ is surjective since any non-degenerate quadratic form in five variables over $\Q_q$ is isotropic \cite[Ch.~4]{cassels}. Over the real numbers, the norm $\Nr_{i}:\quat_i(\R)^\times \to \R^\times$ is surjective if $\quat_i$ is indefinite and otherwise it has image $\R_{>0}$. 
It follows that
$$\Nr_i(\quatt_i(\Aa))=\begin{cases}\Aat &\hbox{ if $\quat_i$ is indefinite},\\ \Rr_{>0}\Aa_{\mathrm{f}}^\times&\hbox{ if $\quat_i$ is definite}\end{cases}.$$
In addition, by the Hasse-Minkowski Theorem \cite[Ch.~6]{cassels} this also implies that
$$\Nr_i(\quatt_i(\bQ))=\begin{cases}\Qt &\hbox{ if $\quat_i$ is indefinite},\\ \Qq_{>0}&\hbox{ if $\quat_i$ is definite}\end{cases}.$$
From this we conclude that \eqref{eq:trivialquotient} is isomorphic to
$$
\begin{cases}\Qt\bash\Aat/{\adele^\times}^2\Nr_i(\compactopen_{\mathrm{f},i}) &\hbox{ if $\quat_i$ is indefinite},\\ 
\Q_{>0}\bash\R_{>0}\times\Aft/{\adele^\times}^2\Nr_i(\compactopen_{\mathrm{f},i})&\hbox{ if $\quat_i$ is definite.}\end{cases}$$
Under the assumption 
\eqref{eq:assumption compactopen} (i.e.~$\Nr_i(K_{\rmf,i})=\whZt/(\whZt)^2$) this is either
$$\Aat/\Qt\whZt{\adele^\times}^2\hbox{ or }\R_{>0}\Aft/\Q_{>0}\whZt {\adele^\times}^2$$ and 
both are trivial since
$\Aft= \Q_{>0} \whZt$.  This concludes the proof of the above claim and thus also of the theorem.
\end{proof}

\subsection{The case of Eichler orders}\label{sec:Eichlerorder}
In this section we remark that assumption \eqref{eq:assumption compactopen} holds more generally for open-compact subgroups associated to Eichler orders (such as maximal orders). 
This extension is important as it allows to prove joint equidistributions results similar to Theorem \ref{thm:main-ellipticorder} for {\em Heegner points}.

\begin{definition}
	An {Eichler order} in a quaternion algebra $\quat$ is an intersection of two maximal orders.
\end{definition}

Being an Eichler order is a local property. 
Indeed, an order ~$\mathcal{O} \subset \quat(\Q)$ is Eichler if and only if it is everywhere locally Eichler: for every prime $q$,
$\mathcal{O}_q= \mathcal{O} \otimes \Z_q$ is the intersection of two maximal orders in $\quat(\Q_q)$.

Moreover one has the following classification of local Eichler orders in the quaternion algebra $\quat(\Qq_q)$ (see \cite[Prop.~13.3.4,\ Prop.~23.4.3]{voight}):
\begin{itemize}
\item[--] If $\quat$ is ramified at $q$, $\mcO_q	$ is the unique maximal order of $\quat(\Qq_q)$ given by
\begin{equation*}
\mathcal{O}_q = \{x \in \quat(\Q_q): \Norm(x) \in \Z_q\}.
\end{equation*}
\item[--] If $\quat$ is non-ramified at $q$, one has $\quat(\Qq_q)\simeq \Mat_2(\Qq_q)$ and under this identification, there is some $e \geq 0$ such that $\mcO_q$ is conjugate to the order
$$
\left\{ \begin{pmatrix}
a & b \\ q^e c & d
\end{pmatrix}: a,b,c,d \in \Z_q \right\}.
$$
\end{itemize}
The classification implies the following lemma verifying \eqref{eq:assumption compactopen} for the open-compact subgroup $\unitsadelef{\mathcal{O}}$ when $\mathcal{O}$ is an Eichler order.

\begin{lemma}\label{normsurjeichler}
For $\mathcal{O} \subset \quat(\Q)$  an Eichler order we have
\begin{align}\label{eq:norm on Eichler}
\Norm\big(\unitsadelef{\mathcal{O}} \big) = \widehat{\Z}^\times.
\end{align}
\end{lemma}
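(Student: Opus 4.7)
The argument is local--global. I would use
\[
\unitsadelef{\mathcal{O}} = \prod_q \mathcal{O}_q^\times, \qquad \widehat{\Z}^\times = \prod_q \Z_q^\times,
\]
where $\mathcal{O}_q = \mathcal{O} \otimes_\Z \Z_q$. Writing $\mathcal{O} = \mathcal{O}_1 \cap \mathcal{O}_2$ for maximal orders $\mathcal{O}_1,\mathcal{O}_2$, one has $\mathcal{O}_q = \mathcal{O}_{1,q} \cap \mathcal{O}_{2,q}$ at every finite prime $q$, so $\mathcal{O}_q$ is itself an Eichler order in $\quat_q$. The inclusion $\Nr_{\quat}(\mathcal{O}_q^\times) \subseteq \Z_q^\times$ being automatic, \eqref{eq:norm on Eichler} reduces to the local surjectivity
\[
\Nr_{\quat_q}: \mathcal{O}_q^\times \twoheadrightarrow \Z_q^\times \qquad (\text{every prime }q),
\]
after which a preimage of any $(u_q)_q \in \widehat{\Z}^\times$ is assembled prime by prime.

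I would then split the local verification into two cases. If $q$ is unramified in $\quat$, the algebra $\quat_q$ is isomorphic to $\Mat_2(\Q_q)$ and any Eichler order is $\GL_2(\Q_q)$-conjugate to the standard order $\bigl(\begin{smallmatrix} \Z_q & \Z_q \\ q^n\Z_q & \Z_q\end{smallmatrix}\bigr)$ of some level $n\geq 0$, whose diagonal subgroup $\mathrm{diag}(\Z_q^\times,\Z_q^\times)$ already surjects onto $\Z_q^\times$ under the determinant. If $q$ is ramified in $\quat$, then $\quat_q$ is the quaternion division algebra over $\Q_q$, which admits a unique maximal order; any Eichler order at $q$ coincides with it, and its unit group is precisely $\{x \in \quat_q^\times : v_q(\Nr_{\quat_q}(x))=0\}$.

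The only non-formal input is the surjectivity of $\Nr_{\quat_q}$ on maximal order units at a ramified prime. I would handle this by restricting to the unramified quadratic extension $\Q_{q^2}\hookrightarrow \quat_q$: the reduced norm restricts there to the field norm $N_{\Q_{q^2}/\Q_q}$, which surjects onto $\Z_q^\times$ on units because $\Q_{q^2}/\Q_q$ is unramified. This is the only place where genuine local arithmetic enters; the remainder is bookkeeping, and combining the local surjections over all primes yields \eqref{eq:norm on Eichler}.
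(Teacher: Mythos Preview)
Your proof is correct and follows essentially the same route as the paper: a local--global reduction to $\Nr(\mathcal{O}_q^\times)=\Z_q^\times$, followed by the same case split and the same argument at unramified primes via the standard Eichler order. The only difference is at a ramified prime: the paper argues that $\Nr:\quat_q^\times\to\Q_q^\times$ is surjective because every nondegenerate quadratic form in five variables over $\Q_q$ is isotropic, then invokes the description \eqref{eqmaxorder} of the maximal order; you instead embed the unramified quadratic extension $\Q_{q^2}\hookrightarrow\quat_q$ and use that $N_{\Q_{q^2}/\Q_q}:\mathcal{O}_{\Q_{q^2}}^\times\to\Z_q^\times$ is surjective for an unramified extension. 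Both inputs are standard and yield the claim immediately; your variant is arguably more direct since it produces units on the nose rather than passing through all of $\quat_q^\times$.
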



\section{Further refinements}\label{sec:refinements}\label{relaxnormsurj}
In this section we analyse under which assumptions on sequences of toral packets $[\torus_n \rmg_n]_{\compactopenf}$ Theorem~\ref{thm:main-dynamic} holds for a general compact open subgroup $\Kf=\prod_{i}K_{\rmf,i}\subset\GAf$.

By Lemma~\ref{lem:Thm for orth complement} it is sufficient to check whether \eqref{eq:KEquidis} holds for functions invariant under $K_{\rmf}\G(\adele)^+$, i.e.~functions on the quotient $\Gres/K_\rmf$ where $\Gres$ denotes the abelian group
\begin{align*}
\Gres=\lrquot{\G(\Q)}{\G(\adele)}{\G(\adele)^+}.
\end{align*}
Furthermore, we only have to check \eqref{eq:KEquidis} for $\compactopenf$-invariant characters on $\Gres$.
We start by spelling out what they are.

\subsection{Characters on $\Gres$} 
We have $\Gres = \prod_i \Grescomp{i}$ where $$\Grescomp{i}=\lrquot{\G_i(\Q)}{\G_i(\adele)}{\G_i(\adele)^+}.$$
Recall that in the course of proving Theorem \ref{thm:main-dynamic} we have also established the following lemma.

\begin{lemma}\label{lem:normimage}
For any $i\in \{0,\ldots,s\}$ the reduced norm induces isomorphisms
\begin{equation}\label{eq:quotdefinite}
\Nr_i:\Grescomp{i} \simeq \lrquot{\Q_{>0}}{\R_{>0}\times \adelef^\times}{{\adele^\times}^2 }
\end{equation}
if $\quat_i$ is definite and 
\begin{equation}\label{eq:quotindefinite}
\Nr_i:\Grescomp{i} \simeq 
\Q^\times\bash\adele^\times/{{\adele^\times}^2}.
\end{equation}
if $\quat_i$ is indefinite.
\end{lemma}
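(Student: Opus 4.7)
The plan is to adapt directly the argument already carried out in the proof of Theorem~\ref{thm:main-dynamic}, where the same map composed with a further quotient by $\Nr_i(\compactopen_{\rmf,i})$ was analysed. First I would note that the reduced norm $\Nr_i:\quatt_i\to\Gm$ restricted to the central $\Gm\subset\quatt_i$ is the squaring map, and therefore induces a well-defined morphism of $\bQ$-groups $\bar\Nr_i:\G_i=\quatt_i/\Gm\to\Gm/\Gm^2$. Passing to adelic points and composing with $\G_i(\adele)\twoheadrightarrow\Grescomp{i}$, I would obtain a map $\bar\Nr_i:\Grescomp{i}\to\Qt\backslash\Aat/(\Aat)^2$, well-defined because $\bar\Nr_i$ vanishes on the image of $\quat_i^1(\adele)$, i.e.~on $\G_i(\adele)^+$.

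The next step is injectivity. Given $g\in\G_i(\adele)$ whose class lies in the kernel, I would lift to $\tilde g\in\quatt_i(\adele)$, write $\Nr_i(\tilde g)\in\Nr_i(\quatt_i(\bQ))\cdot(\Aat)^2$, use surjectivity of $\quatt_i(\bQ)\twoheadrightarrow\Nr_i(\quatt_i(\bQ))$ (which is just Hasse--Minkowski applied to the norm form in four variables) to multiply $\tilde g$ by an element of $\quatt_i(\bQ)$, and then divide by an element of the center so that the resulting element of $\quatt_i(\adele)$ has norm $1$; this produces a preimage in $\quat_i^1(\adele)$, showing that $g$ maps to the trivial class in $\Grescomp{i}$.

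For surjectivity onto the claimed target, I would reuse verbatim the two local/global computations performed inside the proof of Theorem~\ref{thm:main-dynamic}: local isotropy of the reduced norm form in five variables gives
\begin{equation*}
  \Nr_i(\quatt_i(\adele))=\begin{cases}\Aat & \text{if $\quat_i$ is indefinite,}\\ \R_{>0}\Aft & \text{if $\quat_i$ is definite,}\end{cases}
\end{equation*}
while the Hasse--Minkowski theorem gives
\begin{equation*}
  \Nr_i(\quatt_i(\bQ))=\begin{cases}\Qt & \text{if $\quat_i$ is indefinite,}\\ \Q_{>0} & \text{if $\quat_i$ is definite.}\end{cases}
\end{equation*}
Combining these yields an isomorphism of $\Grescomp{i}$ with $\Qt\backslash\Aat/(\Aat)^2$ in the indefinite case and with $\Q_{>0}\backslash\R_{>0}\Aft/(\Aat)^2$ in the definite case, and the latter is identified with $\Q_{>0}\backslash(\R_{>0}\times\Aft)/(\Aat)^2$ via the splitting $\Aat=\R^\times\times\Aft$ (using $(\Aat)^2=\R_{>0}\times(\Aft)^2$).

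There is no real obstacle here: the entire content of the lemma is already present inside the proof of Theorem~\ref{thm:main-dynamic}, and the only task is bookkeeping — separating out the intrinsic isomorphism provided by $\bar\Nr_i$ (no compact open subgroup involved) from the application made earlier. The one point to watch is ensuring that quotienting the source by $\G_i(\adele)^+$ corresponds precisely to quotienting the target by $(\Aat)^2$, which is exactly what the lifting argument in the injectivity step provides.
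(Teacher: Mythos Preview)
Your proposal is correct and follows exactly the approach the paper takes: the paper does not give a separate proof of this lemma but simply says ``Recall that in the course of proving Theorem~\ref{thm:main-dynamic} we have also established the following lemma,'' referring back to the norm computation in \S\ref{sec:ProofMainDyn}. In fact you supply more detail than the paper does --- the paper merely asserts ``We note first that it is injective'' without justification, whereas your lifting argument (multiply by a rational element to kill the $\Q$-part of the norm, then by a central element to land in $\quat_i^1(\adele)$) spells this out.
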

Note that the natural homomorphism
\begin{align*}
\lrquot{\Q_{>0}}{\R_{>0}\times \adelef^\times}{{\adele^\times}^2 } \to 
\Q^\times\bash\adele^\times/{{\adele^\times}^2}
\end{align*}
is an isomorphism as any element of $\adele^\times$ can be multiplied by $-1 \in \Q^\times$ if necessary to have a positive real component.
Lemma~\ref{lem:normimage} thus gives an isomorphism (independent of the ramification at the archimedean place)
\begin{equation}\label{eq:eliminating pm1 issue}
\Nr_i:\Grescomp{i} \simeq 
\Q^\times\bash\adele^\times/{{\adele^\times}^2}.
\end{equation}
In particular, $\Gres$ is a compact abelian group, every continuous function on $\Gres$ is uniformly continuous,
and $\mcC(\Gres)$ is densely generated by its group of characters $\what{\Gres}$.

By the previous lemma, any character of $\Grescomp{i}$ is of the shape $\chi_i\circ\Nr_{i}$
for a quadratic Hecke character $\chi_i$ of $\Qq$, i.e.~a character 
$$\chi_i:\Qt\bash\Aat/{\Aat}^2\to \{\pm 1\}.$$ 
Thus, any character $\chi\in\what{\Gres}$ is of the shape
\begin{align}\label{eq:tuplechar}
\rmg=(g_i)_i\in\Gres
\mapsto \chi(\rmg)=\prod_{i=0}^s\chi_i(\Nr_{i}(g_i))
\end{align}
for a uniquely defined tuple $(\chi_i)_i$ of quadratic Hecke characters as above. 
The $\Kf$-invariant characters correspond to the tuples of characters $(\chi_i)_i$ where $\chi_i$ is $\Nr_i(K_{\rmf,i})$-invariant for all $i$.

\subsection{The torus integral for a character}
Let $\chi$ be a character of $\Gres$ given by a tuple $(\chi_i)_i$ of quadratic Hecke characters as in \eqref{eq:tuplechar}.
We view $\chi$ as a $\G(\Aa)^+$-invariant function in $\mcC_u([\G])$.
Let $[\torus_\iota \rmg]\subset[\G]$ be a toral packet with associated quadratic field $\field$. As of Weyl's equidistribution theorem, we consider the torus integral
$$\int_{[\torus_\iota \rmg]}\chi = \int_{[\torus_\iota]}\chi(t \rmg)dt.$$

Since $\chi$ is a character we have
$$\int_{[\torus_\iota]}\chi(t\rmg)dt=\chi(\rmg)\int_{[\torus_\iota]}\chi(t)dt.$$

Let $\Pi$ be the product map
$$\Pi:(\chi_i)_{i}\mapsto \prod_i\chi_i$$
on quadratic Hecke characters.

\begin{proposition}\label{proptorusintegralchar} Let $(\chi_i)_i$ be a tuple of quadratic Hecke characters and let $\chi$ be the associated character on $\Gres$ as in \eqref{eq:tuplechar}.

\begin{itemize}
	\item[--] If $\Pi(\chi_i)_i\equiv 1$, then for any torus orbit $\torus_\iota$ we have
$$\int_{[\torus_\iota]}\chi(t)dt=1.$$
\item[--] 	If $\Pi(\chi_i)_i\not\equiv 1$, let $\field_{\Pi(\chi_i)_i}$ be the  quadratic field corresponding to the Legendre symbol corresponding to the Hecke character ${\Pi(\chi_i)_i}$.
For any torus $\torus_\iota$ with associated quadratic field $\field$, we have
\begin{align*}
\int_{[\torus_\iota]}\chi(t)dt=
\begin{cases}
0 & \text{if }\field\neq \field_{\Pi(\chi_i)_i}\\
1 & \text{else}
\end{cases}.
\end{align*}
\end{itemize}
\end{proposition}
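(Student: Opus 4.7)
The plan is to pull the character $\chi$ back through $\iota$ to the compact abelian group $[\torus_\field]$, where orthogonality of characters gives a clean dichotomy. The case distinction in the statement then reduces to deciding when the pulled-back character is trivial, which is a formal consequence of global class field theory.

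First I would use the norm compatibility of the embeddings: for each $\iota_i:\field\hookrightarrow\quat_i$ one has $\Nr_{\quat_i}\circ\iota_i=\Nr_{\field/\Q}$ on $\fieldt$. Since each $\chi_i$ is quadratic, the pullback $\chi_i\circ\Nr_{\field/\Q}$ kills $\Aa^\times$ (because $\Nr_{\field/\Q}(z)=z^2$ for $z\in\Aa^\times$) and thus descends through the surjection $\fieldt(\Aa)\twoheadrightarrow\torus_\field(\Aa)$ given by Hilbert 90 at each place. Therefore, for any $t\in\torus_\field(\Aa)$ with lift $\tilde{t}\in\fieldt(\Aa)$,
\begin{equation*}
\chi(\iota(t))=\prod_{i}\chi_i\big(\Nr_{\quat_i}(\iota_i(\tilde{t}))\big)=\Pi((\chi_i)_i)\big(\Nr_{\field/\Q}(\tilde{t})\big),
\end{equation*}
independent of the chosen lift. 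Denote the resulting quadratic character of $\torus_\field(\Aa)$ by $\psi$.

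Since $\Pi((\chi_i)_i)$ is a Hecke character and hence trivial on $\Q^\times$, the character $\psi$ is trivial on $\torus_\field(\Q)$ and descends to a character of the compact abelian group $[\torus_\field]$. Orthogonality of characters then yields
\begin{equation*}
\int_{[\torus_\iota]}\chi(t)\,dt=\int_{[\torus_\field]}\psi(t)\,dt=\begin{cases}1 & \text{if $\psi$ is trivial on }[\torus_\field],\\ 0 & \text{otherwise.}\end{cases}
\end{equation*}

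The remaining step is to decide when $\psi$ is trivial. If $\Pi((\chi_i)_i)\equiv 1$ then $\psi\equiv 1$ on every torus and the integral equals $1$, which proves the first bullet. Otherwise $\Pi((\chi_i)_i)$ is a non-trivial quadratic Hecke character of $\bQ$ and corresponds under global class field theory to the quadratic extension $\field_{\Pi((\chi_i)_i)}/\bQ$. By functoriality of the Artin map, $\psi=\Pi((\chi_i)_i)\circ\Nr_{\field/\Q}$ then corresponds to the relative quadratic extension $\field\cdot\field_{\Pi((\chi_i)_i)}/\field$, and is therefore trivial precisely when $\field_{\Pi((\chi_i)_i)}\subseteq\field$, i.e.\ when $\field=\field_{\Pi((\chi_i)_i)}$. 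Combining with the orthogonality dichotomy yields the second bullet. The only mild obstacle is verifying that $\psi$ descends correctly to $[\torus_\field]$ (using the quadratic nature of each $\chi_i$) and invoking class field theory for the relative extension; beyond these book-keeping steps the argument is purely formal.
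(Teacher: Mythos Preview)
Your proof is correct and follows essentially the same approach as the paper: both reduce $\chi\circ\iota$ to $\Pi((\chi_i)_i)\circ\Nr_{\field/\Q}$ via norm compatibility, and then decide when this pulled-back character is trivial on $[\torus_\field]$. The only cosmetic difference is that where you invoke orthogonality of characters and the functoriality of the Artin map (so that $\Pi((\chi_i)_i)\circ\Nr_{\field/\Q}$ cuts out $\field\cdot\field_{\Pi((\chi_i)_i)}/\field$), the paper argues more concretely: it observes directly that the integrand is identically $1$ when $\field=\field_{\Pi((\chi_i)_i)}$, and otherwise exhibits a prime $p$ split in $\field$ but inert in $\field_{\Pi((\chi_i)_i)}$ to produce a $t_p$ with $\Pi((\chi_i)_i)(\Nr_\field(t_p))=-1$, then uses translation invariance of Haar measure to force the integral to vanish. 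Your class-field-theoretic phrasing is a clean abstraction of exactly this argument.
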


\begin{proof}
Note that for any $x \in \field$ and any $i$ we have $\Nr_i(\iota_i(x)) = \Nr_{\field}(x)$ where $\Nr_\field$ denotes the norm form of $\field/\Q$.
Hence, we have
\begin{align*}
\int_{[\torus_\iota]}\chi(t)dt
=
\int_{[\torus_{\field}]}\prod_{i}\chi_i(\Nr_{i}(\iota_i(t)))dt
=\int_{[\torus_{\field}]} \Pi(\chi_i)_i\big(\Nr_{\field}(t)\big)dt.
\end{align*}
This shows the first part, so we assume that $\Pi(\chi_i)_i$ is non-trivial.

The character $\Pi(\chi_i)_i$ corresponds to a classical Legendre symbol character and is associated with some uniquely defined quadratic field $\field_{\Pi(\chi_i)_i}$. 
If $\field=\field_{\Pi(\chi_i)_i}$, then we have by definition of the Legendre symbol
\begin{align*}
\Pi(\chi_i)_i \circ \Nr_{\field} =1 \implies \int_{[\torus_{\field}]}\Pi(\chi_i)_i(\Nr_{\field}(t))dt = 1.
\end{align*}
If $\field\not=\field_{\Pi(\chi_i)_i}$ there exists a prime $p$ split in $\field$ and inert in $\field_{\Pi(\chi_i)_i}$ so that for $t_p$ in the corresponding idele class one has
\begin{align*}
\Pi(\chi_i)_i\big(\Nr_{\field}(t_p)\big)=-1 \implies \int_{[\torus_{\field}]}\Pi(\chi_i)_i(\Nr_{\field}(t))dt = 0
\end{align*}
as follows from the substitution $t \mapsto t\, t_p$ where $t_p$ is as above.
\end{proof}

\begin{definition} 
Let $\Kf=\prod_{i}K_{\rmf,i}\subset\GAf$ be a compact open subgroup. The set of {\em exceptional fields} attached to $\Kf$ is defined as the set of quadratic fields attached to the non-trivial products $\Pi(\chi_i)_i$ for which the corresponding character $\chi$ of $\Gres$ is $\Kf$ invariant:
$$\mscK(\Kf)
=\{\field_{\Pi(\chi_i)_i}: \Pi(\chi_i)_i\not=1,\ \chi=\prod_i\chi_i\circ\Nr_{\quat_i}\ K_{\rmf}\hbox{-invariant}\ \}.$$
\end{definition}

In particular, given a limit measure $\mu$ of a sequence of toral packets $[\torus_{\iota_n}\rmg_n]$ for which the underlying quadratic field is constant equal to
$\field_{\Pi(\chi_i)_i}$, the measure $\mu$ cannot satisfy \eqref{eq:KEquidis} for the test function $\chi=\prod_i\chi_i\circ\Nr_{\quat_i}$ if the latter is $K_{\rmf}$-invariant.
Indeed, all torus integrals in that case have modulus $1$ as
\begin{align*}
\bigg| \int_{[\torus_{\iota_n}\rmg_n]}\chi \bigg| = |\chi(\rmg_n)| =1
\end{align*}
by Proposition~\ref{proptorusintegralchar}.
On the other hand, since $\chi$ is non-trivial 
$$\int_{[\G]}\chi = \prod_i\int_{[\G_i]}\chi_i(\Nr_{\quat_i}(g_i))dg_i=0.$$
 
Thus, these finitely many exceptional fields are natural obstructions to equidistribution on $[\G]_{\Kf}$. Therefore, one only has to  consider sequences of toral packets $[\torus_{\iota_n}\rmg_n]$ whose associated quadratic fields $\field_n$ are not exceptional.

We remark that $\mscK(\Kf)$ is a finite set as $\Gres/\Kf$ is finite.

\subsection{A refined equidistribution criterion}

We denote by $$\pi_i:\G_i(\Aa)\to \Grescomp{i},\ \pi: \G(\Aa) \to \Gres$$ the natural projections. We also define 
the closed diagonal subgroup
\begin{align*}
\Delta \Gres = \{h \in \Gres: \Nr_i(h_i) = \Nr_j(h_j)\text{ for all }i,j\}.
\end{align*}
Here, the condition $\Nr_i(h_i) = \Nr_j(h_j)$ is understood in the sense of \eqref{eq:eliminating pm1 issue}.

\begin{cor}\label{cor:main theorem for general cptopen}
  Let $\compactopenf = \prod_i\compactopen_{\mathrm{f},i} < \G(\adelef)$ be a compact open subgroup and let $\mscK(K_\rmf)$ be the set of exceptional quadratic fields attached to it.
  Let $q_1,q_2$ be distinct odd primes.
  
  For any sequence $\rmg_n\in\GA$ and any sequence of diagonally embedded tori $\torus_n\subset\G$ attached to quadratic fields $\field_n$ such that
  \begin{itemize}
  \item[--] for any $n$, $\field_n\not\in \mscK(K_\rmf)$,
  \item[--] $\disc([\torus_n\rmg_n])\ra\infty$ for $n\ra\infty$, and
  \item[--] $q_1$ and $q_2$ are split in $\field_n$ for every $n$,
  \end{itemize}
the following are equivalent.
  \begin{enumerate}
    \item The packets $[\torus_{n}\rmg_{n}]_{\compactopenf}$ equidistribute in $[\G]_{\compactopenf}$.
    \item $\Delta\Gres\,\pi(\Kf)= {\Gres}$.
  \end{enumerate}
\end{cor}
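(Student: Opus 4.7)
The plan is to follow the strategy of the proof of Theorem~\ref{thm:main-dynamic}, but since the norm surjectivity hypothesis \eqref{eq:assumption compactopen} is no longer available, to test the required convergence against the (finitely many) $\compactopenf$-invariant characters of $\Gres$, whose torus integrals have been computed explicitly in Proposition~\ref{proptorusintegralchar}.

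First I would reduce to the character spectrum. Theorem~\ref{thm:invariance in product} applies under the splitting hypotheses at $q_1,q_2$, so any weak${}^\ast$-limit $\mu$ of the normalized Haar measures on the packets $[\torus_n\rmg_n]$ is a $\G(\Aa)^+$-invariant probability measure. For a $\compactopenf$-invariant $f\in \mcC_{u}([\G])$ I would write $f = \projchar{f} + (f-\projchar{f})$ as in \S\ref{sec:simplyconnectedinvariance}: by Lemma~\ref{lem:continuityofprojection}, $\projchar{f}$ is both $\G(\Aa)^+$- and $\compactopenf$-invariant, and Lemma~\ref{lem:Thm for orth complement} handles the orthogonal piece against $\mu$ and against $m$. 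Thus equidistribution on $[\G]_{\compactopenf}$ is equivalent to the convergence $\int_{[\torus_n\rmg_n]}h\to \int_{[\G]}h$ for all $\pi(\compactopenf)$-invariant continuous functions $h$ on $\Gres$. Since $\Gres$ is a compact abelian group by Lemma~\ref{lem:normimage} (see \eqref{eq:eliminating pm1 issue}) and $\pi(\compactopenf)$ is open, $\Gres/\pi(\compactopenf)$ is finite and such $h$ are finite linear combinations of $\compactopenf$-invariant characters of $\Gres$; it therefore suffices to test on these.

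Next I would evaluate the torus integrals $\int_{[\torus_n\rmg_n]}\chi = \chi(\rmg_n)\int_{[\torus_n]}\chi$ for each $\compactopenf$-invariant character $\chi = \prod_i \chi_i \circ \Nr_i$ of $\Gres$ via Proposition~\ref{proptorusintegralchar}. When $\Pi(\chi_i)_i\not\equiv 1$, the attached field $\field_{\Pi(\chi_i)_i}$ lies in $\mscK(\compactopenf)$, so by the first hypothesis $\field_n\neq \field_{\Pi(\chi_i)_i}$ for all $n$, hence the torus integral vanishes, matching $\int_{[\G]}\chi=0$. When $\Pi(\chi_i)_i\equiv 1$, the torus integral equals $1$ for every $n$, so $\int_{[\torus_n\rmg_n]}\chi = \chi(\rmg_n)$ has modulus $1$, whereas $\int_{[\G]}\chi=0$ unless $\chi$ is trivial. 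Consequently the equidistribution statement (1) is equivalent to the non-existence of a non-trivial $\compactopenf$-invariant character $\chi$ of $\Gres$ with $\Pi(\chi_i)_i\equiv 1$.

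To close the argument I would identify the last condition with (2) via Pontryagin duality on the compact abelian group $\Gres$. A character $\chi$ of $\Gres$ is $\compactopenf$-invariant precisely when it is trivial on $\pi(\compactopenf)$, and it satisfies $\Pi(\chi_i)_i\equiv 1$ precisely when it is trivial on $\Delta\Gres$. Therefore the absence of a non-trivial character of $\Gres$ trivial on both $\pi(\compactopenf)$ and $\Delta\Gres$ is exactly the assertion that $\Delta\Gres\,\pi(\compactopenf) = \Gres$, giving (1)$\Leftrightarrow$(2). No step is genuinely hard: the deep input (Theorem~\ref{thm:invariance in product}) is already used, Proposition~\ref{proptorusintegralchar} supplies the torus integrals, and the remainder is a formal Pontryagin duality argument on $\Gres$; the only mildly delicate point is observing that $\Gres/\pi(\compactopenf)$ is finite so that density is automatic.
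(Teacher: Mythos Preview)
Your proposal is correct and follows essentially the same approach as the paper's proof: reduce to the character spectrum via Theorem~\ref{thm:invariance in product} and Lemma~\ref{lem:Thm for orth complement}, compute the torus integrals with Proposition~\ref{proptorusintegralchar} (using the non-exceptionality of the $\field_n$ to discard the case $\Pi(\chi_i)_i\not\equiv 1$), and then identify the remaining obstruction with condition~(2) by Pontryagin duality on $\Gres$. Your write-up is somewhat more detailed than the paper's terse version, but the logical structure is identical.
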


\proof By Theorem \ref{thm:invariance in product}, Lemma~\ref{lem:Thm for orth complement},
Proposition \ref{proptorusintegralchar} and our assumption that the fields $\field_n$ are never exceptional, equidistribution  in $[\G]_{\compactopenf}$ is equivalent to the non-existence of a character $\chi$ of $\Gres$ which is non-trivial, $\Kf$-invariant and such that $\Pi(\chi_i)_i$ is trivial. But $\Pi(\chi_i)_i$ being trivial is equivalent to the tuple $(\chi_i\circ \Nr_i)_i$ being constant equal to $1$ along the diagonal subgroup $\Delta\Gres$ and $\chi$ being $\Kf$-invariant is equivalent to $(\chi_i)_i$ being $\pi(\Kf)$-invariant (hence constant equal to $1$ along $\pi(\Kf)$).
\qed




\bibliographystyle{amsalpha}

\end{document}
